\documentclass[11pt]{article}
\usepackage[colorlinks=false]{hyperref}
\usepackage{amsfonts, amsmath, amssymb, amsthm, fullpage, enumitem, tikz, bm, mathtools, tabularx, xparse}
\usepackage[noabbrev, capitalize]{cleveref}

\definecolor{litegray}{RGB}{192,192,192}
\definecolor{red}{RGB}{234,67,53}
\definecolor{green}{RGB}{52,168,83}
\definecolor{blue}{RGB}{66,133,244}

\usetikzlibrary{decorations.pathreplacing, calc}
\tikzstyle{vertex}=[circle, draw, fill=litegray, inner sep=0pt, minimum width=4pt]
\tikzstyle{circ}=[circle, draw, fill=litegray, inner sep=1pt]
\tikzstyle{braket}=[decorate,decoration={brace,amplitude=10pt},xshift=0pt,yshift=-10pt, black]

\makeatletter
\NewDocumentCommand{\wlabel}{O{theoremext}m+m}{\begingroup
  \cref@constructprefix{#1}{\cref@result}%
  \protected@edef\@currentlabel{#3}%
  \protected@edef\@currentlabelname{#3}%
  \protected@edef\cref@currentlabel{[#1][][\cref@result]#3}
  \label[#1]{#2}\endgroup}
\makeatother

\linespread{1.2}

\title{Forbidden induced subgraphs for graphs and signed graphs\\with eigenvalues bounded from below}
\author{
    Zilin Jiang\thanks{School of Mathematical and Statistical Sciences, and School of Computing and Augmented Intelligence, Arizona State University, Tempe, AZ 85281, USA. Email: {\tt zilinj@asu.edu}. Supported in part by an AMS Simons Travel Grant, and by U.S.\ taxpayers through NSF grant DMS-2127650.}
    \and Alexandr Polyanskii\thanks{Department of Mathematics, Emory University, Atlanta, GA 30322, USA. Email: {\tt apolian@emory.edu}. Supported in part by U.S.\ taxpayers through NSF grant DMS-2349045.}
}
\date{}

\newtheorem{theorem}{Theorem}[section]
\newtheorem{corollary}[theorem]{Corollary}
\newtheorem{lemma}[theorem]{Lemma}
\newtheorem{proposition}[theorem]{Proposition}
\newtheorem{conjecture}[theorem]{Conjecture}
\newtheorem{problem}[theorem]{Problem}

\theoremstyle{definition}
\newtheorem{definition}[theorem]{Definition}

\theoremstyle{remark}
\newtheorem*{remark}{Remark}
\newtheorem*{claim*}{Claim}

\newenvironment{claimproof}[1][Proof]{\begin{proof}[#1]}{\end{proof}}


\DeclareMathOperator{\rank}{rank}
\DeclarePairedDelimiter\abs{\lvert}{\rvert}

\newcommand{\dset}[2]{\left\{{#1}\colon{#2}\right\}}
\newcommand{\sset}[1]{\left\{{#1}\right\}}

\newcommand{\al}{\alpha}
\newcommand{\be}{\beta}
\newcommand{\eps}{\varepsilon}
\newcommand{\la}{\lambda}
\newcommand{\La}{\Lambda}

\newcommand{\kpl}{k_p(\la)}

\newcommand{\D}{\mathcal{D}}
\newcommand{\F}{\mathcal{F}}
\newcommand{\G}{\mathcal{G}}
\newcommand{\HH}{\mathcal{H}}

\newcommand{\N}{\mathbb{N}}
\newcommand{\R}{\mathbb{R}}
\newcommand{\X}{\mathcal{X}}

\newcommand{\Gp}{\G'(\la)}
\newcommand{\Gl}{\G(\la)}
\newcommand{\Gs}{\G^\pm(\la)}
\newcommand{\Gsp}{\G^\mp(\la)}
\newcommand{\Nabd}{N_{\al,\be}(d)}

\newcommand{\mult}{\operatorname{mult}}

\newcommand{\ba}{\bm{a}}
\newcommand{\tba}{\tilde{\ba}}
\newcommand{\bb}{\bm{b}}
\newcommand{\bx}{\bm{x}}
\newcommand{\tx}{\tilde{x}}
\newcommand{\tbx}{\tilde{\bx}}

\begin{document}

\maketitle

\begin{abstract}
	The smallest eigenvalue of a graph is the smallest eigenvalue of its adjacency matrix. We show that the family of graphs with smallest eigenvalue at least $-\lambda$ can be defined by a finite set of forbidden induced subgraphs if and only if $\lambda < \lambda^*$, where $\lambda^* = \rho^{1/2} + \rho^{-1/2} \approx 2.01980$, and $\rho$ is the unique real root of $x^3 = x + 1$. This resolves a question raised by Bussemaker and Neumaier. As a byproduct, we find all the limit points of smallest eigenvalues of graphs, supplementing Hoffman's work on those limit points in $[-2, \infty)$.
    
    We also prove that the same conclusion about forbidden subgraph characterization holds for signed graphs. Our impetus for the study of signed graphs is to determine the maximum cardinality of a spherical two-distance set with two fixed angles (one acute and one obtuse) in high dimensions. Denote by $N_{\alpha, \beta}(d)$ the maximum number of unit vectors in $\mathbb{R}^d$ where all pairwise inner products lie in $\{\alpha, \beta\}$ with $-1 \le \beta < 0 \le \alpha < 1$. Very recently Jiang, Tidor, Yao, Zhang and Zhao determined the limit of $N_{\alpha, \beta}(d)/d$ as $d\to\infty$ when $\alpha + 2\beta < 0$ or $(1-\alpha)/(\alpha-\beta) \in \{1,\sqrt2,\sqrt3\}$, and they proposed a conjecture on the limit in terms of eigenvalue multiplicities of signed graphs. We establish their conjecture whenever $(1-\alpha)/(\alpha - \beta) < \lambda^*$.
\end{abstract}

\section{Introduction} \label{sec:intro}

A fundamental problem in spectral graph theory is the classification and characterization of graphs with bounded eigenvalues. When we talk about eigenvalues of a graph we always refer to its adjacency matrix. In this paper, we study the families of graphs with eigenvalues bounded from below. Let $\Gl$ be the family of graphs with smallest eigenvalue at least $-\la$. For the sake of comparison, we mention the family $\Gp$ of graphs with spectral radius (or, equivalently, largest eigenvalue) at most $\la$.

\begin{remark}
    Since we rarely work with subgraphs that are not induced, all subgraphs are induced throughout this paper. We refer to subgraphs that are not necessarily induced as \emph{general subgraphs}.
\end{remark}

The Cauchy interlacing theorem implies that both $\Gp$ and $\Gl$ are closed under taking subgraphs. It is a natural question to ask whether it is possible to define each of these families by a finite set of forbidden subgraphs.

\begin{definition}
    Given a family $\G$ of graphs that is closed under taking subgraphs, a family $\F$ of graphs is a \emph{forbidden subgraph characterization} of $\G$ if the family $\G$ consists exactly of graphs that do not contain any member of $\F$ as a subgraph, and a graph $F$ is a \emph{minimal forbidden subgraph} for $\G$ if $F$ itself is not in $\G$ but every proper subgraph of $F$ is in $\G$.
\end{definition}

Note that the most economical forbidden subgraph characterization of $\G$ consists precisely of the minimal forbidden subgraphs for $\G$. Thus the existence of a finite forbidden subgraph characterization of $\G$ is equivalent to the finiteness of the minimal forbidden subgraphs for $\G$. In 1992 Bussemaker and Neumaier made the following remark in \cite[p~599]{BN92}:

\begin{quote}
    It would be interesting to know the set of numbers $m$, $-m$ such that $\mathcal{G}_m^\#$ [the set of minimal forbidden subgraphs for $\G'(m)$] or $\mathcal{G}_{-m}^\#$ [the set of minimal forbidden subgraphs for $\G(m)$] are finite; however, these seem to be very difficult problems.
\end{quote}

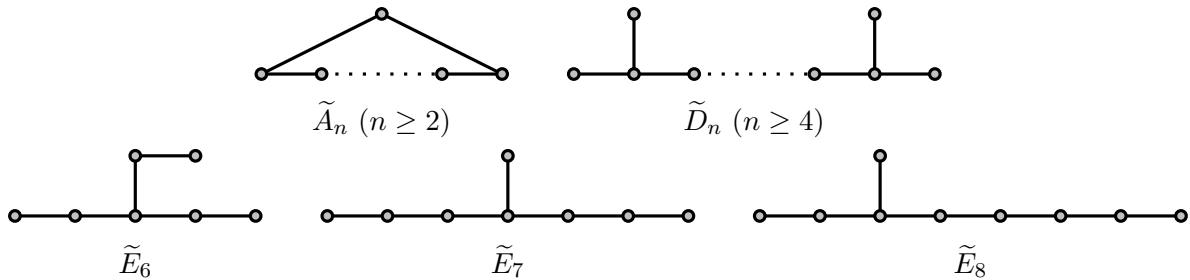
\begin{figure}
    \centering
    \begin{tikzpicture}[very thick, scale=0.4, baseline=(v.base)]
        \coordinate (v) at (0,0);
        \draw[black] (0,0) -- (4,2) node[vertex]{} -- (8,0);
        \draw[black, loosely dotted] (2,0) -- (6,0);
        \draw[black] (0,0) node[vertex]{} -- (2,0) node[vertex]{};
        \draw[black] (6,0) node[vertex]{} -- (8,0) node[vertex]{};
        \node at (4,-1.5) {$\widetilde{A}_n$ ($n \ge 2$)};
    \end{tikzpicture}\qquad
    \begin{tikzpicture}[very thick, scale=0.4, baseline=(v.base)]
        \coordinate (v) at (0,0);
        \draw[black] (2,0) -- (2,2) node[vertex]{};
        \draw[black] (10,0) -- (10,2) node[vertex]{};
        \draw[black, loosely dotted] (4,0) -- (8,0);
        \draw[black] (0,0) node[vertex]{} -- (2,0) node[vertex]{} -- (4,0) node[vertex]{};
        \draw[black] (8,0) node[vertex]{} -- (10,0) node[vertex]{} -- (12,0) node[vertex]{};
        \node at (6,-1.5) {$\widetilde{D}_n$ ($n \ge 4$)};
    \end{tikzpicture}\\
    \begin{tikzpicture}[very thick, scale=0.4, baseline=(v.base)]
        \coordinate (v) at (0,0);
        \draw[black] (4,0) -- (4,2) node[vertex]{} -- (6,2) node[vertex]{};
        \draw[black] (0,0) node[vertex]{} -- (2,0) node[vertex]{} -- (4,0) node[vertex]{} -- (6,0) node[vertex]{} -- (8,0) node[vertex]{};
        \node at (4,-1.5) {$\widetilde{E}_6$};
    \end{tikzpicture}\qquad
    \begin{tikzpicture}[very thick, scale=0.4, baseline=(v.base)]
        \coordinate (v) at (0,0);
        \draw[black] (6,0) -- (6,2) node[vertex]{};
        \draw[black] (0,0) node[vertex]{} -- (2,0) node[vertex]{} -- (4,0) node[vertex]{} -- (6,0) node[vertex]{} -- (8,0) node[vertex]{} -- (10,0) node[vertex]{} -- (12,0) node[vertex]{};
        \node at (6,-1.5) {$\widetilde{E}_7$};
    \end{tikzpicture}\qquad
    \begin{tikzpicture}[very thick, scale=0.4, baseline=(v.base)]
        \coordinate (v) at (0,0);
        \draw[black] (4,0) -- (4,2) node[vertex]{};
        \draw[black] (0,0) node[vertex]{} -- (2,0) node[vertex]{} -- (4,0) node[vertex]{} -- (6,0) node[vertex]{} -- (8,0) node[vertex]{} -- (10,0) node[vertex]{} -- (12,0) node[vertex]{} -- (14,0) node[vertex]{};
        \node at (7,-1.5) {$\widetilde{E}_8$};
    \end{tikzpicture}
    \caption{Maximal connected graphs with spectral radius at most $2$. The number of vertices is one more than the given index. In particular, $\widetilde{D}_4$ is actually a star with four leaves.} \label{fig:dynkin}
\end{figure}

The specific families $\G'(2)$ and $\G(2)$ are well understood. One of the earliest results dates back to 1970 when Smith~\cite{S70} determined all the connected graphs in $\G'(2)$ --- they are general subgraphs of the extended Dynkin diagrams in \cref{fig:dynkin}. The family $\G(2)$ is much richer and more complex --- it contains not only all the graphs in $\G'(2)$, but also all the line graphs.\footnote{A line graph $L(H)$ of a graph $H$ is obtained by creating a vertex per edge in $H$, and connecting two vertices if and only if the corresponding edges in $H$ have a vertex in common. The adjacency matrix of $L(H)$ can be written as $B^TB - 2I$, where $B$ is the vertex-edge incidence matrix of $H$, hence the smallest eigenvalue of $L(H)$ is at least $-2$.\label{footnote:line-graph}} The classification of $\G(2)$ culminated in a beautiful theorem of Cameron, Goethals, Seidel, and Shult~\cite{CGSS76} who related $\G(2)$ to root systems that occur in the classification of semisimple Lie algebras (see \cref{thm:cgss}). We refer the reader to the monograph \cite{CRS04} for a comprehensive account of $\G(2)$.

These classification theorems can be used to establish quantitative answers to Bussemaker and Neumaier's problems. For $\G'(2)$, Cvetkovi\'{c}, Doob and Gutman~\cite[Theorem~2.8]{CDG82} determined that there are $18$ minimal forbidden subgraphs. For $\G(2)$, Rao, Singhi and Vijayan~\cite[Theorem~4.1]{RSV81} observed that the number of vertices in any minimal forbidden subgraph is at most $37$, which was eventually perfected to $10$ by Kumar, Rao and Singhi~\cite{KRS82}. A computer search by Bussemaker and Neumaier~\cite[p~596]{BN92} established that there are, in total, $1812$ minimal forbidden subgraphs for $\G(2)$.

In a recent work, the authors of the current paper resolved the first problem of Bussemaker and Neumaier.\footnote{The original statement of \cite[Theorem~1]{JP20} determines the set of $\la$ for which the subfamily of connected graphs in $\Gp$ (or equivalently, the entire family $\G'(\la)$) can be defined by a finite set of forbidden \emph{general} subgraphs. Using the well-known fact that the spectral radius of $G_1$ is at most that of $G_2$ whenever $G_1$ is a general subgraph of $G_2$, one can show that if $\G$ is a forbidden general subgraph characterization for $\Gp$, then the family $\dset{H}{\exists\, G \in \G \text{ s.t. }G\text{ is a general subgraph of }H\text{ on the same vertex set}}$ is a forbidden (induced) subgraph characterization for $\Gp$, and so the original statement of \cite[Theorem~1]{JP20} is equivalent to \cref{thm:forb1}.}

\begin{theorem}[Theorem~1 of Jiang and Polyanskii~\cite{JP20}] \label{thm:forb1}
    For every integer $m \ge 2$, let $\be_m$ be the largest root of $x^{m+1} = 1 + x + \dots + x^{m-1}$, and let $\al_m := \be_m^{1/2} + \be_m^{-1/2}$. The family $\Gp$ of graphs with spectral radius at most $\la$ has a finite forbidden subgraph characterization if and only if $\la < \la'$ and $\la \not\in\sset{\al_2, \al_3, \dots}$, where
    \[
        \la' := \lim_{m\to\infty} \al_m = \varphi^{1/2} + \varphi^{-1/2} = \sqrt{2 + \sqrt5} \approx 2.05817,
    \]
    and $\varphi$ is the golden ratio $(1+\sqrt5)/2$.
\end{theorem}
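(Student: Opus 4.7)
The plan is to prove the characterization in two parts: the necessity of the condition (if $\la \ge \la'$ or $\la = \al_m$ for some $m \ge 2$, then there are infinitely many minimal forbidden subgraphs), and the sufficiency (for every other $\la < \la'$, there are only finitely many). The main external inputs I would use are Hoffman's classical analysis of the limit points of graph spectral radii in $[0, \la')$, together with Shearer's theorem that spectral radii are dense in $[\la', \infty)$.

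For necessity, I would first handle $\la \ge \la'$: Shearer's density theorem produces, for every $\eps > 0$, graphs whose spectral radii lie in $(\la, \la + \eps)$. Discretizing $\eps$, one obtains infinitely many distinct spectral radii just above $\la$, each realized by some graph, and extracting a minimal forbidden subgraph inside each such graph yields infinitely many pairwise distinct minimal forbidden subgraphs (distinguished by their spectral radii). For $\la = \al_m$, the approach is more delicate: I would explicitly construct a sequence of trees $T_k$ (caterpillars consisting of a long spine together with a carefully placed short pendant, mimicking a finite truncation of an infinite graph whose spectral radius equals $\al_m$). I would verify that $\rho(T_k) > \al_m$, decreases monotonically to $\al_m$, and that for every vertex $v$ of $T_k$, $\rho(T_k - v) \le \al_m$. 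The unbounded size of $T_k$ would then force infinitely many distinct minimal forbidden subgraphs.

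For sufficiency, assume $\la < \la'$ and $\la \ne \al_m$ for every $m \ge 2$. By Hoffman's classification, $\la$ is not a limit point of the set $S$ of graph spectral radii, so there exist $\delta, \eps > 0$ with $S \cap (\la, \la + \eps) \subseteq [\la + \delta, \la + \eps]$. Suppose for contradiction that there are infinitely many minimal forbidden subgraphs $F_1, F_2, \dots$; since only finitely many graphs have any given vertex count, we may assume $\abs{V(F_i)} \to \infty$. Let $\bx$ be the unit Perron eigenvector of $F_i$ and pick $v \in V(F_i)$ minimizing $x_v$, so $x_v^2 \le 1/\abs{V(F_i)}$ by averaging. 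A standard Rayleigh-quotient estimate applied to the restriction of $\bx$ to $V(F_i) \setminus \{v\}$ yields $\rho(F_i) - \rho(F_i - v) \le \rho(F_i) \cdot x_v^2 / (1 - x_v^2) = O(1/\abs{V(F_i)})$. Minimality forces $\rho(F_i - v) \le \la$, so $\rho(F_i) \to \la^+$, contradicting $\rho(F_i) \ge \la + \delta$ for $i$ large.

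The main obstacle is the necessity case $\la = \al_m$: explicitly building an infinite antichain of minimal forbidden subgraphs and verifying minimality. This hinges on a precise understanding of how $\al_m$ arises as the spectral radius of a specific infinite limiting graph, and on identifying finite truncations whose spectral radii approach $\al_m$ only from above while every vertex deletion knocks the spectral radius strictly below $\al_m$. A naive choice of tree would fail minimality, so the construction must be tuned carefully to the combinatorics of the eigenequation near $\al_m$; I would likely argue minimality by leveraging monotonicity of characteristic polynomials along path subdivisions to reduce to a finite case analysis at each end of the caterpillar.
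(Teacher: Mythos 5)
This theorem is quoted from \cite{JP20}; the present paper does not prove it (only a footnote relating the induced and general subgraph formulations), so your argument has to stand on its own. Your sufficiency argument is essentially sound: the Rayleigh estimate $\la^1(F)-\la^1(F-v)\le \la^1(F)\,x_v^2/(1-x_v^2)$ is correct, and applied to the vertex of minimal Perron weight it forces minimal forbidden subgraphs of unbounded order to have largest eigenvalue tending to $\la$ from above, so finiteness follows once the set $S$ of spectral radii has a one-sided gap just above $\la$. Two caveats on the inputs: at $\la=2$ your assertion that ``$\la$ is not a limit point of $S$'' is false ($2$ is a limit point from below, via paths); what you actually need is the right-sided gap above $2$, which is true but rests on Smith/Cvetkovi\'c--Doob--Gutman-type facts rather than on Hoffman's list of limit points alone. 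Likewise, in the $\la\ge\la'$ case the extracted minimal forbidden subgraphs are not ``distinguished by their spectral radii'' matching the host graphs; the correct (and easy) repair is that their spectral radii lie in $(\la,\la^1(G_i)]$ and hence accumulate at $\la$, which already forces infinitely many of them (this is exactly the mechanism of \cref{lem:limit-points} in this paper).

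The genuine gap is the necessity case $\la=\al_m$, which you yourself flag as unresolved. The family you sketch --- a long spine with one carefully placed short pendant --- is precisely the family $E_{m,n}$, whose largest eigenvalue increases to $\al_m$ \emph{from below} (see \cref{lem:e2n} for $m=2$); such graphs lie in $\G'(\al_m)$, so no tuning of a single-pendant caterpillar can produce forbidden subgraphs, let alone minimal ones. What is needed is a family of graphs with largest eigenvalue strictly greater than $\al_m$ and converging to $\al_m$, for instance paths with a pendant vertex at distance $m$ from \emph{each} end, where strict decrease under lengthening the internal path comes from the Hoffman--Smith subdivision lemma and the limit must be identified as $\al_m$; once one has such a family, minimality need not be verified at all, since the same extraction argument as in the $\la\ge\la'$ case (a right-sided accumulation point of spectral radii admits no finite characterization) concludes. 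As written, this construction and its spectral analysis are absent, so the case $\la=\al_m$ is not established.
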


The first main theorem of this paper resolves the remaining problem of Bussemaker and Neumaier --- $\Gl$ enjoys a straightforward threshold phenomenon.

\begin{theorem} \label{thm:main1}
    The family $\Gl$ of graphs with smallest eigenvalue at least $-\la$ has a finite forbidden subgraph characterization if and only if $\la < \la^*$, where
    \[
        \la^* := \rho^{1/2} + \rho^{-1/2} \approx 2.01980,
    \]
    and $\rho$ is the unique real root of $x^3 = x + 1$.
\end{theorem}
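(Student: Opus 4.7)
The proof splits into two directions. The key observation is that $\la^* = \al_2$ in the notation of \cref{thm:forb1}, which I expect to exploit by converting smallest-eigenvalue problems to spectral-radius problems via bipartite double covers and piggybacking on that theorem.

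For the easier direction $(\la \ge \la^*)$, I would exhibit an explicit infinite family of pairwise non-isomorphic minimal forbidden subgraphs for $\G(\la)$. The idea is to construct a sequence $(H_n)$ of connected bipartite graphs with $\abs{V(H_n)} \to \infty$ whose spectral radii decrease strictly to $\al_2 = \la^*$; natural candidates are ``caterpillar-like'' trees whose characteristic polynomials are governed by the recurrence $x^3 = 1 + x$ defining $\be$. Bipartiteness forces $\la_{\min}(H_n) = -\rho(H_n) < -\la$, so $H_n \notin \G(\la)$, and after arranging (e.g.\ by checking that every vertex-deleted subgraph already lies in $\G(\la)$) that each $H_n$ is itself minimal forbidden, we obtain the desired infinite collection.

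For the harder direction $(\la < \la^*)$, I would argue by contradiction. Assume infinitely many minimal forbidden subgraphs $F_1, F_2, \dots$ with $\abs{V(F_n)} \to \infty$. By minimality and Cauchy interlacing, the second-smallest eigenvalue of each $F_n$ is $\ge -\la$, so $\la_{\min}(F_n)$ is a simple eigenvalue with a unit eigenvector having no zero entries; after passing to a subsequence, $\la_{\min}(F_n) \to -\mu$ for some $\mu \ge \la$. Using the nonzero-entry rigidity to bound local degrees, I would extract a rooted Benjamini--Schramm-style limit $F_\infty$, an infinite connected graph carrying an $\ell^2$-eigenvector of eigenvalue $-\mu$. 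Taking the bipartite double cover of $F_\infty$ and approximating it by finite connected bipartite graphs realizes $\mu$ as a limit point of spectral radii of finite connected graphs. The limit-point analysis underlying \cref{thm:forb1} then shows the smallest such limit point above $2$ is $\al_2 = \la^*$, forcing $\mu \ge \la^*$; a further rigidity argument at the threshold contradicts the assumption $\la < \la^*$.

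The main obstacle lies in the hard direction. Two delicate steps stand out: promoting the nonzero-entry rigidity of the smallest eigenvector into a bound on maximum degree, so that a rooted local limit genuinely exists; and transferring spectral information from $F_\infty$ (which may be non-bipartite) to a sequence of finite bipartite graphs whose spectral radii approach $\mu$, without losing the relevant eigenvalue multiplicity along the way. A companion classification of the limit points of smallest eigenvalues of finite graphs --- extending Hoffman's work in $[-2, \infty)$ to below $-2$ and identifying $-\la^*$ as the largest limit point in $(-\infty, -2)$, as promised in the abstract --- is likely proved in tandem with this argument and supplies the clean cutoff at $-\la^*$.
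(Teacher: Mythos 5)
Your plan has genuine gaps in both directions, and its central device --- reducing smallest-eigenvalue questions to spectral-radius questions via bipartite double covers --- does not work. The spectrum of the double cover of $G$ is $\{\pm\la\colon \la \in \operatorname{spec}(G)\}$, so its spectral radius is controlled by the \emph{largest} eigenvalue of $G$, not the smallest: $K_n$ has smallest eigenvalue $-1$, yet its double cover has spectral radius $n-1$. Likewise, finite pieces of your limit object $F_\infty$ have spectral radii tending to the operator norm of its adjacency operator, not to $\mu$, so you never realize $\mu$ as a limit point of spectral radii, and you cannot piggyback on \cref{thm:forb1}. This is exactly why $\Gl$ is much richer than $\G'(\la)$ (line graphs, generalized line graphs, root-system representations), and the paper's proof for $\la<\la^*$ is structural instead: for $\la<2$ it forbids extension families of the claw and diamond, reduces large connected graphs to line graphs of bounded-width rooted trees, and gets finiteness of the minimal forbidden subgraphs from Dickson's lemma (\cref{lem:dickson}); for $\la\in[2,\la^*)$ it proves that $\Gl\setminus\G(2)$ contains only finitely many connected graphs (\cref{thm:main1-2}) via the $31$ minimal forbidden subgraphs for generalized line graphs and computer-assisted eigenvalue estimates of their path/clique extensions. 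Two further specific problems with your sketch: the degree bound needed for a local limit is false in general (vertices of huge degree with clique neighborhoods are compatible with smallest eigenvalue $\ge -\la$, which is why clique extensions must be handled explicitly), and even granting $\mu\ge\la^*$ there is no contradiction with $\la<\la^*$ --- minimal forbidden subgraphs can have smallest eigenvalues far below $-\la^*$ --- so the ``further rigidity argument'' you defer is the actual heart of the proof. Note also that absence of limit points alone would not bound the number of minimal forbidden subgraphs; an antichain-type argument such as Dickson's lemma is still needed for $\la<2$.

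In the direction $\la\ge\la^*$, your family of bipartite graphs with spectral radii decreasing to $\la^*$ can only produce forbidden subgraphs for $\la=\la^*$: for any fixed $\la>\la^*$ all but finitely many of these graphs satisfy $\la_1(H_n)=-\rho(H_n)>-\la$, hence lie in $\Gl$ and are useless. What is actually needed is that for \emph{every} $\la\ge\la^*$ the smallest eigenvalues of graphs accumulate at $-\la$ from below (then any finite candidate family is beaten by a graph whose smallest eigenvalue lies between $\max_i\la_1(F_i)$ and $-\la$). For $\la\ge\la'$ this follows from Shearer's density theorem (\cref{thm:shearer}), but on $(\la^*,\la')$ it is precisely the new content of \cref{thm:main1-3}, proved with the rowing-graph construction; indeed Doob had conjectured that the only limit points in $(2,\la')$ are $\al_2,\al_3,\dots$, so the statement you describe as ``likely proved in tandem'' (\cref{cor:main1}) is not available as an input --- in the paper it is a consequence of this very construction, not a prerequisite.
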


\begin{remark}
    The constants defined in \cref{thm:forb1,thm:main1} satisfy $2 < \la^* = \al_2 < \al_3 < \dots < \la'$. The constant $\rho$ defined in \cref{thm:main1}, known as the \emph{plastic number}, is the smallest Pisot--Vijayaraghavan number.\footnote{The exact value of $\rho$ is $\sqrt[3]{(9+\sqrt{69})/18} + \sqrt[3]{(9-\sqrt{69})/18}$.}
\end{remark}

As a byproduct, we determine all the limit points of the set of smallest eigenvalues of graphs. Let $\La_1$ consist of $\la \in \R$ such that $-\la$ is the smallest eigenvalue of some graph. Here we invert the smallest eigenvalues of graphs similarly to how we define $\Gl$. Before our work, Hoffman~\cite{H77a} characterized all the limit points of $\La_1$ in $(-\infty, 2]$. His result involves a technical qualification, which was conjectured to be always true, and was later established by Greaves et al.~\cite[Theorem~1]{GKMST15}. Doob~\cite[Theorem~9]{D89} observed that every real number in $\sset{\al_2, \al_3, \al_4, \dots} \cup [\lambda',\infty)$ is a limit point of $\La_1$, and conjectured that $\alpha_2$ (which equals $\la^*$), $\al_3, \al_4 \dots$ are the only limit points of $\La_1$ in $(2, \la')$. We refute this conjecture by finding all the limit points of $\La_1$ in $(2, \la')$.

\begin{corollary} \label{cor:main1}
    For every $\la > 2$, the negative number $-\la$ is a limit point of the set of smallest eigenvalues of graphs if and only if $\la \ge \la^*$.
\end{corollary}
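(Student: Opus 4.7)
I split the proof into the two implications, both building on Theorem~\ref{thm:main1}.

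For the direction $\la \ge \la^*$ implies $-\la$ is a limit point of $\La_1$: the case $\la \in \{\al_2, \al_3, \ldots\} \cup [\la', \infty)$ is already Doob's result \cite{D89}, so it remains to treat $\la \in (\la^*, \la') \setminus \{\al_3, \al_4, \ldots\}$. For each such target, I would extract a parameterized subfamily from the (infinite, by Theorem~\ref{thm:main1}) minimal forbidden subgraph set of $\G(\la^*)$ whose smallest eigenvalues vary continuously in a structural parameter and collectively fill $(-\la', -\la^*)$. Tuning the parameter then yields a sequence of graphs whose smallest eigenvalues approach $-\la$.

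For the direction $\la < \la^*$ implies $-\la$ is not a limit point: fix $\la \in (2, \la^*)$. By Theorem~\ref{thm:main1}, $\G(\la)$ has a finite minimal forbidden set $\{F_1, \ldots, F_k\}$. Let $\nu := \min_i \abs{\la_{\min}(F_i)} > \la$. A direct Cauchy interlacing argument shows $\La_1 \cap (\la, \nu) = \emptyset$: any graph $G$ realizing $\la_{\min}(G) = -\mu$ with $\mu \in (\la, \nu)$ would not lie in $\G(\la)$ and hence would contain some $F_i$ as an induced subgraph, whence $-\mu \le \la_{\min}(F_i)$ forces $\mu \ge \abs{\la_{\min}(F_i)} \ge \nu$, a contradiction. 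This precludes accumulation from above.

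To preclude accumulation from below, suppose toward contradiction that distinct $\mu_n \in \La_1$ satisfy $\mu_n \to \la^-$, realized by graphs $G_n$ with $\la_{\min}(G_n) = -\mu_n$. Replace each $G_n$ by a minimal induced subgraph $H_n$ that still satisfies $\la_{\min}(H_n) = -\mu_n$; then each $H_n$ is \emph{critical} in the sense that every proper induced subgraph has strictly larger smallest eigenvalue. Each $H_n$ lies in $\G(\la)$ since $-\mu_n > -\la$. The main obstacle is to show that there cannot be infinitely many critical graphs in $\G(\la)$ with $\abs{\la_{\min}}$ arbitrarily close to $\la$ from below. I would resolve this by invoking the structural content of the proof of Theorem~\ref{thm:main1}: for $\la < \la^*$, graphs in $\G(\la)$ admit representations as configurations of integer vectors in a lattice of bounded rank, and a combinatorial analysis in this lattice shows that critical graphs with smallest eigenvalue in any compact subset of $(-\la^*, -2]$ form a finite set. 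With this ingredient in hand, the $H_n$ take only finitely many isomorphism types, contradicting the distinctness of the $\mu_n$ and completing the proof.
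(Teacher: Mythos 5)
Your proposal has genuine gaps in both directions. For the direction $\la \ge \la^*$, the entire burden of the statement lies in the range $\la \in (\la^*, \la')$, and there you offer only the hope that one can ``extract a parameterized subfamily'' of the minimal forbidden subgraphs of $\G(\la^*)$ whose smallest eigenvalues ``fill $(-\la', -\la^*)$.'' Nothing in \cref{thm:main1} guarantees that the smallest eigenvalues of these minimal forbidden subgraphs approach any prescribed value in $(-\la',-\la^*)$, let alone vary ``continuously'' and densely; this is exactly the hard content of the paper, which proves the density statement \cref{thm:main1-3} via the rowing graphs of \cref{lem:rowing} together with a delicate well-founded-order argument. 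Indeed Doob conjectured that the only limit points in $(2,\la')$ are $\al_2,\al_3,\dots$, so density is the surprising part and cannot be waved in. There is also a circularity caution: in the paper, \cref{thm:main1} for $\la\ge\la^*$ is itself deduced from \cref{thm:main1-3} (via \cref{lem:limit-points}), so deducing the limit-point property back from \cref{thm:main1} requires an independent construction, which you do not supply.

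For the direction $\la \in (2,\la^*)$, your gap argument above $\la$ (take the finite forbidden family, set $\nu=\min_i\abs{\la_1(F_i)}>\la$, and use interlacing to show $\La_1\cap(\la,\nu)=\varnothing$) is correct; it is essentially the contrapositive of \cref{lem:limit-points}. But the accumulation-from-below case is where the real work sits, and your proposed resolution fails: graphs in $\Gl$ with $\la>2$ do \emph{not} admit representations by integer vectors in a lattice of bounded rank --- such representations (root systems $D_n$, $E_8$) are available only at smallest eigenvalue $-2$, and classifying connected graphs with smallest eigenvalue in $(-\la^*,-2)$ is explicitly posed as an open problem in \cref{sec:discussion}. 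Note also that finite forbidden characterizations alone cannot rule out accumulation from below: by \cref{lem:e2n}, $-\la_1(E_{2,n})$ increases to $\la^*$ even though every $\la<\la^*$ has a finite characterization, so your ``main obstacle'' genuinely requires the stronger structural theorem \cref{thm:main1-2} (only finitely many connected graphs in $\Gl\setminus\G(2)$ for $\la<\la^*$), which the paper proves through generalized line graphs, extension families, and computer-assisted eigenvalue bounds, and which you assert rather than prove. (As a further sign of trouble, your claimed finiteness of critical graphs with smallest eigenvalue in compact subsets of $(-\la^*,-2]$ is false at the endpoint $-2$: every even cycle is critical for the value $-2$.) The paper's own proof of this direction is a one-line consequence of \cref{thm:main1-2}; without that theorem (or an equivalent), your argument does not close.
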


We next turn our attention to \emph{signed graphs}, which are graphs whose edges are each labeled by $+$ or $-$. Throughout the paper we decorate variables for signed graphs with the $\pm$ superscript. When we talk about eigenvalues of a signed graph $G^\pm$ on $n$ vertices, we refer to its \emph{signed adjacency matrix} --- the $n \times n$ matrix whose $(i,j)$-th entry is $1$ if $ij$ is a positive edge, $-1$ if $ij$ is a negative edge, and $0$ otherwise.

It still makes sense to speak of forbidden subgraph characterization of a family of signed graphs with eigenvalues bounded from below. Our second main theorem establishes the same threshold phenomenon for the family of signed graphs with smallest eigenvalue at least $-\la$.

\begin{theorem} \label{thm:main2}
    The family $\Gs$ of signed graphs with smallest eigenvalue at least $-\la$ has a finite forbidden subgraph characterization if and only if $\la < \la^*$.
\end{theorem}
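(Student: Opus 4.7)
The proof follows the template of \cref{thm:main1} and splits into the two directions.

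\emph{Infinite family when $\la\ge\la^*$.} Given a minimal forbidden unsigned graph $F$ for $\Gl$ (supplied by \cref{thm:main1}), let $F^\pm$ be its all-positive signing. The signed adjacency matrix of $F^\pm$ equals the ordinary adjacency matrix of $F$, so the smallest eigenvalue of $F^\pm$ is $<-\la$ and $F^\pm\notin\Gs$. Every proper induced signed subgraph of $F^\pm$ is the all-positive signing of some proper induced subgraph $H$ of $F$; by the minimality of $F$, $H\in\Gl$, and its all-positive signing has the same adjacency matrix, hence lies in $\Gs$. Therefore $F^\pm$ is a minimal forbidden signed subgraph, and \cref{thm:main1} provides infinitely many such $F$ when $\la\ge\la^*$.

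\emph{Finiteness when $\la<\la^*$.} The task is to bound $|V(F^\pm)|$ uniformly over all minimal forbidden signed subgraphs $F^\pm$. Three ingredients are needed: (i) the signed analog of the Cameron--Goethals--Seidel--Shult theorem, namely that a signed graph $G^\pm$ has smallest eigenvalue at least $-2$ iff $A^\pm+2I$ is a Gram matrix of vectors of norm $\sqrt 2$ with pairwise inner products in $\{0,\pm1\}$, equivalently iff $G^\pm$ is built from a subset of the root system $D_n$ or $E_8$; (ii) finiteness of minimal forbidden signed subgraphs for $\G^\pm(2)$, a consequence of (i) combined with a rank/extremality argument paralleling Bussemaker--Neumaier; and (iii) the \emph{structural lemma}: for every $\la<\la^*$, the vertex counts of signed graphs with smallest eigenvalue in $(-\la^*,-2)$ are uniformly bounded. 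Given these, if $F^\pm$ is minimal forbidden then every $F^\pm-v$ has smallest eigenvalue at least $-\la>-\la^*$: if every $F^\pm-v$ lies in $\G^\pm(2)$ then $F^\pm$ is minimal forbidden for $\G^\pm(2)$ and (ii) applies, while otherwise some $F^\pm-v$ has smallest eigenvalue in $(-\la^*,-2)$ and (iii) forces $|V(F^\pm)|\le|V(F^\pm-v)|+1$.

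\emph{Main obstacle.} The structural lemma (iii) is the crux; it is the signed-graph counterpart of \cref{cor:main1}. I would prove it by mirroring the limit-point analysis used for \cref{cor:main1}: any sequence of signed graphs whose smallest eigenvalues approach some $-\la\in(-\la^*,-2)$ should, after signed switching to normalize signs along a spanning tree and a Ramsey-type extraction to stabilize the remaining signs, produce a limiting vector configuration whose spectral parameters are governed by the cubic $x^3=x+1$ defining $\la^*$, yielding a contradiction. Uniformity across sign patterns is the main technical hurdle; once it is overcome, the remainder of the argument is routine.
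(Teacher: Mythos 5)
The ``only if'' direction of your proposal is fine and essentially matches the paper (the paper restricts a hypothetical finite characterization of $\Gs$ to its all-positive members; you push forward the infinitely many minimal forbidden subgraphs from \cref{thm:main1} — both reduce to \cref{thm:main1} for $\la \ge \la^*$). The problem is the ``if'' direction. Your ingredient (iii), which you yourself identify as the crux, is false as stated: by \cref{lem:e2n} the (all-positive) graphs $E_{2,n}$ have smallest eigenvalue strictly between $-\la^*$ and $-2$ for all large $n$, so the vertex counts of connected signed graphs with smallest eigenvalue in the open interval $(-\la^*,-2)$ are \emph{not} uniformly bounded. The true statement (\cref{thm:main2-2}) is that for each fixed $\la\in[2,\la^*)$ there are only finitely many \emph{connected} signed graphs with smallest eigenvalue in $[-\la,-2)$; the bound necessarily depends on $\la$ and blows up as $\la\to\la^*$, and connectivity is essential (one can pad with large components from $\G^\pm(2)$). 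More seriously, your proposed route to (iii) inverts the paper's logic: \cref{cor:main1} is \emph{deduced from} the finiteness theorem \cref{thm:main1-2}, not proved by a compactness/``limiting configuration'' argument, and your sketch (switching normalization, Ramsey extraction, a limit object ``governed by $x^3=x+1$'') supplies no mechanism by which $\la^*$ would emerge. In the paper the threshold enters through a concrete computation: a sufficiently large connected signed graph avoiding the extension families of the $49$ minimal forbidden subgraphs for $\D_\infty^\pm$ is represented in $D_n$ (hence lies in $\G^\pm(2)$) by \cref{lem:forbid-extensions-signed} and the Chawathe--Vijayakumar characterization, and the computer-assisted \cref{lem:computation-2} shows that all path and clique extensions of those $49$ signed graphs have limiting smallest eigenvalue at most $-\la^*$, with equality exactly for the extensions of $G_{23}$ that are the graphs $E_{2,n}$. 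Without a substitute for this input, the claim that $\la^*$ is the right threshold is unsupported.

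Even granting a corrected (iii), your reduction only covers $\la\ge 2$. For $\la<2$ a minimal forbidden subgraph $F^\pm$ for $\Gs$ may have smallest eigenvalue in $[-2,-\la)$; then $F^\pm\in\G^\pm(2)$, so it is not forbidden for $\G^\pm(2)$, and no vertex-deleted subgraph lands in the window where (iii) applies — neither branch of your dichotomy bounds $\abs{F^\pm}$. The paper needs a separate, substantial argument for $\la<2$ (signed line graphs of bidirected trees via the extension-family/Ramsey lemma, plus Dickson's lemma to finish). Two smaller points: in the branch where some $F^\pm-v\notin\G^\pm(2)$ you must ensure $F^\pm-v$ is connected (the paper chooses $v$ farthest from an embedded forbidden subgraph for $\G^\pm(2)$), and your ingredient (ii) is a genuine theorem of Vijayakumar (minimal forbidden subgraphs for $\G^\pm(2)$ have at most $10$ vertices), which should be cited rather than dismissed as a routine rank argument.
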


Notice that if $\F^\pm$ is a finite forbidden subgraph characterization of $\Gs$, then the set of all-positive signed graphs in $\F^\pm$ is a finite forbidden subgraph characterization of $\Gl$. Therefore, for $\la < \la^*$, \cref{thm:main2} implies \cref{thm:main1}. However, in \cref{sec:forb}, we still provide the complete proof of \cref{thm:main1} to illustrate the key ideas.

Finally, we turn our attention to the largest eigenvalue of a signed graph. We denote the eigenvalues of a signed graph $G^\pm$ by $\la_1(G^\pm) \le \la_2(G^\pm) \le \dots $ in ascending order, and by $\la^1(G^\pm) \ge \la^2(G^\pm) \ge \dots$ in descending order. Observing that for every signed graph $G^\pm$, $\la_1(G^\pm) \ge -\la$ if and only if $\la^1(-G^\pm) \le \la$, where $-G^\pm$ reverses all the edge signs of $G^\pm$, we obtain an immediate consequence of \cref{thm:main2}.

\begin{corollary} \label{cor:main2}
    The family $\Gsp$ of signed graphs with largest eigenvalue at most $\la$ has a finite forbidden subgraph characterization if and only if $\la < \la^*$. \qed
\end{corollary}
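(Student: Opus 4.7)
The plan is to derive \cref{cor:main2} from \cref{thm:main2} via the edge-sign-negation involution $G^\pm \mapsto -G^\pm$ on signed graphs. The signed adjacency matrix of $-G^\pm$ is the negation of the signed adjacency matrix of $G^\pm$, so the spectrum of $-G^\pm$ is the negation of the spectrum of $G^\pm$; in particular, $\la^1(-G^\pm) = -\la_1(G^\pm)$, which gives the equivalence $\la_1(G^\pm) \ge -\la \iff \la^1(-G^\pm) \le \la$ already recorded in the excerpt. Consequently, $G^\pm \in \Gsp$ if and only if $-G^\pm \in \Gs$.

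Next I would note that the map $G^\pm \mapsto -G^\pm$ is an involution that preserves the induced subgraph relation: for signed graphs $H^\pm$ and $G^\pm$, the graph $H^\pm$ is an induced signed subgraph of $G^\pm$ if and only if $-H^\pm$ is an induced signed subgraph of $-G^\pm$. Therefore, given any family $\F^\pm$ of signed graphs, writing $-\F^\pm := \dset{-F^\pm}{F^\pm \in \F^\pm}$, the family $\F^\pm$ is a forbidden subgraph characterization for $\Gs$ if and only if $-\F^\pm$ is a forbidden subgraph characterization for $\Gsp$; moreover, $\F^\pm$ is finite if and only if $-\F^\pm$ is finite.

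Combining these two observations, $\Gsp$ admits a finite forbidden subgraph characterization if and only if $\Gs$ does, and by \cref{thm:main2} this holds precisely when $\la < \la^*$. There is no real obstacle here: \cref{cor:main2} is a direct consequence of \cref{thm:main2} through the sign-flipping symmetry, which is exactly why the excerpt attaches a \texttt{\textbackslash qed} to the statement itself.
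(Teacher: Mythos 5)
Your proposal is correct and is exactly the paper's argument: the paper states \cref{cor:main2} as an immediate consequence of \cref{thm:main2} via the observation that $\la_1(G^\pm) \ge -\la$ if and only if $\la^1(-G^\pm) \le \la$, which is precisely the sign-negation involution you use. Your write-up merely makes explicit the (obvious) facts that this involution preserves the induced-subgraph relation and finiteness of forbidden families, so it matches the intended proof.
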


Our motivation to understand the forbidden subgraph characterization of $\Gsp$ comes from the problem of determining the maximum size of a \emph{spherical two-distance set} with two fixed angles (one acute and one obtuse) in high dimensions. For fixed $-1 \le \be < 0 \le \al < 1$, let $\Nabd$ denote the maximum number of unit vectors in $\R^d$ where all pairwise inner products lie in $\sset{\al,\be}$.

In the special case $\al = -\be$, which corresponds to \emph{equiangular lines}, recent work \cite{B16,BDSK18,JP20} culminated in a solution \cite{JTYZZ19} of Jiang, Tidor, Yao, Zhang and Zhao to the problem of determining $N_{\al, -\al}(d)$ for sufficiently large $d$. We refer the reader to \cite[Section~1]{JTYZZ19} for earlier developments on equiangular lines with fixed angles in high dimensions.

For the general case $-1 \le \be < 0 \le \al < 1$, in their subsequent work \cite{JTYZZ23}, Jiang et al.\ proposed a conjecture on the limit of $\Nabd/d$ as $d\to\infty$. To state their conjecture, we need the following spectral graph theoretic quantity.

\begin{definition}
    Given $\la > 0$ and $p \in \N$, define the quantity
    \[
        \kpl = \inf\dset{\frac{\abs{G^\pm}}{\mult(\la, G^\pm)}}{\chi(G^\pm) \le p \text{ and }\la^1(G^\pm) = \la},
    \]
    where $\abs{G^\pm}$ is the number of vertices of $G^\pm$, $\mult(\la, G^\pm)$ is the multiplicity of $\la$ as an eigenvalue of $G^\pm$, and $\chi(G^\pm)$ is the chromatic number of the signed graph $G^\pm$.
\end{definition}

We postpone the definition of $\chi(G^\pm)$, which takes values in $\N^+ \cup \sset{\infty}$, to \cref{sec:app} (see \cref{def:chromatic}). We now state the conjecture on $\Nabd$.

\begin{conjecture}[Conjecture~1.11 of Jiang et al.~\cite{JTYZZ23}] \label{conj:main}
    Fix $-1 \le \be < 0 \le \al < 1$. Set $\la = (1-\al)/(\al - \be)$ and $p = \lfloor -\al/\be \rfloor + 1$. Then
    \[
        \Nabd = \begin{dcases}
            \frac{\kpl d}{\kpl-1} + o(d) & \text{if }\kpl < \infty, \\
            d + o(d) & \text{otherwise}.
        \end{dcases}
    \]
\end{conjecture}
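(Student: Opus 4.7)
My plan is to translate the spherical two-distance problem into a spectral statement about signed graphs, following the framework of Jiang--Tidor--Yao--Zhang--Zhao. Given a spherical two-distance set $v_1, \ldots, v_N$ in $\R^d$ with inner products in $\sset{\al, \be}$, associate the signed graph $S^\pm$ on $N$ vertices where edge $ij$ is signed according to whether $\langle v_i, v_j\rangle = \al$ or $\be$. The Gram matrix of the $v_i$ admits an explicit expression as an affine combination of the identity, all-ones matrix, and signed adjacency matrix $A^\pm$; after a rescaling, the PSD and rank-$\le d$ conditions on the Gram matrix translate to $\la = (1-\al)/(\al-\be)$ appearing as an eigenvalue of $-A^\pm$ with multiplicity at least $N - d - O(1)$, while an elementary geometric argument on unit vectors pairwise meeting at inner product $\be < 0$ shows that no negative clique of size $p+1$ can exist, yielding $\chi(S^\pm) \le p = \lfloor -\al/\be\rfloor + 1$.

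For the upper bound, I would apply the definition of $\kpl$ directly to (each component of) $S^\pm$: having $\chi\le p$ and $\la^1 = \la$ forces $\abs{S^\pm}/\mult(\la, S^\pm) \ge \kpl$, and combining with the multiplicity bound gives $N \le \kpl d/(\kpl - 1) + o(d)$, collapsing to $N \le d + o(d)$ when $\kpl = \infty$. Care must be taken to reduce to the case when $\la^1$ equals (not just bounds) $\la$, to handle connected components and isolated defects, and to absorb the $O(1)$ rank loss into $o(d)$. For the lower bound, given $\eps > 0$ I would pick a signed graph $H^\pm$ with $\chi(H^\pm)\le p$, $\la^1(H^\pm) = \la$, and $\abs{H^\pm}/\mult(\la, H^\pm)\le \kpl + \eps$; then use its $\la$-eigenspace to realize a spherical two-distance configuration in a bounded-dimensional subspace and amplify to $\R^d$ via several near-orthogonal copies tied together through a common all-$\al$ direction, yielding at least $\kpl d/(\kpl - 1) - o(d)$ vectors. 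The $\kpl = \infty$ lower bound of $d + o(d)$ follows from a simplex-type orthonormal construction.

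The main obstacle is the upper bound when $\la \ge \la^*$. \cref{thm:main2} guarantees that no finite forbidden subgraph characterization exists for $\Gs$ in this regime, which torpedoes the standard strategy of subtracting a uniformly bounded ``anomalous'' subgraph before bounding the rest by $\la$. To cover the full range $-1 \le \be < 0 \le \al < 1$, one would need a sublinear robustness theorem: every signed graph whose smallest eigenvalue is close to $-\la$ must contain all but $o(N)$ of its vertices inside a well-behaved subgraph whose spectrum is dominated by the $\la$-eigenspace, even when bounded-size anomalies can accumulate unboundedly in number. Proving such a theorem --- an analogue for signed graphs and for $\la \ge \la^*$ of the Hoffman-style limit-point analysis underpinning \cref{cor:main1} --- appears to require a genuinely new ingredient beyond the $\la < \la^*$ toolkit, and I expect this to be the decisive technical step.
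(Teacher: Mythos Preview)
The statement is an open conjecture; the paper proves only the special case $\la < \la^*$ (recorded as \cref{cor:spherical}). You correctly flag the obstacle for $\la \ge \la^*$, and the paper does not overcome it either --- so your proposal is not, and cannot yet be, a proof of the full conjecture.

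For $\la < \la^*$, your outline coincides with the paper's approach: the lower bound is quoted from \cite{JTYZZ20} as \cref{thm:lower-bound}, and the upper bound feeds the finite forbidden subgraph characterization of $\Gsp$ from \cref{cor:main2} into the JTYZZ framework \cref{thm:upper-bound}. One inaccuracy in your first paragraph is worth noting. The translation from the Gram matrix of the two-distance set to a spectral statement about the associated signed graph is more delicate than ``$\la$ appears as an eigenvalue with multiplicity $\ge N - d - O(1)$''; the actual output of the framework is the inequality $N \le d + M_{p,\HH}(\la, N) + O(1)$, where $M_{p,\HH}$ maximizes $\mult(\la, G^\pm)$ over signed graphs with $\chi(G^\pm) \le p$ and only $\la^{p+1}(G^\pm) \le \la$ that avoid a chosen finite family $\HH$ of signed graphs each with $\la^1 > \la$. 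Taking $\HH$ to be the finite forbidden family for $\Gsp$ upgrades the constraint to $\la^1(G^\pm) \le \la$, after which the definition of $\kpl$ applies directly. This is precisely your ``reduce to $\la^1 = \la$'' step, and it is exactly where the forbidden-subgraph input is consumed --- hence unavailable once $\la \ge \la^*$.
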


\cref{conj:main} was confirmed in \cite{JTYZZ23} for $p \le 2$ and for $\la \in \sset{1,\sqrt2, \sqrt3}$ separately. Building on the framework developed there, for all $\la < \la^*$, we establish \cref{conj:main} as an application of \cref{cor:main2}, and we reduce the error term $o(d)$ to a constant depending only on $\al$ and $\be$.

The rest of the paper is organized as follows. We prove \cref{thm:main1,cor:main1} in \cref{sec:forb}, and we prove \cref{thm:main2} in \cref{sec:forb-signed}. Part of the proofs are computer-assisted with validated numerics. In \cref{sec:numerics} we explain our computer-aided proof and how anyone can recreate it independently. In \cref{sec:app} we give an application of \cref{thm:main2} to spherical two-distance sets. In \cref{sec:discussion}, we discuss open problems related to the classification of graphs in $\G(\la^*) \setminus \G(2)$, and a possible way to establish more instances of \cref{conj:main}.

\section{Forbidden subgraphs for \texorpdfstring{$\Gl$}{graphs with eigenvalues bounded from below}} \label{sec:forb}

We break the proof of \cref{thm:main1} into three cases $\la < 2$, $\la \in [2, \la^*)$ and $\la \ge \la^*$, where $\la^* \approx 2.01980$ is defined as in \cref{thm:main1}. It is worth pointing out the spectral graph theoretic interpretation of the peculiar constant $\la^*$.

\begin{proposition}[{\cite[Lemma~5(e)]{JP20}}] \label{lem:e2n}
    For every $n \in \N^+$, define the graph $E_{2,n}$ as in \cref{fig:e2n}. As $n \to \infty$, $\la^1(E_{2,n})$ increases to $\la^*$, or equivalently $\la_1(E_{2,n})$ decreases to $-\la^*$.\footnote{The equivalence is due to the fact that the spectrum of a bipartite graph (e.g.\ a tree) is symmetric around $0$.} \qed
\end{proposition}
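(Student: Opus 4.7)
The plan is to combine a Perron--Frobenius monotonicity argument with an explicit eigenfunction construction on the associated semi-infinite T-graph.

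Since each $E_{2,n}$ is a proper connected induced subgraph of $E_{2,n+1}$, the strict form of Perron--Frobenius yields $\la^1(E_{2,n}) < \la^1(E_{2,n+1})$, so the sequence is strictly increasing. To pin down the limit, I would construct an explicit positive eigenfunction $f$ with eigenvalue $\la^*$ for the semi-infinite graph $E_{2,\infty}$ obtained by extending the long arm of $E_{2,n}$ to a one-way infinite path. Labeling the branching vertex by $c$, the pendant by $p$, the length-2 arm by $u_1, u_2$ (with $u_1 \sim c$), and the vertices of the infinite arm by $v_0 = c, v_1, v_2, \ldots$, I set $t := \be^{1/2}$ so that $\la^* = t + t^{-1}$, and define
\[
    f(v_i) = t^{-i} \ (i \ge 0), \quad f(p) = 1/\la^*, \quad f(u_2) = 1/\bigl((\la^*)^2 - 1\bigr), \quad f(u_1) = \la^* f(u_2).
\]
A direct check at each vertex shows $A(E_{2,\infty}) f = \la^* f$; the only nontrivial identity is at $c$, which after substituting $\la^* = t + t^{-1}$ and clearing denominators reduces to $t^6 = t^2 + 1$, equivalent to $\be^3 = \be + 1$ via $s = t^2$.

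Restricting $f$ to $V(E_{2,n})$ gives a positive vector $\tilde f$ satisfying $A(E_{2,n}) \tilde f \le \la^* \tilde f$ componentwise, with strict inequality at $v_n$ (where the contribution from $v_{n+1}$ has been dropped). The strict subinvariance principle for irreducible non-negative matrices then forces $\la^1(E_{2,n}) < \la^*$. For the matching lower bound I would compute the Rayleigh quotient of $\tilde f$ directly: since $A(E_{2,n}) \tilde f$ and $\la^* \tilde f$ differ only at $v_n$,
\[
    \tilde f^{T} A(E_{2,n}) \tilde f = \la^* \lVert \tilde f \rVert^2 - f(v_n) f(v_{n+1}) = \la^* \lVert \tilde f \rVert^2 - t^{-2n-1},
\]
while $\lVert \tilde f \rVert^2 \ge f(c)^2 = 1$. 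Thus $\la^1(E_{2,n}) \ge \la^* - t^{-2n-1}$, and combined with the upper bound this gives $\la^1(E_{2,n}) \to \la^*$, which together with monotonicity yields $\la^1(E_{2,n}) \uparrow \la^*$ as claimed.

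The chief obstacle is guessing the correct eigenfunction $f$; once it is in hand, the proof reduces to a Perron--Frobenius subinvariance inequality and a Rayleigh quotient estimate. In practice one discovers $f$ by postulating the exponential ansatz $f(v_i) = t^{-i}$ on the long arm, solving the finite-dimensional system on the rest of the graph, and imposing the eigenvalue equation at $c$ --- the matching condition is precisely what produces the plastic-number relation $\be^3 = \be + 1$.
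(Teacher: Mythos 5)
Your argument is correct, and I verified the key computations: the eigenvector equations at $p$, $u_2$, $u_1$ and along the infinite arm hold by construction, the matching condition at the branch vertex $c$ does reduce (after clearing denominators) to $t^6=t^2+1$, i.e.\ $\beta^3=\beta+1$ with $\beta=t^2$, and the truncation argument is sound: restricting $f$ gives $A_{E_{2,n}}\tilde f\le \lambda^*\tilde f$ with strict inequality only at $v_n$, so irreducibility forces $\lambda^1(E_{2,n})<\lambda^*$, while the Rayleigh quotient of $\tilde f$ gives $\lambda^1(E_{2,n})\ge \lambda^*-t^{-2n-1}$ because $\|\tilde f\|^2\ge f(v_0)^2=1$ and $t=\beta^{1/2}>1$. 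Together with strict monotonicity from Perron--Frobenius (each $E_{2,n}$ is a connected proper induced subgraph of $E_{2,n+1}$) this yields $\lambda^1(E_{2,n})\uparrow\lambda^*$, and the statement about $\lambda_1$ follows since $E_{2,n}$ is a tree, hence bipartite, as the footnote in the statement already notes. Be aware, though, that the paper itself does not prove this proposition: it is quoted wholesale from Lemma~5(e) of \cite{JP20}, so there is no internal proof to compare against. What your write-up buys is a short, self-contained justification --- an explicit $\lambda^*$-eigenfunction on the one-ended spider $T(1,2,\infty)$ combined with a subinvariance bound and a Rayleigh estimate that even quantifies the rate of convergence ($\lambda^*-\lambda^1(E_{2,n})\le t^{-2n-1}$), which is more than the cited lemma is used for here.
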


\begin{figure}
    \centering
    \begin{tikzpicture}[very thick, scale=0.4]
        \draw (3,0) -- +(0,2) node[vertex]{};
        \draw (-1,0) node[vertex]{} -- (1,0) node[vertex]{} -- (3,0) node[vertex]{} -- (5,0) node[vertex]{} -- (7,0) node[vertex]{} -- (9,0) node[vertex]{} -- (11,0) node[vertex]{} -- (13,0) node[vertex]{} -- (15,0) node[vertex]{};
        \draw [braket] (15,0.1) -- (5,0.1) node [black,midway,yshift=-15pt] {\footnotesize $n$};
    \end{tikzpicture}
    \caption{$E_{2,n}$} \label{fig:e2n}
\end{figure}

\subsection{Proof of \texorpdfstring{\cref{thm:main1}}{the first main theorem} for \texorpdfstring{$\la < 2$}{λ < 2}} \label{subsec:less-than-2}

Hoffman demonstrated several sequences of graphs $G_1, G_2, \dots$ such that $G_m$ is a subgraph of $G_{m+1}$ for every $m$ and $\lim_{m \to \infty} \la_1(G_m) \le -2$. To state these results, we introduce the following notions.

\begin{definition} \label{def:extension}
    Given a nonempty vertex subset $A$ of a graph $F$, $\ell \in \N$, and $m \in \N^+$,
    \begin{enumerate}[label=(\alph*)]
        \item the \emph{path extension} $(F, A, \ell)$ is obtained from $F$ by adding a path $v_0 \dots v_\ell$ of length $\ell$, and connecting $v_0$ to every vertex in $A$;\footnote{When $\ell = 0$, the path of length $\ell$ is simply a single vertex.}
        \item the \emph{path-clique extension} $(F, A, \ell, K_m)$ is further obtained from $(F, A, \ell)$ by adding a clique of order $m$, and connecting every vertex in the clique to $v_\ell$;
        \item the \emph{clique extension} $(F, A, K_m)$ is obtained from $F$ by adding a clique of order $m$, and connecting every vertex in the clique to every vertex in $A$.
    \end{enumerate}
\end{definition}

The following figure consists of schematic drawings of the path extension $(F, A, \ell)$, the path-clique extension $(F, A, \ell, K_m)$, and the clique extension $(F, A, K_m)$.

\begin{figure}[h]
    \centering
    \begin{tikzpicture}[very thick, scale=0.5, baseline=(v.base)]
        \coordinate (v) at (0,0);
        \draw[rounded corners=14pt] (-7.8, -1) rectangle (-4, 1) {};
        \fill[litegray] (-3,0) -- (-4.68,0.733) -- (-4.68,-0.733) -- cycle;
        \fill[litegray] (-5, 0) circle (0.8);
        \draw (-5,0) node{$A$};
        \draw (-7,0) node{$F$};
        \draw (-3,0) node[vertex]{} -- (-2,0) node[vertex]{} -- (-1,0) node[vertex]{} -- (0,0) node[vertex]{} -- (1,0) node[vertex]{};
        \draw [braket] (1,0.2) -- (-3,0.2) node [black,midway,yshift=-15pt] {\footnotesize $\ell + 1$};
    \end{tikzpicture}\qquad%
    \begin{tikzpicture}[very thick, scale=0.5, baseline=(v.base)]
        \coordinate (v) at (0,0);
        \draw[rounded corners=14pt] (-7.8, -1) rectangle (-4, 1) {};
        \fill[litegray] (-3,0) -- (-4.68,0.733) -- (-4.68,-0.733) -- cycle;
        \fill[litegray] (1,0) -- (2.28,0.96) -- (2.28,-0.96) -- cycle;
        \fill[litegray] (-5, 0) circle (0.8);
        \fill[litegray] (3, 0) circle (1.2);
        \draw (-5,0) node{$A$};
        \draw (-7,0) node{$F$};
        \draw (3,0) node{$K_m$};
        \draw (-3,0) node[vertex]{} -- (-2,0) node[vertex]{} -- (-1,0) node[vertex]{} -- (0,0) node[vertex]{} -- (1,0) node[vertex]{};
        \draw [braket] (1,0.2) -- (-3,0.2) node [black,midway,yshift=-15pt] {\footnotesize $\ell + 1$};
    \end{tikzpicture}\qquad%
    \begin{tikzpicture}[very thick, scale=0.5, baseline=(v.base)]
        \coordinate (v) at (0,0);
        \draw[rounded corners=14pt] (-7.8, -1) rectangle (-4, 1) {};
        \fill[litegray] (-5.107,0.79) -- (-2.16,1.19) -- (-2.16,-1.19) -- (-5.107,-0.79) -- cycle;
        \fill[litegray] (-5, 0) circle (0.8);
        \fill[litegray] (-2, 0) circle (1.2);
        \draw (-5,0) node{$A$};
        \draw (-7,0) node{$F$};
        \draw (-2,0) node{$K_m$};
    \end{tikzpicture}
\end{figure}
  
We compile some of Hoffman's computation~\cite{H77a} and two classical results in the following lemma.

\begin{lemma} \label{lem:hoffman}
    Denote by $C_n$ the cycle of length $n$, and $V_2(C_n)$ a set of two adjacent vertices of $C_n$. The path-clique extensions and clique-extensions of $C_n$ satisfy:
    \begin{itemize}
        \item[(c1)] $\lim_{m\to\infty} \la_1(C_n, V_2(C_n), \ell, K_m) \le -2$ for fixed $n \ge 3$ and $\ell \in \N$; \wlabel{lem:c1}{c1}
        \item[(c2)] $\lim_{m\to\infty} \la_1(C_n, V_2(C_n), K_m) = -2$ for fixed $n \ge 3$. \wlabel{lem:c2}{c2}
    \end{itemize}
    Denote by $K_n$ the complete graphs with $n$ vertices. The path-clique extensions of $K_n$ satisfy:
    \begin{itemize}
        \item[(k)] $\lim_{m\to\infty} \la_1(K_m, V(K_m), \ell, K_m) = -2$ for fixed $\ell \in \N$. \wlabel{lem:k}{k}
    \end{itemize}
    Denote by $\overline{K}_2$ the null graph with $2$ vertices. The path-clique extensions and the clique extensions of $\overline{K}_2$ satisfy:
    \begin{itemize}
        \item[(n1)] $\lim_{m\to\infty} \la_1(\overline{K}_2, V(\overline{K}_2), \ell, K_m) = -2$ for fixed $\ell \in \N$; \wlabel{lem:n1}{n1}
        \item[(n2)] $\lim_{m\to\infty} \la_1(\overline{K}_2, V(\overline{K}_2), K_m) = -2$. \wlabel{lem:n2}{n2}
    \end{itemize}
    Denote by $P_n$ the path of length $n$ (with $n+1$ vertices), and $S_n$ the star with $n$ leaves. Their smallest eigenvalues satisfy:
    \begin{itemize}
        \item[(p)] $\lim_{n\to\infty} \la_1(P_n) = -2$; \wlabel{lem:p}{p}
        \item[(s)] $\la_1(S_n) = -\sqrt{n}$. \wlabel{lem:s}{s} \qed
    \end{itemize}
\end{lemma}

\begin{remark}
    In \cref{lem:hoffman}, (\ref{lem:c1}) to (\ref{lem:n2}) are taken directly from \cite[Lemmas~2.4 to 2.9]{H77a}, and (\ref{lem:p}) and (\ref{lem:s}) follow from the classical results $\la^1(P_n) = 2\cos(\pi/(n+2))$ and $\la^1(S_n) = \sqrt{n}$.
\end{remark}

\cref{lem:hoffman} allows us to build a finite set of forbidden subgraphs for $\Gl$. To state the result, we introduce the following definition.

\begin{definition}[Extension family] \label{def:extension-family}
    Given a graph $F$ and $\ell, m \in \N^+$, the \emph{extension family} $\X(F, \ell, m)$ of $F$ consists of the path-extension $(F, A, \ell)$, the path-clique extension $(F, A, \ell_0, K_m)$, and the clique extension $(F, A, K_m)$, where $A$ ranges over the nonempty vertex subsets of $F$, and $\ell_0$ ranges over $\sset{0, \dots, \ell-1}$.
\end{definition}

\begin{lemma} \label{lem:hoffman-extension}
    For every $\la < 2$, there exist $\ell, m \in \N^+$ such that the extension families $\X(C, \ell, m)$ and $\X(D, \ell, m)$ are both disjoint from $\Gl$, where $C$ is the claw graph and $D$ is the diamond graph (see \cref{fig:claw-and-diamond}).
\end{lemma}

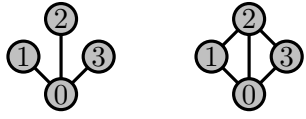
\begin{figure}[t]
    \centering
    \begin{tikzpicture}[very thick, scale=0.8]
        \draw (0,0) -- (-1,1);
        \draw (0,0) -- (0,2);
        \draw (0,0) -- (1,1);
        \node[circ] at (0,0) {0};
        \node[circ] at (-1,1) {1};
        \node[circ] at (0,2) {2};
        \node[circ] at (1,1) {3};
        \draw (5,0) -- +(-1,1) -- +(0,2);
        \draw (5,0) -- +(1,1) -- +(0,2);
        \draw (5,0) -- +(0,2);
        \node[circ] at (5,0) {0};
        \node[circ] at (4,1) {1};
        \node[circ] at (5,2) {2};
        \node[circ] at (6,1) {3};
    \end{tikzpicture}
    \caption{The claw graph $C$ and the diamond graph $D$.} \label{fig:claw-and-diamond}
\end{figure}

\begin{proof}
    From \cref{lem:hoffman}(\ref{lem:p}), we obtain $\ell \in \N^+$ such that $P_\ell \not\in \Gl$. Clearly, for every nonempty $A$, the path extensions $(C, A, \ell)$ and $(D, A, \ell)$, each of which contains $P_\ell$ as a subgraph, are not in $\Gl$. With hindsight, using \cref{lem:hoffman}(\ref{lem:c1}, \ref{lem:c2}, \ref{lem:n1}, \ref{lem:n2}), we choose $m \in \N^+$ such that none of the following graphs
    \begin{equation} \label{eqn:hoffman-extension}
        (C_3, V_2(C_3), \ell_0, K_m), (C_3, V_2(C_3), K_m), (\overline{K}_2, V(\overline{K}_2), \ell_0, K_m), (\overline{K}_2, V(\overline{K}_2), K_m),
    \end{equation}
    where $\ell_0 \in \sset{0, \dots, \ell + 1}$, is in $\Gl$.

    It suffices to prove that every clique extension and every path-clique extension in the extension families $\X(C, \ell, m)$ and $\X(D, \ell, m)$ contains one of the graphs in \eqref{eqn:hoffman-extension} as a subgraph. Label the vertices of $C$ and $D$ as in \cref{fig:claw-and-diamond}, and pick an arbitrary $\ell_0 \in \sset{0, \dots, \ell - 1}$. The clique extension $(C, A, K_m)$ and the path-clique extension $(C, A, \ell_0, K_m)$ respectively contain
    \begin{align*}
        (\overline{K}_2, V(\overline{K}_2), 0, K_m) \text{ and } (\overline{K}_2, V(\overline{K}_2), \ell_0 + 1, K_m) & \text{ when } 0 \in A \text{ and }\abs{A \cap \sset{1,2,3}} \le 1;\\
        (\overline{K}_2, V(\overline{K}_2), 1, K_m) \text{ and } (\overline{K}_2, V(\overline{K}_2), \ell_0 + 2, K_m) & \text{ when } 0 \not\in A \text{ and }\abs{A \cap \sset{1,2,3}} = 1;\\
        (\overline{K}_2, V(\overline{K}_2), K_m) \text{ and } (\overline{K}_2, V(\overline{K}_2), \ell_0, K_m) & \text{ when } \abs{A \cap \sset{1,2,3}} \ge 2,
    \end{align*}
    and the clique extension $(D, A, K_m)$ and the path-clique extension $(D, A, \ell_0, K_m)$ respectively contain
    \begin{align*}
        (\overline{K}_2, V(\overline{K}_2), 0, K_m) \text{ and } (\overline{K}_2, V(\overline{K}_2), \ell_0 + 1, K_m) & \text{ when } A \subseteq \sset{0,2} \text{ and }A \neq \varnothing; \\
        (C_3, V_2(C_3), 0, K_m) \text{ and } (C_3, V_2(C_3), \ell_0 + 1, K_m) & \text{ when } A \in \sset{\sset{1}, \sset{3}}; \\
        (\overline{K}_2, V(\overline{K}_2), K_m) \text{ and } (\overline{K}_2, V(\overline{K}_2), \ell_0, K_m) & \text{ when } A \supseteq \sset{1,3}; \\
        (C_3, V_2(C_3), K_m) \text{ and } (C_3, V_2(C_3), \ell_0, K_m) & \text{ when } \abs{A \cap \sset{0,2}} \ge 1 \text{ and } \abs{A \cap \sset{1,3}} = 1.
    \end{align*}
    Therefore the extension families $\X(C, \ell, m)$ and $\X(D, \ell, m)$ are both disjoint from $\Gl$.
\end{proof}

The next result shows that forbidding a star and an extension family of $F$ effectively forbids $F$ itself in every sufficiently large connected graph.

\begin{lemma} \label{lem:forbid-extensions}
    For every graph $F$ and $k, \ell, m \in \N^+$, there exists $N \in \N$ such that for every connected graph $G$ with more than $N$ vertices, if no member in $\sset{S_k} \cup \X(F, \ell, m)$ is a subgraph of $G$, then neither is $F$.
\end{lemma}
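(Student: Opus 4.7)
The plan is to prove the contrapositive: assume $G$ is connected, contains $F$ as an induced subgraph, and has maximum degree at most $k-1$ (equivalently, $S_k$ is not an induced subgraph of $G$); I will show that once $\abs{V(G)}$ exceeds a threshold depending only on $\abs{V(F)}$, $k$, and $\ell$, the graph $G$ must contain the path extension $(F, A, \ell) \in \X(F, \ell, m)$ for some nonempty $A \subseteq V(F)$. Notably, the clique and path-clique extensions play no role in this lemma, and the parameter $m$ enters only nominally.

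First, I run a breadth-first search from $V(F)$ inside $G$, obtaining distance layers $L_0 = V(F), L_1, \dots, L_d$. Since each vertex in $L_i$ has at least one neighbor in $L_{i-1}$ while every vertex has total degree at most $k-1$, counting edges between consecutive layers yields $\abs{L_i} \le (k-1)\abs{L_{i-1}}$ and hence $\abs{L_i} \le \abs{V(F)}(k-1)^i$. Consequently, if $d \le \ell$ then $\abs{V(G)} \le \abs{V(F)}\bigl(1 + (k-1) + \dots + (k-1)^\ell\bigr)$; taking $N$ to be this quantity forces $d \ge \ell + 1$, so there is a vertex $v$ at distance exactly $\ell + 1$ from $V(F)$.

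Next, fix a shortest path $u_0, u_1, \dots, u_{\ell+1} = v$ with $u_0 \in V(F)$ and $u_i \in L_i$, and set $A$ to be the set of neighbors of $u_1$ in $V(F)$; this set is nonempty since $u_0 \in A$. It remains to check that the subgraph of $G$ induced on $V(F) \cup \sset{u_1, \dots, u_{\ell+1}}$ is isomorphic to the path extension $(F, A, \ell)$. Each $u_i$ with $i \ge 2$ lies at distance $i \ge 2$ from $V(F)$ and therefore has no neighbor in $V(F)$, so $u_1$ is the only added vertex attached to $F$, and precisely along $A$; moreover, consecutive pairs $u_i, u_{i+1}$ are adjacent by construction, while any chord $u_i u_j$ with $j - i \ge 2$ would give $d(u_j, V(F)) \le d(u_i, V(F)) + 1 = i + 1 < j$, contradicting $u_j \in L_j$.

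The proof is quite direct; the only slightly delicate point is verifying the induced structure of the attached path, which reduces to the routine fact that shortest paths are induced paths, combined with the distance characterization of BFS layers. I expect no genuine obstacle, which is consistent with the fact that this lemma plays a purely combinatorial, preparatory role before the spectral arguments take over.
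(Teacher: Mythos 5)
Your argument breaks at the very first step: forbidding $S_k$ as an (induced) subgraph is \emph{not} equivalent to bounding the maximum degree by $k-1$. It only says that no vertex has $k$ pairwise non-adjacent neighbors; a vertex may still have arbitrarily large degree provided its neighborhood is close to a clique. The simplest counterexample shows the whole approach cannot work: take $F = K_3$ and $G = K_n$ with $n$ huge. Then $G$ is connected, contains $F$, contains no induced $S_k$ for any $k \ge 2$, and contains no path extension $(F,A,\ell)$ with $\ell \ge 1$ (any such extension has a non-edge, while $K_n$ has none). So your conclusion ``$G$ contains some $(F,A,\ell)$'' is false, and correspondingly your BFS estimate $\abs{L_i} \le (k-1)\abs{L_{i-1}}$ has no justification. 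The lemma is only true because the extension family also contains the clique extensions $(F,A,K_m)$ and path-clique extensions $(F,A,\ell_0,K_m)$ — in the $K_n$ example it is $(K_3, V(K_3), K_m) = K_{3+m}$ that occurs — so your remark that these ``play no role'' and that $m$ ``enters only nominally'' is exactly where the proof goes wrong.

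The paper's proof splits into the two cases your dichotomy conflates. If the radius of $G$ is large, a shortest path from $V(F)$ to a far vertex gives a path extension, essentially as in your third paragraph (that part of your write-up is fine). Otherwise some vertex $u$ has degree at least $d = R(k, 2^{v+1}m + v)$; since $G[N(u)]$ has no independent set of size $k$, Ramsey's theorem produces a clique of size $2^{v+1}m + v$ in $N(u)$, hence a clique $U$ of size $2^{v+1}m$ disjoint from $V(F)$. Pigeonholing the vertices of $U$ by their attachment pattern to $V(F)$ then yields either a clique extension (if the common attachment set $A$ is nonempty) or, after following a shortest path from $V(F)$ to $U(\varnothing)$, a path extension or a path-clique extension. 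To repair your proposal you would need to add this entire second branch; the radius/BFS argument alone cannot control graphs of unbounded degree, and choosing $N$ as you do only covers the bounded-degree situation, which is not implied by the hypothesis.
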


\begin{proof}
    With hindsight, we choose $N = vd^{\ell}$, where $d$ is the Ramsey number $R(k, 2^{v + 1}m + v)$, and $v$ is the number of vertices of $F$. Suppose that $G$ is a connected graph with more than $N$ vertices that contains no member in $\sset{S_k} \cup \X(F, \ell, m)$ as a subgraph. Assume for the sake of contradiction that the subgraph of $G$ induced on a vertex subset, say $V$, is isomorphic to $F$.
    
    Since $G$ is a connected graph with more than $vd^{\ell}$ vertices, we claim that either there exists a vertex $u$ at distance at least $\ell$ from $V$, or the maximum degree of $G$ is at least $d$. Indeed, assume for the sake of contradiction that every vertex of $G$ is within distance $\ell-1$ from $V$, and the maximum degree of $G$ is less than $d$. For $i \in \sset{0, 1, \dots, \ell-1}$, let $V_i$ be the set of vertices of at distance $i$ from $V$. Clearly $V_0 = V$, and so $\abs{V_0} = v$. Using the assumption on the maximum degree of $G$, one can inductively show that $\abs{V_i} \le vd^i$. Therefore the number of vertices in $G$ expressed as $\sum_{i = 0}^{\ell - 1}\abs{V_i}$ is at most $\sum_{i=0}^{\ell - 1}vd^i = v(d^\ell - 1)/(d-1) \le vd^\ell$, which yields a contradiction.
    
    We break the rest of the proof into two cases.

    \bigskip
    \noindent\textit{Case 1:} There exists a vertex $u$ at distance at least $\ell$ from $V$. Thus the subgraph $G[V]$ and a shortest path from $V$ to $u$ contains the path extension $(F, A, \ell)$ as a subgraph for some nonempty $A \subseteq V$, which is a contradiction.

    \bigskip
    \noindent\textit{Case 2:} The maximum degree of $G$ is at least $d$. Let $u$ be a vertex of $G$ with the maximum degree, and let $N(u)$ be the set of neighbors of $u$ in $G$. Since $S_k$ is not a subgraph of $G$, the subgraph $G[N(u)]$ cannot have an independent set of size $k$. Because $d = R(k, 2^{v + 1}m + v)$, the subgraph $G[N(u)]$ contains a clique of order $2^{v + 1}m + v$, and so there exists a clique $G[U]$ of size $2^{v + 1}m$ that is vertex-disjoint from $F$.
    
    We further partition the vertices in $U$ as follows. For every subset $A$ of $V$, let $U(A)$ be the set of vertices in $U$ that are adjacent to all vertices in $A$ and not adjacent to any vertex in $V \setminus A$. By the pigeonhole principle, there exists a subset $A$ of $V$ such that $\abs{U(A)} \ge 2m$. If $A$ is nonempty, then $G[V \cup U(A)]$ contains the clique extension $(F, A, K_m)$ as a subgraph, which is a contradiction.
    
    Hereafter we may assume that $\abs{U(\varnothing)} \ge 2m$, hence the distance between $V$ and $U(\varnothing)$ is at least $2$. Let $v_0 \dots v_{\ell_0}$ be a shortest path between $V$ and $U(\varnothing)$, where $v_0$ and $v_{\ell_0}$ are respectively at distance $1$ from $V$ and $U(\varnothing)$. Let $A_1$ be the nonempty set of vertices in $V$ that are adjacent to $v_0$, and denote by $v_{\ell_0 + 1}$ an arbitrary vertex in $U(\varnothing)$ that is adjacent to $v_{\ell_0}$. We may assume that $\ell_0 < \ell - 1$ because otherwise $G[V \cup \sset{v_0, \dots, v_{\ell_0}, v_{\ell_0 + 1}}]$ would contain the path extension $(F, A_1, \ell)$ as a subgraph, which is a contradiction. Let $A_2$ be the nonempty set of vertices in $U(\varnothing)$ that are adjacent to $v_{\ell_0}$. If $\abs{A_2} \ge m$, then $G[V \cup \sset{v_0, \dots, v_{\ell_0}} \cup A_2]$ contains the path-clique extension $(F, A_1, \ell_0, K_m)$ as a subgraph. Otherwise $\abs{U(\varnothing) \setminus A_2} > m$, and so $G[V \cup \sset{v_0, \dots, v_{\ell_0}, v_{\ell_0 + 1}} \cup (U(\varnothing) \setminus A_2)]$ contains the path-clique extension $(F, A_1, \ell_0 + 1, K_m)$.
\end{proof}

Combining \cref{lem:hoffman-extension,lem:forbid-extensions}, we obtain a finite set of forbidden subgraphs for $\Gl$ that forbids the claw graph and the diamond graph in every sufficiently large connected graph. The following result, which is an immediate consequence of \cite[Theorem~4]{RW65}, gives a sufficient condition for line graphs.

\begin{theorem}[Theorem~4 of van Rooij and Wilf~\cite{RW65}] \label{thm:claw-diamond}
    Every graph that contains neither the claw graph nor the diamond graph as a subgraph is a line graph. \qed
\end{theorem}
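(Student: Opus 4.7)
The plan is to invoke the classical Krausz characterization of line graphs: a graph $G$ is a line graph if and only if its edge set admits a partition into cliques such that every vertex lies in at most two cliques of the partition. I would take this partition to be the family of all maximal cliques of $G$ (regarding each isolated vertex as a trivial clique) and verify the two Krausz conditions directly from the two excluded induced subgraphs.

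First, diamond-freeness forces any two distinct maximal cliques to share at most one vertex, equivalently, every edge of $G$ lies in a unique maximal clique. Indeed, if an edge $uv$ lay in two distinct maximal cliques $K_1, K_2$, pick $w_1 \in K_1 \setminus K_2$ and $w_2 \in K_2 \setminus K_1$; then $u, v, w_1, w_2$ are pairwise adjacent except possibly for the pair $w_1 w_2$. If $w_1 w_2 \in E(G)$ they induce $K_4$, contradicting the maximality of $K_1$; otherwise they induce a diamond. Next, claw-freeness combined with the previous step forces every vertex to belong to at most two maximal cliques. If $v$ belonged to three distinct maximal cliques $K_1, K_2, K_3$, pick $w_i \in K_i \setminus \{v\}$ for $i=1,2,3$; no pair $w_i w_j$ can be an edge, since then the triangle $\{v, w_i, w_j\}$ would lie in a single maximal clique by the previous step, forcing $K_i = K_j$. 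Hence $\{v, w_1, w_2, w_3\}$ induces a claw, a contradiction.

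With both Krausz conditions in hand, the standard construction produces a root graph $H$ with $G = L(H)$: take one vertex of $H$ for each maximal clique of $G$, append a pendant vertex for each vertex of $G$ that lies in only one maximal clique, and declare each vertex of $G$ to be the edge of $H$ joining the (at most two) endpoints corresponding to the cliques containing it. Structurally one can summarise the whole argument by the observation that in a diamond-free graph $G[N(v)]$ is a disjoint union of cliques for every $v$, while claw-freeness caps the number of those cliques at two. Because both structural properties drop straight out of the forbidden subgraphs, there is no major obstacle; the only mild bookkeeping is the handling of isolated and degree-one vertices when writing $H$ down explicitly.
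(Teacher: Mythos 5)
The paper offers no argument for this statement at all---it is quoted as an immediate consequence of van Rooij and Wilf's Theorem~4---so your Krausz-style proof is a reasonable self-contained substitute, and its overall architecture (diamond-freeness gives that every edge lies in a unique maximal clique, claw-freeness then caps each vertex at two maximal cliques, and Krausz's characterization finishes) is the standard way to prove exactly this consequence.

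There is, however, one flawed sub-step. In the diamond-freeness argument you pick arbitrary $w_1 \in K_1 \setminus K_2$ and $w_2 \in K_2 \setminus K_1$ and claim that if $w_1w_2 \in E(G)$ then $\{u,v,w_1,w_2\}$ induces a $K_4$, ``contradicting the maximality of $K_1$.'' That inference does not follow: $w_2$ need not be adjacent to the remaining vertices of $K_1$, so the maximality of $K_1$ is untouched; and since the paper's convention is that forbidden subgraphs are induced, an induced $K_4$ is not itself forbidden (it contains a diamond only as a non-induced subgraph), so you cannot discard this case outright. The standard repair is to choose $w_2$ more carefully: since $w_1 \notin K_2$ and $K_2$ is a maximal clique, $K_2 \cup \{w_1\}$ is not a clique, so some $w_2 \in K_2$ is non-adjacent to $w_1$; this $w_2$ is distinct from $u$ and $v$ (both are adjacent to $w_1$), and then $\{u,v,w_1,w_2\}$ induces a diamond, giving the contradiction with no case split. (Equivalently, diamond-freeness says $N(u)\cap N(v)$ is a clique for every edge $uv$, which yields the unique maximal clique containing $uv$ directly.) With that fix, the second step is fine---note only that $w_1,w_2,w_3$ are automatically distinct because step one forces $K_i \cap K_j = \{v\}$---and the appeal to Krausz's theorem and the standard root-graph construction is legitimate.
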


Furthermore, we can bootstrap to obtain a finite set of forbidden subgraphs for $\Gl$ that forces every sufficiently large connected graph to be the line graph of a tree whose complexity is uniformly bounded.

\begin{lemma} \label{lem:line-graph-of-tn}
    For every $\la < 2$, there exist $N \in \N$ and a finite family $\F_1$ that is disjoint from $\Gl$ such that for every connected graph $G$ with more than $N$ vertices, if $G$ contains no member in $\F_1$ as a subgraph, then there exists a rooted tree $H \in \mathcal{T}_N$ such that $G$ is the line graph of $H$, where $\mathcal{T}_N$ is the family of rooted trees such that every connected component obtained from removing the root has at most $N$ vertices.
\end{lemma}

\begin{proof}
    We obtain $\ell, m \in \N^+$ from \cref{lem:hoffman,lem:hoffman-extension} such that the following family
    \begin{align*}
        \F_1 := & \sset{S_4, P_\ell} \cup \X(C, \ell, m) \cup \X(D, \ell, m) \\
        & \cup \dset{(C_n, V_2(C_n), \ell_0, K_m)}{n \in \sset{3, \dots, \ell + 1}, \ell_0 \in \sset{0, \dots, \ell - 1}} \\
        & \cup \dset{(C_n, V_2(C_n), K_m)}{n \in \sset{3, \dots, \ell + 1}} \\
        & \cup \dset{(K_m, V(K_m), \ell_0, K_m)}{\ell_0 \in \sset{0, \dots, \ell-1}}
    \end{align*}
    is disjoint from $\Gl$.

    \label{proof:main1-claim-start}
    We obtain $N_0$ from \cref{lem:forbid-extensions} such that for every connected graph $G$ with more than $N_0$ vertices, if no member in $\sset{S_4} \cup \X(C, \ell, m) \cup \X(D, \ell, m)$ is a subgraph of $G$, then neither is $C$ nor $D$. With hindsight, we choose $N = \max(N_0, (m+1)^{\ell+2})$. Suppose that $G$ is a connected graph with more than $N$ vertices, and suppose that no member in $\F_1$ is a subgraph of $G$. By our choice of $N_0$, we know that neither $C$ nor $D$ is a subgraph of $G$.
    
    \cref{thm:claw-diamond} implies that $G$ is the line graph of another connected graph, say $H$. Since $P_\ell$ is not a subgraph of $G$, the path $P_{\ell + 1}$ cannot be a general subgraph of $H$. In particular, the diameter\footnote{Recall that the \emph{diameter} of a graph is the maximum distance between a pair of vertices in the graph.} of $H$ is at most $\ell$. Let $d$ be the maximum degree of $H$. If $d < m + 2$, then $H$ has at most $(m+1)^{\ell+1}$ vertices, and at most $(m+1)^{\ell+2}$ edges, and so $G$ has at most $(m+1)^{\ell+2}$ vertices, which is a contradiction.
    
    We may assume that $d \ge m + 2$. Let $u$ be a vertex of $H$ with degree $d$. We claim that $H$ is a tree. Suppose on the contrary that $H$ contains a cycle as a general subgraph. Since $H$ does not contain $P_{\ell + 1}$ as a general subgraph, the length of any cycle in $H$ cannot exceed $\ell + 1$. Suppose in addition that $u$ is on a cycle in $H$. Let $C_n$ be a shortest cycle containing $u$, where $n \in \sset{3, \dots, \ell + 1}$. By the choice of $C_n$, the vertex $u$ has at least $d - 2 \ge m$ neighbors outside $C_n$. Thus $H$ contains $C_n$ with $S_m$ attached at $u$ as a general subgraph, and so $G$ contains $(C_n, V_2(C_n), K_m)$ as a subgraph, which is a contradiction. Therefore $u$ is not on any cycle in $H$. Let $C_n$ be any cycle in $H$, where $n \in \sset{3, \dots, \ell + 1}$. Let $P_{\ell_0}$ be a shortest path between $u$ and $C_n$, where $\ell_0 \in \sset{1, \dots, \ell}$. Since $u$ is not on any cycle, $u$ has at least $d - 1 > m$ neighbors outside $C_n$ and $P_{\ell_0}$. Thus $H$ contains $C_n \cup P_{\ell_0}$ with $S_m$ attached at $u$ as a general subgraph, and so $G$ contains $(C_n, V_2(C_n), \ell_0 - 1, K_m)$ as a subgraph, which is a contradiction.

    We now view $H$ as a tree rooted at $u$. We claim that $u$ is the only vertex with degree larger than $m$. Suppose on the contrary that there is another vertex $u'$ with degree at least $m + 1$ in $H$. Let $P_{\ell_0}$ be a shortest path between $u$ and $u'$, where $\ell_0 \in \sset{1, \dots, \ell}$. Thus $H$ contains $P_{\ell_0}$ with two vertex-disjoint stars $S_m$ respectively attached to $u$ and $u'$ as a (general) subgraph, and so $G$ contains $(K_m, V(K_m), \ell_0 - 1, K_m)$ as a subgraph, which is a contradiction. Therefore all the vertices but $u$ in $H$ has degree at most $m$. After the root $u$ is removed from $H$, each connected component has diameter at most $\ell$ and degree at most $m$, and so each connected component has at most $m^{\ell+1} \le N$ vertices.
\end{proof}

The last ingredient for the proof of \cref{thm:main1} for $\la < 2$ is the following lemma attributed to Dickson, who used it to prove a result about perfect numbers in number theory.

\begin{lemma}[Lemma~A of Dickson~\cite{D13}] \label{lem:dickson}
    For every $n \in \N^+$, the partially ordered set $(\N^n, \le)$, in which $(a_1, \dots a_n) \le (b_1, \dots, b_n)$ if and only if $a_i \le b_i$ for every $i$, does not contain infinite antichains.
\end{lemma}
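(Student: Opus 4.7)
The plan is to prove, by induction on $n$, the slightly stronger statement that $(\N^n, \le)$ is a well-quasi-order: every infinite sequence $(a^{(i)})_{i \ge 1}$ in $\N^n$ contains an infinite non-decreasing subsequence $a^{(i_1)} \le a^{(i_2)} \le \dots$. This immediately implies the non-existence of infinite antichains, since any antichain shares at most one element with such a non-decreasing subsequence.

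The base case $n = 1$ is a classical observation about natural numbers: given $(a_i) \subseteq \N$, either some value is attained by infinitely many terms (take that constant subsequence), or the terms take infinitely many distinct values, in which case a greedy argument extracts an infinite strictly increasing subsequence.

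For the inductive step, assume the strengthened claim in dimension $n - 1$, and let $(a^{(i)})_{i \ge 1}$ be an arbitrary infinite sequence in $\N^n$. First apply the base case to the first-coordinate projections $(a^{(i)}_1)$ to pass to an infinite subsequence on which the first coordinates are non-decreasing. Next, apply the inductive hypothesis to the $(n-1)$-tuples $(a^{(i)}_2, \dots, a^{(i)}_n) \in \N^{n-1}$ along that subsequence, extracting a further infinite subsequence on which these tails are componentwise non-decreasing. On this final subsequence every coordinate is monotone non-decreasing, so the entire vectors are, completing the induction.

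No real obstacle is anticipated; the argument is entirely routine. The only minor point warranting care is stating the inductive claim in the stronger "infinite non-decreasing subsequence" form, since the ostensibly weaker "no infinite antichain" hypothesis is not directly strong enough to iterate across coordinates: one must simultaneously rule out the interplay between incomparabilities coming from the first coordinate and those coming from the remaining $n-1$ coordinates, and the non-decreasing-subsequence formulation absorbs this cleanly.
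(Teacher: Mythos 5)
Your proof is correct, but it takes a genuinely different route from the paper. You argue by induction on the dimension $n$, strengthening the claim to the well-quasi-order property that every infinite sequence in $\N^n$ admits an infinite coordinatewise non-decreasing subsequence: the base case $n=1$ is the usual monotone-subsequence fact for $\N$, and the inductive step nests two subsequence extractions (first coordinate, then the remaining $n-1$ coordinates). This is essentially Dickson's original inductive argument, and your remark that the strengthening is what makes the induction go through is exactly the right point of care -- the bare ``no infinite antichain'' statement does not iterate across coordinates. The paper instead gives a one-shot combinatorial proof via the infinite Ramsey theorem: it colors each pair $s_is_j$ ($i<j$) of an alleged infinite family of distinct tuples with color $0$ if $s_i < s_j$ and otherwise with a coordinate $k$ where $s_{i,k} > s_{j,k}$, extracts an infinite monochromatic set, rules out every color $k \ge 1$ because $(\N,\le)$ has no infinite descending chain, and concludes that the family contains an infinite ascending chain. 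Your approach is more elementary (no Ramsey machinery) and directly yields the stronger wqo conclusion for the product order; the paper's approach avoids induction entirely and is shorter once the infinite Ramsey theorem is taken as given. Both are complete proofs of the lemma.
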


Dickson gave two proofs of \cref{lem:dickson}, one of which uses induction, while the other uses Hilbert's basis theorem. For the reader's convenience, we provide a short combinatorial proof based on the infinite Ramsey's theorem.

\begin{proof}
    Suppose that $s_1, s_2, \dots$ is an infinite sequence of distinct tuples in $\N^n$. For every $i < j$, color the edge $s_i s_j$ by $0$ if $s_i < s_j$, otherwise by any $k \in \sset{1, \dots, n}$ such that $s_{i,k} > s_{j,k}$. The infinite Ramsey's theorem provides an infinite subset $I \subseteq \N^+$ such that the edges $s_i s_j$, where $i, j \in I$ and $i\neq j$, all receive the same color $c$. Because $(\N, \le)$ does not contain infinite descending chains, it must be the case that $c = 0$. This implies that $\dset{s_i}{i \in I}$ is an infinite ascending chain, and in particular $\sset{s_1, s_2, \dots}$ is not an antichain.
\end{proof}

We are ready to establish the first main theorem for $\la < 2$.

\begin{proof}[Proof of \cref{thm:main1} for $\la < 2$]
    Let $N \in \N$ and $\F_1$ be given by \cref{lem:line-graph-of-tn}, and set
    \begin{align*}
        \F_0 & := \dset{G \not\in \Gl}{G \text{ has at most }N \text{ vertices}}, \\
        \widetilde{\F}_2 & := \dset{G \not\in \Gl}{\exists H \in \mathcal{T}_N \text{ s.t.\ }G = L(H)},
    \end{align*}
    where $\mathcal{T}_N$ is the family of rooted trees such that every connected component obtained from removing the root has at most $N$ vertices. Setting $\F_2$ to be the family of graphs that are minimal in $\widetilde{\F}_2$ under taking subgraphs, one can check that $\F_0 \cup \F_1 \cup \F_2$ is a forbidden subgraph characterization of $\Gl$.

    It suffices to prove that $\F_2$ is finite. Let $T_1, \dots, T_n$ be an enumeration of rooted trees on at most $N$ vertices. We encode $G \in \F_2$ by $t_G \in \N^n$ as follows. Let $H$ be the rooted tree in $\mathcal{T}_N$ such that $G = L(H)$. After removing the root $u$ from $H$, we view each connected component as a tree rooted at the vertex that is a child of $u$ in $H$. Set $t_G := (t_1, \dots, t_n)$, where $t_i$ is the number of occurrences of $T_i$ as a connected component in the graph obtained by removing $u$ from $H$. Because no member of $\F_2$ is a subgraph of any other, one can deduce that $\dset{t_G}{G \in \F_2}$ is an antichain in $(\N^n, \le)$, and so $\F_2$ is finite by \cref{lem:dickson}.
\end{proof}

\subsection{Proof of \texorpdfstring{\cref{thm:main1}}{the first main theorem} for \texorpdfstring{$\la \in [2, \la^*)$}{λ in [2, λ*)}} \label{sec:main1-2}

We prove a stronger result from which the first main theorem for $\la \in [2, \la^*)$ and its corollary for $\la \in (2,\la^*)$ follow.

\begin{theorem} \label{thm:main1-2}
    For every $\la \in [2, \la^*)$, the number of connected graphs in $\Gl \setminus \G(2)$ is finite.
\end{theorem}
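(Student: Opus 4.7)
The plan is to show that any connected graph $G \in \G(\la) \setminus \G(2)$ has order bounded by a constant depending only on $\la$, through a decomposition into a small ``bad core'' responsible for $\la_1 < -2$ plus a bounded ambient extension, mirroring the strategy of \cref{subsec:less-than-2}. Since $G \not\in \G(2)$, one can extract a minimal induced subgraph $H \subseteq G$ with $\la_1(H) < -2$; by the result of Kumar, Rao and Singhi mentioned in the introduction, every such $H$ has at most $10$ vertices, so $H$ ranges over a fixed finite family $\HH$. Assuming for contradiction that there is an infinite sequence of connected graphs $G_n \in \G(\la) \setminus \G(2)$ with $|V(G_n)| \to \infty$, a pigeonhole step fixes the isomorphism class of the bad core $H \subseteq G_n$.

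In analogy with \cref{lem:forbid-extensions}, the next step is to show that if $G_n$ is connected, contains $H$, and has sufficiently large order, then $G_n$ must contain some extension of $H$ from \cref{def:extension} with an unbounded parameter: either a path extension $(H, A, \ell)$ with $\ell \to \infty$, a clique extension $(H, A, K_m)$ with $m \to \infty$, or a path-clique extension $(H, A, \ell_0, K_m)$ with unbounded $\ell_0$ or $m$, for some nonempty attachment $A \subseteq V(H)$. This follows from a Ramsey-radius dichotomy on $|V(G_n)|$: large order forces either a large radius (hence long path extensions via shortest paths from $H$) or a high-degree vertex whose neighborhood contains a large clique by Ramsey's theorem.

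Finally, I would verify that every such extension family drives the smallest eigenvalue down to at most $-\la^*$ as the extension parameter grows. The prototype is \cref{lem:e2n}: the graph $E_{2,n}$ is itself a path extension of the minimal bad core $T_{1,2,6}$ and satisfies $\la_1(E_{2,n}) \downarrow -\la^*$. Combined with Cauchy interlacing, a limit smallest eigenvalue of at most $-\la^*$ would force $\la_1(G_n) < -\la$ for all sufficiently large $n$, contradicting $G_n \in \G(\la)$.

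The principal obstacle is precisely this last step. It is conceivable a priori that some extension family of a particular core $H \in \HH$ at a particular attachment $A \subseteq V(H)$ has limit $\la_1$ strictly inside the open interval $(-\la^*, -2)$, which would invalidate the approach. Ruling this out systematically will likely require the validated numerics framework of \cref{sec:numerics}, applied uniformly to the extensions of all cores in $\HH$, or a more conceptual root-lattice argument in the spirit of Cameron--Goethals--Seidel--Shult showing that no limit of smallest graph eigenvalues can lie in the open interval $(-\la^*, -2)$ to begin with.
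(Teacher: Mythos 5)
Your overall architecture is the same as the paper's: forbid a finite family of ``cores'' whose presence certifies $\la_1 < -2$, show via a Ramsey/radius dichotomy (the analogue of \cref{lem:forbid-extensions}) that any large connected graph containing a core must contain a path, clique, or path-clique extension of it with large parameter, and conclude that connected graphs in $\Gl\setminus\G(2)$ have bounded order. The one genuine difference is your choice of cores: you take the minimal forbidden subgraphs for $\G(2)$ (finitely many by the Kumar--Rao--Singhi $10$-vertex bound), whereas the paper works with the $31$ minimal forbidden subgraphs for the family $\D_\infty$ of generalized line graphs, using \cref{thm:forb-char-generalized-line,thm:generalized-line-smallest-ev} to get back into $\G(2)$; the paper explicitly remarks that your variant is viable in principle but impractical because the $1812$ graphs are not readily available, which is exactly why it routes through $\D_\infty$.

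The problem is that you stop at the decisive step. The entire quantitative content of the theorem --- the reason the threshold is $\la^*$ and not some other number in $(2,\la')$ --- lives in the claim that for \emph{every} core $H$ and \emph{every} nonempty attachment $A$, $\lim_{\ell}\la_1(H,A,\ell)\le-\la^*$ and $\lim_{m}\la_1(H,A,K_m)\le-\la^*$. You correctly identify that a single exception with limit inside $(-\la^*,-2)$ would destroy the argument, but you neither prove the claim nor reduce it to a finite, checkable computation; ``likely require the validated numerics framework \ldots or a more conceptual root-lattice argument'' is a statement of the problem, not a proof. In the paper this is precisely \cref{lem:computation-1}, established by the computer-assisted verification of \cref{sec:numerics} (and the bound is sharp: for $G_{23}$ with $A=\sset{1}$ or $\sset{2}$ the path extensions are the graphs $E_{2,n}$ and the limit equals $-\la^*$), so the step cannot be waved through. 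A second, smaller omission: the path-clique extensions $(H,A,\ell_0,K_m)$ involve two parameters, and forbidding them for all $\ell_0<\ell$ with a single $m$ does not follow from the path- and clique-extension limits by interlacing alone; the paper needs the separate Rayleigh-quotient argument of \cref{lem:path-clique-extension} for this, and your sketch does not address it.
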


\begin{proof}[Proof of \cref{thm:main1} for $\la \in [2, \la^*)$ assuming \cref{thm:main1-2}] \label{proof:main1-0}
    It is known that $\G(2)$ has a finite forbidden subgraph characterization --- in fact, there are $1812$ minimal forbidden subgraphs for $\G(2)$; see \cite{BN92}. In view of \cref{thm:main1-2}, it suffices to show that if $G$ is a minimal forbidden subgraph for $\Gl$, then $G$ is a minimal forbidden subgraph for $\G(2)$, or $G$, after removing some vertex, is a connected graph in $\Gl \setminus \G(2)$.
    
    Indeed, let $G$ be a minimal forbidden subgraph for $\Gl$. Because $\la_1(G) < -\la \le -2$, the graph $G$ contains a minimal forbidden subgraph for $\G(2)$ as a subgraph, say $F$. Clearly both $G$ and $F$ are connected. We may assume that $F$ is a proper subgraph of $G$ because otherwise we are done already. Consider the graph $H$ obtained after removing $v$ from $G$, where $v$ is a vertex of $G$ that is furthest from $F$. Using the connectivity of $F$, one can then show that $H$ is connected. Finally, $H \in \Gl$ because of the minimality of $G$, and $H \not\in \G(2)$ because $F$ is a subgraph of $H$.
\end{proof}

\begin{proof}[Proof of \cref{cor:main1} for $\la \in (2,\la^*)$]
    It follows immediately from \cref{thm:main1-2} that $-\la$ is not a limit point of the set of smallest eigenvalues of graphs for $\la \in (2,\la^*)$
\end{proof}

The proof of \cref{thm:main1-2} centers around the notion of \emph{generalized line graphs}. Although we do not need their definition, we state it nevertheless for concreteness.

\begin{definition}[Cocktail party graphs and generalized line graphs] \label{def:gen-line-graph}
    The \emph{cocktail party graph} $\overline{a K_2}$ is obtained from the complete graph on $2a$ vertices by deleting a perfect matching. Given a graph $G$ with vertices $v_1, \dots, v_n$, and $a_1, \dots a_n \in \N$, the \emph{generalized line graph} $L(G; a_1, \dots, a_n)$ is obtained from the line graph $L(G)$ of $G$ by adjoining $n$ vertex-disjoint cocktail party graphs $\overline{a_1K_2}, \dots, \overline{a_nK_2}$ where every vertex of the $i$th cocktail party graph $\overline{a_iK_2}$ is adjacent to every vertex of $L(G)$ that contains $v_i$. See \cref{fig:generalized-line-graph} for an example.
\end{definition}

\begin{figure}[t]
    \centering
    \begin{tikzpicture}[very thick, scale=0.6, baseline=(v.base)]
        \coordinate (v) at (0,0);
        \draw (-2,0) -- (0,-1) node[vertex]{} -- (2,0);
        \draw (-2,0) node[vertex]{} -- (2,0) node[vertex]{} -- (0,1) node[vertex]{} -- cycle;
        \draw (-2,0) node[left]{$v_1$};
        \draw (0,1) node[above]{$v_2$};
        \draw (0,-1.1) node[below]{$v_3$};
        \draw (2,0) node[right]{$v_4$};
    \end{tikzpicture}\qquad
    \begin{tikzpicture}[very thick, scale=0.6, baseline=(v.base)]
        \coordinate (v) at (0,0);
        \draw[fill=litegray, draw=none] (0,0) -- (-5.5,0.2) -- (-5.5,-0.2) -- cycle;
        \draw[fill=litegray, draw=none] (-3,1.5) -- (-5.5,0.25) -- (-5.5,-0.3) -- cycle;
        \draw[fill=litegray, draw=none] (-3,-1.5) -- (-5.5,0.3) -- (-5.5,-0.25) -- cycle;
        \draw[fill=litegray, draw=none] (-3,1.5) -- (0,3.3) -- (0,2.8) -- cycle;
        \draw[fill=litegray, draw=none] (3,1.5) -- (0,3.3) -- (0,2.8) -- cycle;
        \draw[fill=litegray, draw=none] (0,0) -- (6,0.22) -- (6,-0.22) -- cycle;
        \draw[fill=litegray, draw=none] (3,1.5) -- (6,0.27) -- (6,-0.32) -- cycle;
        \draw[fill=litegray, draw=none] (3,-1.5) -- (6,0.32) -- (6,-0.27) -- cycle;
        \draw (-3,1.5) node[vertex]{} -- (3,1.5) -- (3,-1.5) node[vertex]{} -- (-3,-1.5) -- (-3,1.5) -- (3,-1.5);
        \draw (-3,-1.5) node[vertex]{} -- (0,0) node[vertex]{} -- (3,1.5)node[vertex]{};
        \node[below] at (0,-0.3) {$v_1v_4$};
        \node[above] at (-3.5,1.5) {$v_1v_2$};
        \node[above] at (3.5,1.5) {$v_2v_4$};
        \node[below] at (-3.5,-1.6) {$v_1v_3$};
        \node[below] at (3.5,-1.6) {$v_3v_4$};
        \draw[fill=litegray, draw=none] (-5.5, 0) circle (1.1);
        \draw (-5, -0.5) node[vertex]{} -- (-5,0.5) node[vertex]{}-- (-6,0.5) node[vertex]{} -- (-6,-0.5) node[vertex]{} -- cycle;
        \draw[fill=litegray, draw=none] (0, 3) circle (0.9);
        \draw (-0.5,3) node[vertex]{};
        \draw (0.5,3) node[vertex]{};
        \draw[fill=litegray, draw=none] (6, 0) circle (1.4);
        \draw (7,0) -- ++(150:{sqrt(3)}) -- ++(0,{-sqrt(3)}) -- cycle;
        \draw (5,0) -- ++(30:{sqrt(3)}) -- ++(0,{-sqrt(3)}) -- cycle;
        \draw (7,0) node[vertex]{} -- ++(120:1) node[vertex]{} -- ++(-1,0) node[vertex]{} -- ++(240:1) node[vertex]{} -- ++(-60:1) node[vertex]{} -- ++(1,0) node[vertex]{} -- cycle;
    \end{tikzpicture}
    \caption{A graph $G$ and a schematic drawing of its generalized line graph $L(G; 2, 1, 0, 3)$.} \label{fig:generalized-line-graph}
\end{figure}
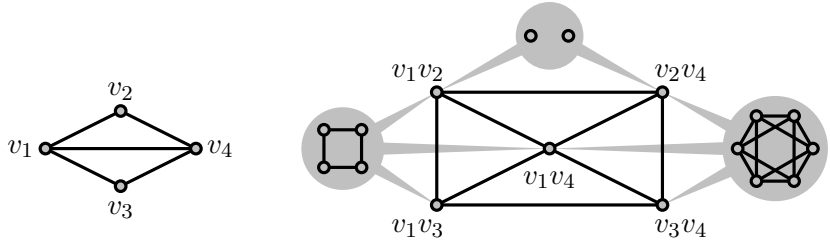

Just like line graphs, all the generalized line graphs are in $\G(2)$, and they have a finite forbidden subgraph characterization.

\begin{theorem}[Theorem~2.1 of Hoffman~\cite{H77b}] \label{thm:generalized-line-smallest-ev}
    The smallest eigenvalue of a generalized line graph is at least $-2$. \qed
\end{theorem}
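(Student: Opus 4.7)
The plan is to exhibit an explicit ``augmented incidence matrix'' $N$ with $\{0, \pm 1\}$ entries such that $A + 2I = N^\top N$, where $A$ is the adjacency matrix of the generalized line graph $L(G; a_1, \dots, a_n)$. Since $N^\top N$ is positive semidefinite, this immediately yields $\lambda_1(L(G; a_1, \dots, a_n)) \ge -2$. This generalizes the classical trick for ordinary line graphs (\cref{footnote:line-graph}), where the vertex–edge incidence matrix $B$ of $G$ satisfies $B^\top B = A(L(G)) + 2I$; the new ingredient will be a collection of auxiliary rows carrying signed entries that encode the removed matchings of the cocktail party graphs.

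Concretely, I would index the rows of $N$ by $V(G) \sqcup \dset{z_{i,k}}{1 \le i \le n,\, 1 \le k \le a_i}$ and the columns of $N$ by the vertex set of $L(G; a_1, \dots, a_n)$, which decomposes as $E(G) \sqcup \bigsqcup_i V(\overline{a_i P_2})$. Identify $V(\overline{a_i P_2})$ with $\dset{u_{i,k}^{\pm}}{1 \le k \le a_i}$, where $\sset{u_{i,k}^{+}, u_{i,k}^{-}}$ is the $k$-th removed matching pair. The entries of $N$ are then:
\begin{itemize}[nosep]
    \item for a column indexed by an edge $e = v_i v_j \in E(G)$, put $1$ in rows $v_i$ and $v_j$ and $0$ elsewhere;
    \item for a column indexed by $u_{i,k}^{\pm}$, put $1$ in row $v_i$, $\pm 1$ in row $z_{i,k}$, and $0$ elsewhere.
\end{itemize}

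The bulk of the proof is then a case analysis verifying $(N^\top N)_{xy}$ against the adjacency structure of $L(G; a_1, \dots, a_n)$. Each column of $N$ has exactly two nonzero entries of absolute value $1$, so every diagonal entry of $N^\top N$ equals $2$, matching $A + 2I$. For off-diagonal entries, the inner product of two columns vanishes unless those columns share a nonzero row, and in that case equals $\pm 1$. Checking the sub-cases (two edge columns; one edge and one cocktail-party column; two cocktail-party columns at distinct $v_i$; two cocktail-party columns at the same $v_i$ but different pair index; and finally the pair $u_{i,k}^{+}, u_{i,k}^{-}$) recovers exactly the rules of \cref{def:gen-line-graph}; crucially, the sign cancellation $1\cdot 1 + (+1)(-1) = 0$ in the last case produces the single non-edge inside each removed matching.

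No real obstacle should arise: the entire argument is linear algebra once $N$ is written down, and the only subtle design choice is attaching opposite signs in the auxiliary row $z_{i,k}$ to $u_{i,k}^{+}$ and $u_{i,k}^{-}$ so that the removed matchings in the cocktail party graphs are correctly reproduced while all other interactions remain $+1$.
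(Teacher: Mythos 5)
Your proposal is correct: the factorization $A+2I=N^{\top}N$ checks out in all five cases (the sign cancellation in the matched-pair case is exactly what is needed), and positive semidefiniteness gives $\lambda_1\ge -2$. This is essentially the standard argument that the paper leaves to the citation of Hoffman: the columns of your $N$ are precisely the vectors $\bm{e}_{v_i}+\bm{e}_{v_j}$ and $\bm{e}_{v_i}\pm\bm{e}_{z_{i,k}}$ of the root system $D_n$, i.e.\ a $D_n$-representation of the generalized line graph, which is the same mechanism the paper later records in \cref{thm:cgss}(a).
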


\begin{theorem}[Cvetkovi\'{c}, Doob, and Simi\'{c}~\cite{CDS80,CDS81}, and Rao, Singhi, and Vijayan~\cite{RSV81}] \label{thm:forb-char-generalized-line}
    The minimal forbidden subgraphs for the family $\D_\infty$ of generalized line graphs are listed in \cref{fig:minimal-forb-1}. \qed
\end{theorem}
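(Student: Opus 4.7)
The plan is to deduce this classification from the structural theory of graphs with smallest eigenvalue at least $-2$ (the Cameron-Goethals-Seidel-Shult root-system theorem, referred to as \cref{thm:cgss} in the introduction) together with a finite case analysis. Every minimal forbidden subgraph $F$ for $\D_\infty$ falls into one of two types according to where $\la_1(F)$ lies relative to $-2$, and I would handle the two types separately.

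First I would dispose of the type where $\la_1(F) < -2$. By \cref{thm:generalized-line-smallest-ev}, any graph with smallest eigenvalue less than $-2$ fails to be a generalized line graph, so such an $F$ is simultaneously a forbidden subgraph for $\D_\infty$ and for $\G(2)$. The minimal ones therefore lie among the minimal forbidden subgraphs for $\G(2)$, a finite list known explicitly from the Bussemaker-Neumaier enumeration. One checks for each candidate whether all proper subgraphs are actually generalized line graphs (rather than merely in $\G(2)$), which is a bounded computation.

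Next I would address the type where $\la_1(F) \ge -2$ but $F \notin \D_\infty$. Applying the Cameron-Goethals-Seidel-Shult theorem, any connected component of such an $F$ admits a representation by vectors at mutual angles $\pi/2$ and $\pi/3$ either in the root system $D_n$ (in which case that component is a generalized line graph) or in the exceptional root system $E_8$. Since $F$ itself is not a generalized line graph, at least one component falls in the $E_8$ case, and a standard argument shows its order is at most a fixed constant (traditionally $36$). This bounds the number of vertices in any type-(b) minimal forbidden subgraph by an absolute constant, so there are only finitely many candidates to sift through.

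The main obstacle is the explicit enumeration in the second step: reducing the $E_8$-realizable graphs, cataloguing those that are not generalized line graphs, and then trimming down to minimal ones (i.e., verifying that removing any single vertex restores membership in $\D_\infty$). This is exactly the combinatorial heart of the Cvetkovi\'{c}-Doob-Simi\'{c} and Rao-Singhi-Vijayan analyses: the bound from root systems is qualitative, but producing \cref{fig:minimal-forb-1} precisely requires either a computer search over vector configurations in $E_8$ or a careful hand-tuned case analysis exploiting the structure of $D_n$-representations versus $E_8$-representations. The finiteness claim, however, is a clean consequence of the root-system classification and the Bussemaker-Neumaier bound on minimal forbidden subgraphs for $\G(2)$.
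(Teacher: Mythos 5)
This statement is quoted from the literature: the paper gives no proof at all (note the \qed in the statement), so there is nothing of the paper's to match your argument against, and any self-contained proof would have to carry the full classification. Measured against that bar, your proposal has a genuine gap: the theorem asserts that the minimal forbidden subgraphs for $\D_\infty$ are \emph{exactly} the $31$ graphs of \cref{fig:minimal-forb-1}, whereas your two-case reduction only yields \emph{finiteness} plus a search space. You say yourself that the "combinatorial heart" --- enumerating the candidates, checking that each of the $31$ graphs is not a generalized line graph, that every proper subgraph of each is one, and that no further minimal example exists --- is deferred to a computer search or to the very papers being cited. Until that enumeration is actually performed, the statement is not proved; what you have is a correct plan for why the problem is finite, which is the easy part (and is essentially \cref{thm:cgss} plus the $36$-vertex bound for $E_8$-representable graphs and the $10$-vertex bound for minimal forbidden subgraphs of $\G(2)$).

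Two further points on the reduction itself. First, your Case~(a) (where $\la_1(F)<-2$) is logically sound --- every proper subgraph of $F$ is a generalized line graph, hence in $\G(2)$ by \cref{thm:generalized-line-smallest-ev}, so $F$ is a minimal forbidden subgraph for $\G(2)$ --- but carrying it out requires the complete list of the $1812$ minimal forbidden subgraphs for $\G(2)$, which the paper explicitly notes is not readily accessible; this is precisely the reason the authors work with the $31$-graph list for $\D_\infty$ rather than the other way around, so your reduction runs in the direction that is computationally least convenient. Second, your Case~(b) implicitly assumes minimal forbidden subgraphs for $\D_\infty$ are connected (so that \cref{thm:cgss} applies componentwise and the $E_8$ bound kicks in); this is true because $\D_\infty$ is closed under disjoint unions, but it should be stated, since the representation-by-$D_n$ criterion in \cref{thm:cgss}(a) is the thing that makes the componentwise argument legitimate.
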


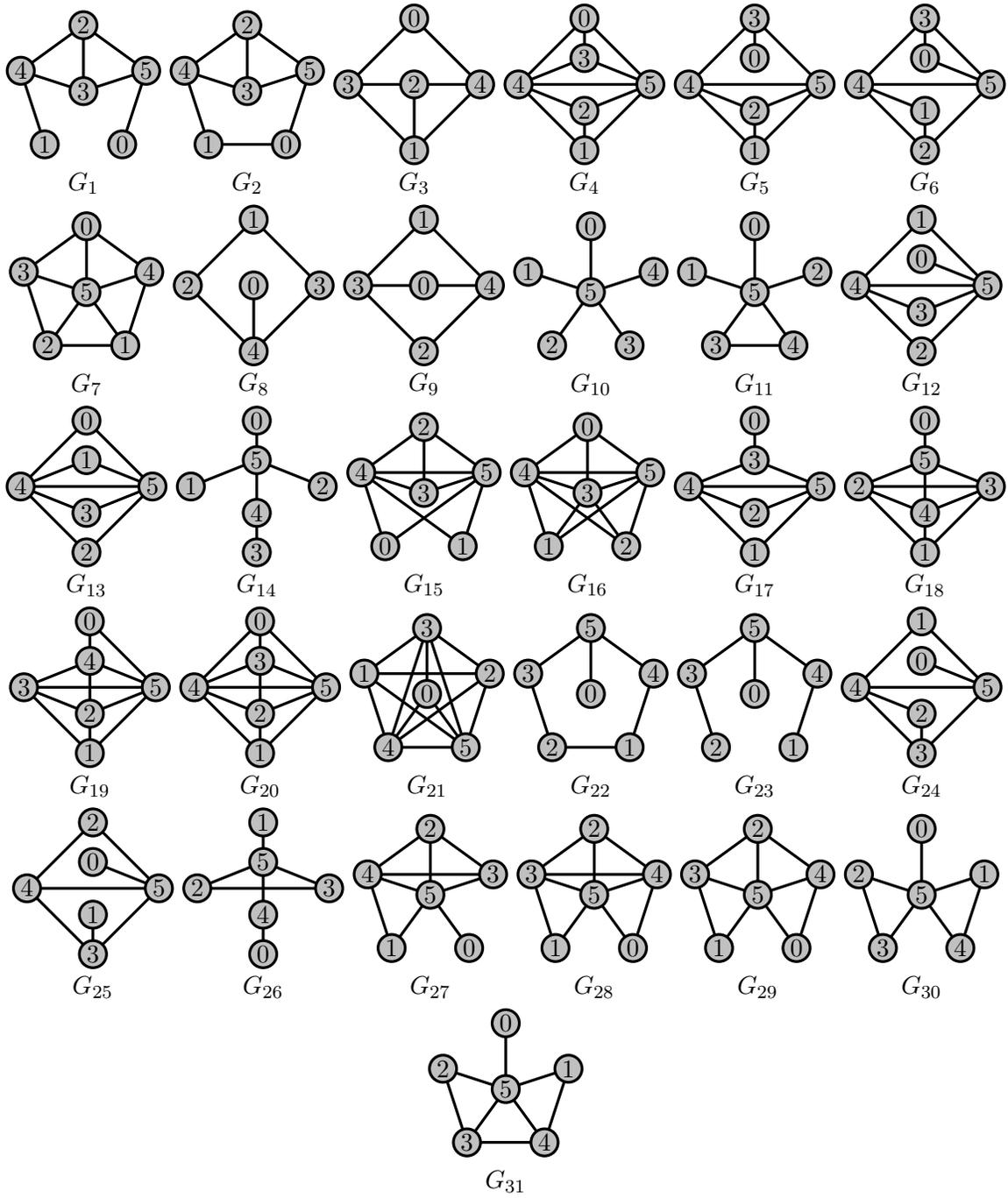
\begin{figure}
    \centering
    \begin{tikzpicture}[very thick, baseline=(v0.base)]
    \coordinate (v0) at (0,0);
    \coordinate (v1) at (0,.90450849718);
    \coordinate (v2) at (-.95105651629,0);
    \coordinate (v3) at (0,-.90450849718);
    \coordinate (v4) at (.95105651629,0);
    \draw (v0) -- (v3);
    \draw (v1) -- (v2);
    \draw (v1) -- (v4);
    \draw (v2) -- (v3);
    \draw (v3) -- (v4);
    \node[circ] at (v0) {0};
    \node[circ] at (v1) {1};
    \node[circ] at (v2) {2};
    \node[circ] at (v3) {3};
    \node[circ] at (v4) {4};
    \node at (0,-1.4) {$G_{1}$};
    \end{tikzpicture}
    \begin{tikzpicture}[very thick, baseline=(v0.base)]
    \coordinate (v0) at (0,0);
    \coordinate (v1) at (0,.90450849718);
    \coordinate (v2) at (-.95105651629,0);
    \coordinate (v3) at (0,-.90450849718);
    \coordinate (v4) at (.95105651629,0);
    \draw (v0) -- (v2);
    \draw (v0) -- (v4);
    \draw (v1) -- (v2);
    \draw (v1) -- (v4);
    \draw (v2) -- (v3);
    \draw (v3) -- (v4);
    \node[circ] at (v0) {0};
    \node[circ] at (v1) {1};
    \node[circ] at (v2) {2};
    \node[circ] at (v3) {3};
    \node[circ] at (v4) {4};
    \node at (0,-1.4) {$G_{2}$};
    \end{tikzpicture}
    \begin{tikzpicture}[very thick, baseline=(v0.base)]
    \coordinate (v0) at (0,0);
    \coordinate (v1) at (0,.90450849718);
    \coordinate (v2) at (-.95105651629,0);
    \coordinate (v3) at (0,-.90450849718);
    \coordinate (v4) at (.95105651629,0);
    \draw (v0) -- (v2);
    \draw (v0) -- (v3);
    \draw (v0) -- (v4);
    \draw (v1) -- (v2);
    \draw (v1) -- (v4);
    \draw (v2) -- (v3);
    \draw (v3) -- (v4);
    \node[circ] at (v0) {0};
    \node[circ] at (v1) {1};
    \node[circ] at (v2) {2};
    \node[circ] at (v3) {3};
    \node[circ] at (v4) {4};
    \node at (0,-1.4) {$G_{3}$};
    \end{tikzpicture}
    \begin{tikzpicture}[very thick, baseline=(v.base)]
    \coordinate (v) at (0,.09549150281);
    \coordinate (v0) at (0,0);
    \path (v0) + (90:1) coordinate (v1);
    \path (v0) + (162:1) coordinate (v2);
    \path (v0) + (234:1) coordinate (v3);
    \path (v0) + (306:1) coordinate (v4);
    \path (v0) + (378:1) coordinate (v5);
    \draw (v0) -- (v1);
    \draw (v0) -- (v2);
    \draw (v0) -- (v5);
    \draw (v2) -- (v3);
    \draw (v4) -- (v5);
    \node[circ] at (v0) {0};
    \node[circ] at (v1) {1};
    \node[circ] at (v2) {2};
    \node[circ] at (v3) {3};
    \node[circ] at (v4) {4};
    \node[circ] at (v5) {5};
    \node at (0,-1.30450849719) {$G_{4}$};
    \end{tikzpicture}
    \begin{tikzpicture}[very thick, baseline=(v.base)]
    \coordinate (v) at (0,.09549150281);
    \coordinate (v0) at (0,0);
    \path (v0) + (90:1) coordinate (v1);
    \path (v0) + (162:1) coordinate (v2);
    \path (v0) + (234:1) coordinate (v3);
    \path (v0) + (306:1) coordinate (v4);
    \path (v0) + (378:1) coordinate (v5);
    \draw (v0) -- (v1);
    \draw (v0) -- (v2);
    \draw (v0) -- (v3);
    \draw (v0) -- (v5);
    \draw (v3) -- (v4);
    \node[circ] at (v0) {0};
    \node[circ] at (v1) {1};
    \node[circ] at (v2) {2};
    \node[circ] at (v3) {3};
    \node[circ] at (v4) {4};
    \node[circ] at (v5) {5};
    \node at (0,-1.30450849719) {$G_{5}$};
    \end{tikzpicture}
    \begin{tikzpicture}[very thick, baseline=(v.base)]
    \coordinate (v) at (0,.09549150281);
    \coordinate (v0) at (0,0);
    \path (v0) + (90:1) coordinate (v1);
    \path (v0) + (162:1) coordinate (v2);
    \path (v0) + (234:1) coordinate (v3);
    \path (v0) + (306:1) coordinate (v4);
    \path (v0) + (378:1) coordinate (v5);
    \draw (v0) -- (v1);
    \draw (v0) -- (v2);
    \draw (v0) -- (v3);
    \draw (v0) -- (v4);
    \draw (v0) -- (v5);
    \node[circ] at (v0) {0};
    \node[circ] at (v1) {1};
    \node[circ] at (v2) {2};
    \node[circ] at (v3) {3};
    \node[circ] at (v4) {4};
    \node[circ] at (v5) {5};
    \node at (0,-1.30450849719) {$G_{6}$};
    \end{tikzpicture}
    \begin{tikzpicture}[very thick, baseline=(v.base)]
    \coordinate (v) at (0,.09549150281);
    \coordinate (v0) at (0,0);
    \path (v0) + (90:1) coordinate (v1);
    \path (v0) + (162:1) coordinate (v2);
    \path (v0) + (234:1) coordinate (v3);
    \path (v0) + (306:1) coordinate (v4);
    \path (v0) + (378:1) coordinate (v5);
    \draw (v0) -- (v1);
    \draw (v1) -- (v2);
    \draw (v1) -- (v5);
    \draw (v2) -- (v3);
    \draw (v3) -- (v4);
    \draw (v4) -- (v5);
    \node[circ] at (v0) {0};
    \node[circ] at (v1) {1};
    \node[circ] at (v2) {2};
    \node[circ] at (v3) {3};
    \node[circ] at (v4) {4};
    \node[circ] at (v5) {5};
    \node at (0,-1.30450849719) {$G_{7}$};
    \end{tikzpicture}
    \begin{tikzpicture}[very thick, baseline=(v.base)]
    \coordinate (v) at (0,.09549150281);
    \coordinate (v0) at (0,0);
    \path (v0) + (90:1) coordinate (v1);
    \path (v0) + (162:1) coordinate (v2);
    \path (v0) + (234:1) coordinate (v3);
    \path (v0) + (306:1) coordinate (v4);
    \path (v0) + (378:1) coordinate (v5);
    \draw (v0) -- (v2);
    \draw (v0) -- (v3);
    \draw (v0) -- (v4);
    \draw (v0) -- (v5);
    \draw (v1) -- (v2);
    \draw (v3) -- (v4);
    \node[circ] at (v0) {0};
    \node[circ] at (v1) {1};
    \node[circ] at (v2) {2};
    \node[circ] at (v3) {3};
    \node[circ] at (v4) {4};
    \node[circ] at (v5) {5};
    \node at (0,-1.30450849719) {$G_{8}$};
    \end{tikzpicture}
    \begin{tikzpicture}[very thick, baseline=(v.base)]
    \coordinate (v) at (0,.09549150281);
    \coordinate (v0) at (0,0);
    \path (v0) + (90:1) coordinate (v1);
    \path (v0) + (162:1) coordinate (v2);
    \path (v0) + (234:1) coordinate (v3);
    \path (v0) + (306:1) coordinate (v4);
    \path (v0) + (378:1) coordinate (v5);
    \draw (v0) -- (v1);
    \draw (v0) -- (v2);
    \draw (v0) -- (v3);
    \draw (v0) -- (v4);
    \draw (v0) -- (v5);
    \draw (v3) -- (v4);
    \node[circ] at (v0) {0};
    \node[circ] at (v1) {1};
    \node[circ] at (v2) {2};
    \node[circ] at (v3) {3};
    \node[circ] at (v4) {4};
    \node[circ] at (v5) {5};
    \node at (0,-1.30450849719) {$G_{9}$};
    \end{tikzpicture}
    \begin{tikzpicture}[very thick, baseline=(v.base)]
    \coordinate (v) at (0,.09549150281);
    \coordinate (v0) at (0,0);
    \path (v0) + (90:1) coordinate (v1);
    \path (v0) + (162:1) coordinate (v2);
    \path (v0) + (234:1) coordinate (v3);
    \path (v0) + (306:1) coordinate (v4);
    \path (v0) + (378:1) coordinate (v5);
    \draw (v0) -- (v1);
    \draw (v0) -- (v2);
    \draw (v0) -- (v5);
    \draw (v1) -- (v2);
    \draw (v1) -- (v5);
    \draw (v2) -- (v3);
    \draw (v4) -- (v5);
    \node[circ] at (v0) {0};
    \node[circ] at (v1) {1};
    \node[circ] at (v2) {2};
    \node[circ] at (v3) {3};
    \node[circ] at (v4) {4};
    \node[circ] at (v5) {5};
    \node at (0,-1.30450849719) {$G_{10}$};
    \end{tikzpicture}
    \begin{tikzpicture}[very thick, baseline=(v.base)]
    \coordinate (v) at (0,.09549150281);
    \coordinate (v0) at (0,0);
    \path (v0) + (90:1) coordinate (v1);
    \path (v0) + (162:1) coordinate (v2);
    \path (v0) + (234:1) coordinate (v3);
    \path (v0) + (306:1) coordinate (v4);
    \path (v0) + (378:1) coordinate (v5);
    \draw (v0) -- (v1);
    \draw (v0) -- (v2);
    \draw (v0) -- (v3);
    \draw (v0) -- (v5);
    \draw (v1) -- (v2);
    \draw (v1) -- (v5);
    \draw (v4) -- (v5);
    \node[circ] at (v0) {0};
    \node[circ] at (v1) {1};
    \node[circ] at (v2) {2};
    \node[circ] at (v3) {3};
    \node[circ] at (v4) {4};
    \node[circ] at (v5) {5};
    \node at (0,-1.30450849719) {$G_{11}$};
    \end{tikzpicture}
    \begin{tikzpicture}[very thick, baseline=(v.base)]
    \coordinate (v) at (0,.09549150281);
    \coordinate (v0) at (0,0);
    \path (v0) + (90:1) coordinate (v1);
    \path (v0) + (162:1) coordinate (v2);
    \path (v0) + (234:1) coordinate (v3);
    \path (v0) + (306:1) coordinate (v4);
    \path (v0) + (378:1) coordinate (v5);
    \draw (v0) -- (v1);
    \draw (v0) -- (v2);
    \draw (v0) -- (v3);
    \draw (v0) -- (v4);
    \draw (v0) -- (v5);
    \draw (v2) -- (v3);
    \draw (v4) -- (v5);
    \node[circ] at (v0) {0};
    \node[circ] at (v1) {1};
    \node[circ] at (v2) {2};
    \node[circ] at (v3) {3};
    \node[circ] at (v4) {4};
    \node[circ] at (v5) {5};
    \node at (0,-1.30450849719) {$G_{12}$};
    \end{tikzpicture}
    \begin{tikzpicture}[very thick, baseline=(v.base)]
    \coordinate (v) at (0,.09549150281);
    \coordinate (v0) at (0,0);
    \path (v0) + (90:1) coordinate (v1);
    \path (v0) + (162:1) coordinate (v2);
    \path (v0) + (234:1) coordinate (v3);
    \path (v0) + (306:1) coordinate (v4);
    \path (v0) + (378:1) coordinate (v5);
    \draw (v0) -- (v1);
    \draw (v0) -- (v2);
    \draw (v0) -- (v5);
    \draw (v1) -- (v2);
    \draw (v1) -- (v5);
    \draw (v2) -- (v3);
    \draw (v3) -- (v4);
    \draw (v4) -- (v5);
    \node[circ] at (v0) {0};
    \node[circ] at (v1) {1};
    \node[circ] at (v2) {2};
    \node[circ] at (v3) {3};
    \node[circ] at (v4) {4};
    \node[circ] at (v5) {5};
    \node at (0,-1.30450849719) {$G_{13}$};
    \end{tikzpicture}
    \begin{tikzpicture}[very thick, baseline=(v.base)]
    \coordinate (v) at (0,.09549150281);
    \coordinate (v0) at (0,0);
    \path (v0) + (90:1) coordinate (v1);
    \path (v0) + (162:1) coordinate (v2);
    \path (v0) + (234:1) coordinate (v3);
    \path (v0) + (306:1) coordinate (v4);
    \path (v0) + (378:1) coordinate (v5);
    \draw (v0) -- (v3);
    \draw (v1) -- (v2);
    \draw (v1) -- (v3);
    \draw (v1) -- (v4);
    \draw (v1) -- (v5);
    \draw (v2) -- (v3);
    \draw (v3) -- (v4);
    \draw (v4) -- (v5);
    \node[circ] at (v0) {0};
    \node[circ] at (v1) {1};
    \node[circ] at (v2) {2};
    \node[circ] at (v3) {3};
    \node[circ] at (v4) {4};
    \node[circ] at (v5) {5};
    \node at (0,-1.30450849719) {$G_{14}$};
    \end{tikzpicture}
    \begin{tikzpicture}[very thick, baseline=(v.base)]
    \coordinate (v) at (0,.09549150281);
    \coordinate (v0) at (0,0);
    \path (v0) + (90:1) coordinate (v1);
    \path (v0) + (162:1) coordinate (v2);
    \path (v0) + (234:1) coordinate (v3);
    \path (v0) + (306:1) coordinate (v4);
    \path (v0) + (378:1) coordinate (v5);
    \draw (v0) -- (v2);
    \draw (v0) -- (v5);
    \draw (v1) -- (v2);
    \draw (v1) -- (v5);
    \draw (v2) -- (v3);
    \draw (v2) -- (v5);
    \draw (v3) -- (v4);
    \draw (v3) -- (v5);
    \node[circ] at (v0) {0};
    \node[circ] at (v1) {1};
    \node[circ] at (v2) {2};
    \node[circ] at (v3) {3};
    \node[circ] at (v4) {4};
    \node[circ] at (v5) {5};
    \node at (0,-1.30450849719) {$G_{15}$};
    \end{tikzpicture}
    \begin{tikzpicture}[very thick, baseline=(v.base)]
    \coordinate (v) at (0,.09549150281);
    \coordinate (v0) at (0,0);
    \path (v0) + (90:1) coordinate (v1);
    \path (v0) + (162:1) coordinate (v2);
    \path (v0) + (234:1) coordinate (v3);
    \path (v0) + (306:1) coordinate (v4);
    \path (v0) + (378:1) coordinate (v5);
    \draw (v0) -- (v1);
    \draw (v0) -- (v2);
    \draw (v0) -- (v3);
    \draw (v0) -- (v4);
    \draw (v0) -- (v5);
    \draw (v2) -- (v3);
    \draw (v3) -- (v4);
    \draw (v4) -- (v5);
    \node[circ] at (v0) {0};
    \node[circ] at (v1) {1};
    \node[circ] at (v2) {2};
    \node[circ] at (v3) {3};
    \node[circ] at (v4) {4};
    \node[circ] at (v5) {5};
    \node at (0,-1.30450849719) {$G_{16}$};
    \end{tikzpicture}
    \begin{tikzpicture}[very thick, baseline=(v.base)]
    \coordinate (v) at (0,.09549150281);
    \coordinate (v0) at (0,0);
    \path (v0) + (90:1) coordinate (v1);
    \path (v0) + (162:1) coordinate (v2);
    \path (v0) + (234:1) coordinate (v3);
    \path (v0) + (306:1) coordinate (v4);
    \path (v0) + (378:1) coordinate (v5);
    \draw (v0) -- (v2);
    \draw (v0) -- (v5);
    \draw (v1) -- (v2);
    \draw (v1) -- (v5);
    \draw (v2) -- (v3);
    \draw (v2) -- (v5);
    \draw (v3) -- (v5);
    \draw (v4) -- (v5);
    \node[circ] at (v0) {0};
    \node[circ] at (v1) {1};
    \node[circ] at (v2) {2};
    \node[circ] at (v3) {3};
    \node[circ] at (v4) {4};
    \node[circ] at (v5) {5};
    \node at (0,-1.30450849719) {$G_{17}$};
    \end{tikzpicture}
    \begin{tikzpicture}[very thick, baseline=(v.base)]
    \coordinate (v) at (0,.09549150281);
    \coordinate (v0) at (0,0);
    \path (v0) + (90:1) coordinate (v1);
    \path (v0) + (162:1) coordinate (v2);
    \path (v0) + (234:1) coordinate (v3);
    \path (v0) + (306:1) coordinate (v4);
    \path (v0) + (378:1) coordinate (v5);
    \draw (v0) -- (v2);
    \draw (v0) -- (v3);
    \draw (v0) -- (v4);
    \draw (v0) -- (v5);
    \draw (v1) -- (v2);
    \draw (v1) -- (v5);
    \draw (v2) -- (v3);
    \draw (v2) -- (v5);
    \draw (v3) -- (v4);
    \node[circ] at (v0) {0};
    \node[circ] at (v1) {1};
    \node[circ] at (v2) {2};
    \node[circ] at (v3) {3};
    \node[circ] at (v4) {4};
    \node[circ] at (v5) {5};
    \node at (0,-1.30450849719) {$G_{18}$};
    \end{tikzpicture}
    \begin{tikzpicture}[very thick, baseline=(v.base)]
    \coordinate (v) at (0,.09549150281);
    \coordinate (v0) at (0,0);
    \path (v0) + (90:1) coordinate (v1);
    \path (v0) + (162:1) coordinate (v2);
    \path (v0) + (234:1) coordinate (v3);
    \path (v0) + (306:1) coordinate (v4);
    \path (v0) + (378:1) coordinate (v5);
    \draw (v0) -- (v1);
    \draw (v0) -- (v2);
    \draw (v0) -- (v3);
    \draw (v0) -- (v5);
    \draw (v1) -- (v2);
    \draw (v1) -- (v5);
    \draw (v2) -- (v3);
    \draw (v2) -- (v5);
    \draw (v3) -- (v4);
    \node[circ] at (v0) {0};
    \node[circ] at (v1) {1};
    \node[circ] at (v2) {2};
    \node[circ] at (v3) {3};
    \node[circ] at (v4) {4};
    \node[circ] at (v5) {5};
    \node at (0,-1.30450849719) {$G_{19}$};
    \end{tikzpicture}
    \begin{tikzpicture}[very thick, baseline=(v.base)]
    \coordinate (v) at (0,.09549150281);
    \coordinate (v0) at (0,0);
    \path (v0) + (90:1) coordinate (v1);
    \path (v0) + (162:1) coordinate (v2);
    \path (v0) + (234:1) coordinate (v3);
    \path (v0) + (306:1) coordinate (v4);
    \path (v0) + (378:1) coordinate (v5);
    \draw (v0) -- (v1);
    \draw (v0) -- (v2);
    \draw (v0) -- (v3);
    \draw (v0) -- (v4);
    \draw (v0) -- (v5);
    \draw (v1) -- (v2);
    \draw (v1) -- (v5);
    \draw (v2) -- (v3);
    \draw (v4) -- (v5);
    \node[circ] at (v0) {0};
    \node[circ] at (v1) {1};
    \node[circ] at (v2) {2};
    \node[circ] at (v3) {3};
    \node[circ] at (v4) {4};
    \node[circ] at (v5) {5};
    \node at (0,-1.30450849719) {$G_{20}$};
    \end{tikzpicture}
    \begin{tikzpicture}[very thick, baseline=(v.base)]
    \coordinate (v) at (0,.09549150281);
    \coordinate (v0) at (0,0);
    \path (v0) + (90:1) coordinate (v1);
    \path (v0) + (162:1) coordinate (v2);
    \path (v0) + (234:1) coordinate (v3);
    \path (v0) + (306:1) coordinate (v4);
    \path (v0) + (378:1) coordinate (v5);
    \draw (v0) -- (v1);
    \draw (v0) -- (v2);
    \draw (v0) -- (v3);
    \draw (v0) -- (v4);
    \draw (v0) -- (v5);
    \draw (v1) -- (v2);
    \draw (v1) -- (v5);
    \draw (v2) -- (v3);
    \draw (v2) -- (v5);
    \node[circ] at (v0) {0};
    \node[circ] at (v1) {1};
    \node[circ] at (v2) {2};
    \node[circ] at (v3) {3};
    \node[circ] at (v4) {4};
    \node[circ] at (v5) {5};
    \node at (0,-1.30450849719) {$G_{21}$};
    \end{tikzpicture}
    \begin{tikzpicture}[very thick, baseline=(v.base)]
    \coordinate (v) at (0,.09549150281);
    \coordinate (v0) at (0,0);
    \path (v0) + (90:1) coordinate (v1);
    \path (v0) + (162:1) coordinate (v2);
    \path (v0) + (234:1) coordinate (v3);
    \path (v0) + (306:1) coordinate (v4);
    \path (v0) + (378:1) coordinate (v5);
    \draw (v0) -- (v2);
    \draw (v0) -- (v5);
    \draw (v1) -- (v2);
    \draw (v1) -- (v5);
    \draw (v2) -- (v3);
    \draw (v2) -- (v4);
    \draw (v2) -- (v5);
    \draw (v3) -- (v5);
    \draw (v4) -- (v5);
    \node[circ] at (v0) {0};
    \node[circ] at (v1) {1};
    \node[circ] at (v2) {2};
    \node[circ] at (v3) {3};
    \node[circ] at (v4) {4};
    \node[circ] at (v5) {5};
    \node at (0,-1.30450849719) {$G_{22}$};
    \end{tikzpicture}
    \begin{tikzpicture}[very thick, baseline=(v.base)]
    \coordinate (v) at (0,.09549150281);
    \coordinate (v0) at (0,0);
    \path (v0) + (90:1) coordinate (v1);
    \path (v0) + (162:1) coordinate (v2);
    \path (v0) + (234:1) coordinate (v3);
    \path (v0) + (306:1) coordinate (v4);
    \path (v0) + (378:1) coordinate (v5);
    \draw (v0) -- (v2);
    \draw (v1) -- (v2);
    \draw (v1) -- (v3);
    \draw (v1) -- (v4);
    \draw (v1) -- (v5);
    \draw (v2) -- (v3);
    \draw (v2) -- (v4);
    \draw (v3) -- (v4);
    \draw (v3) -- (v5);
    \draw (v4) -- (v5);
    \node[circ] at (v0) {0};
    \node[circ] at (v1) {1};
    \node[circ] at (v2) {2};
    \node[circ] at (v3) {3};
    \node[circ] at (v4) {4};
    \node[circ] at (v5) {5};
    \node at (0,-1.30450849719) {$G_{23}$};
    \end{tikzpicture}
    \begin{tikzpicture}[very thick, baseline=(v.base)]
    \coordinate (v) at (0,.09549150281);
    \coordinate (v0) at (0,0);
    \path (v0) + (90:1) coordinate (v1);
    \path (v0) + (162:1) coordinate (v2);
    \path (v0) + (234:1) coordinate (v3);
    \path (v0) + (306:1) coordinate (v4);
    \path (v0) + (378:1) coordinate (v5);
    \draw (v0) -- (v1);
    \draw (v0) -- (v2);
    \draw (v0) -- (v3);
    \draw (v0) -- (v4);
    \draw (v0) -- (v5);
    \draw (v1) -- (v2);
    \draw (v1) -- (v5);
    \draw (v2) -- (v3);
    \draw (v3) -- (v4);
    \draw (v4) -- (v5);
    \node[circ] at (v0) {0};
    \node[circ] at (v1) {1};
    \node[circ] at (v2) {2};
    \node[circ] at (v3) {3};
    \node[circ] at (v4) {4};
    \node[circ] at (v5) {5};
    \node at (0,-1.30450849719) {$G_{24}$};
    \end{tikzpicture}
    \begin{tikzpicture}[very thick, baseline=(v.base)]
    \coordinate (v) at (0,.09549150281);
    \coordinate (v0) at (0,0);
    \path (v0) + (90:1) coordinate (v1);
    \path (v0) + (162:1) coordinate (v2);
    \path (v0) + (234:1) coordinate (v3);
    \path (v0) + (306:1) coordinate (v4);
    \path (v0) + (378:1) coordinate (v5);
    \draw (v0) -- (v1);
    \draw (v0) -- (v3);
    \draw (v0) -- (v4);
    \draw (v1) -- (v2);
    \draw (v1) -- (v3);
    \draw (v1) -- (v4);
    \draw (v1) -- (v5);
    \draw (v2) -- (v3);
    \draw (v3) -- (v4);
    \draw (v4) -- (v5);
    \node[circ] at (v0) {0};
    \node[circ] at (v1) {1};
    \node[circ] at (v2) {2};
    \node[circ] at (v3) {3};
    \node[circ] at (v4) {4};
    \node[circ] at (v5) {5};
    \node at (0,-1.30450849719) {$G_{25}$};
    \end{tikzpicture}
    \begin{tikzpicture}[very thick, baseline=(v.base)]
    \coordinate (v) at (0,.09549150281);
    \coordinate (v0) at (0,0);
    \path (v0) + (90:1) coordinate (v1);
    \path (v0) + (162:1) coordinate (v2);
    \path (v0) + (234:1) coordinate (v3);
    \path (v0) + (306:1) coordinate (v4);
    \path (v0) + (378:1) coordinate (v5);
    \draw (v0) -- (v1);
    \draw (v0) -- (v2);
    \draw (v0) -- (v3);
    \draw (v0) -- (v4);
    \draw (v0) -- (v5);
    \draw (v1) -- (v2);
    \draw (v1) -- (v3);
    \draw (v1) -- (v4);
    \draw (v1) -- (v5);
    \draw (v3) -- (v4);
    \node[circ] at (v0) {0};
    \node[circ] at (v1) {1};
    \node[circ] at (v2) {2};
    \node[circ] at (v3) {3};
    \node[circ] at (v4) {4};
    \node[circ] at (v5) {5};
    \node at (0,-1.30450849719) {$G_{26}$};
    \end{tikzpicture}
    \begin{tikzpicture}[very thick, baseline=(v.base)]
    \coordinate (v) at (0,.09549150281);
    \coordinate (v0) at (0,0);
    \path (v0) + (90:1) coordinate (v1);
    \path (v0) + (162:1) coordinate (v2);
    \path (v0) + (234:1) coordinate (v3);
    \path (v0) + (306:1) coordinate (v4);
    \path (v0) + (378:1) coordinate (v5);
    \draw (v0) -- (v1);
    \draw (v0) -- (v2);
    \draw (v0) -- (v3);
    \draw (v0) -- (v4);
    \draw (v1) -- (v2);
    \draw (v1) -- (v3);
    \draw (v1) -- (v4);
    \draw (v1) -- (v5);
    \draw (v2) -- (v3);
    \draw (v3) -- (v4);
    \draw (v4) -- (v5);
    \node[circ] at (v0) {0};
    \node[circ] at (v1) {1};
    \node[circ] at (v2) {2};
    \node[circ] at (v3) {3};
    \node[circ] at (v4) {4};
    \node[circ] at (v5) {5};
    \node at (0,-1.30450849719) {$G_{27}$};
    \end{tikzpicture}
    \begin{tikzpicture}[very thick, baseline=(v.base)]
    \coordinate (v) at (0,.09549150281);
    \coordinate (v0) at (0,0);
    \path (v0) + (90:1) coordinate (v1);
    \path (v0) + (162:1) coordinate (v2);
    \path (v0) + (234:1) coordinate (v3);
    \path (v0) + (306:1) coordinate (v4);
    \path (v0) + (378:1) coordinate (v5);
    \draw (v0) -- (v1);
    \draw (v0) -- (v2);
    \draw (v0) -- (v3);
    \draw (v0) -- (v4);
    \draw (v0) -- (v5);
    \draw (v1) -- (v2);
    \draw (v1) -- (v3);
    \draw (v1) -- (v4);
    \draw (v1) -- (v5);
    \draw (v2) -- (v3);
    \draw (v4) -- (v5);
    \node[circ] at (v0) {0};
    \node[circ] at (v1) {1};
    \node[circ] at (v2) {2};
    \node[circ] at (v3) {3};
    \node[circ] at (v4) {4};
    \node[circ] at (v5) {5};
    \node at (0,-1.30450849719) {$G_{28}$};
    \end{tikzpicture}
    \begin{tikzpicture}[very thick, baseline=(v.base)]
    \coordinate (v) at (0,.09549150281);
    \coordinate (v0) at (0,0);
    \path (v0) + (90:1) coordinate (v1);
    \path (v0) + (162:1) coordinate (v2);
    \path (v0) + (234:1) coordinate (v3);
    \path (v0) + (306:1) coordinate (v4);
    \path (v0) + (378:1) coordinate (v5);
    \draw (v0) -- (v1);
    \draw (v0) -- (v2);
    \draw (v0) -- (v3);
    \draw (v0) -- (v4);
    \draw (v0) -- (v5);
    \draw (v1) -- (v2);
    \draw (v1) -- (v3);
    \draw (v1) -- (v4);
    \draw (v1) -- (v5);
    \draw (v2) -- (v3);
    \draw (v3) -- (v4);
    \draw (v4) -- (v5);
    \node[circ] at (v0) {0};
    \node[circ] at (v1) {1};
    \node[circ] at (v2) {2};
    \node[circ] at (v3) {3};
    \node[circ] at (v4) {4};
    \node[circ] at (v5) {5};
    \node at (0,-1.30450849719) {$G_{29}$};
    \end{tikzpicture}
    \begin{tikzpicture}[very thick, baseline=(v.base)]
    \coordinate (v) at (0,.09549150281);
    \coordinate (v0) at (0,0);
    \path (v0) + (90:1) coordinate (v1);
    \path (v0) + (162:1) coordinate (v2);
    \path (v0) + (234:1) coordinate (v3);
    \path (v0) + (306:1) coordinate (v4);
    \path (v0) + (378:1) coordinate (v5);
    \draw (v0) -- (v1);
    \draw (v0) -- (v2);
    \draw (v0) -- (v3);
    \draw (v0) -- (v4);
    \draw (v0) -- (v5);
    \draw (v1) -- (v2);
    \draw (v1) -- (v5);
    \draw (v2) -- (v3);
    \draw (v2) -- (v4);
    \draw (v2) -- (v5);
    \draw (v3) -- (v5);
    \draw (v4) -- (v5);
    \node[circ] at (v0) {0};
    \node[circ] at (v1) {1};
    \node[circ] at (v2) {2};
    \node[circ] at (v3) {3};
    \node[circ] at (v4) {4};
    \node[circ] at (v5) {5};
    \node at (0,-1.30450849719) {$G_{30}$};
    \end{tikzpicture}
    \begin{tikzpicture}[very thick, baseline=(v.base)]
    \coordinate (v) at (0,.09549150281);
    \coordinate (v0) at (0,0);
    \path (v0) + (90:1) coordinate (v1);
    \path (v0) + (162:1) coordinate (v2);
    \path (v0) + (234:1) coordinate (v3);
    \path (v0) + (306:1) coordinate (v4);
    \path (v0) + (378:1) coordinate (v5);
    \draw (v0) -- (v1);
    \draw (v0) -- (v2);
    \draw (v0) -- (v3);
    \draw (v0) -- (v4);
    \draw (v0) -- (v5);
    \draw (v1) -- (v2);
    \draw (v1) -- (v5);
    \draw (v2) -- (v3);
    \draw (v2) -- (v4);
    \draw (v2) -- (v5);
    \draw (v3) -- (v4);
    \draw (v3) -- (v5);
    \draw (v4) -- (v5);
    \node[circ] at (v0) {0};
    \node[circ] at (v1) {1};
    \node[circ] at (v2) {2};
    \node[circ] at (v3) {3};
    \node[circ] at (v4) {4};
    \node[circ] at (v5) {5};
    \node at (0,-1.30450849719) {$G_{31}$};
    \end{tikzpicture}
    \caption{Minimal forbidden subgraphs for $\D_\infty$.} \label{fig:minimal-forb-1}
\end{figure}

The key observation for the proof of \cref{thm:main1-2} is that for every minimal forbidden subgraph $F$ for $\D_\infty$, there exists an extension family of $F$ disjoint from $\Gl$. We first deal with path extensions and clique extensions.

\begin{lemma} \label{lem:computation-1}
    For every minimal forbidden subgraph $F$ in \cref{fig:minimal-forb-1} for the family $\D_\infty$ of generalized line graphs, and every nonempty vertex subset $A$ of $F$, the path extensions and the clique extensions of $F$ satisfy
    \[
        \lim_{\ell\to\infty} \la_1(F, A, \ell) \le -\la^*
        \quad \text{and} \quad
        \lim_{m\to\infty} \la_1(F, A, K_m) < -\la^*.
    \]
    Moreover, the equality holds in the first inequality if and only if $F = G_4$ and $A \in \sset{\sset{3}, \sset{4}}$.
\end{lemma}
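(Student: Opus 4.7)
The plan is to reduce the lemma to a finite computational verification, one case per pair $(F, A)$ where $F$ ranges over the $31$ graphs of \cref{fig:minimal-forb-1} and $A$ over the nonempty vertex subsets of $F$. The first step is to invoke Cauchy interlacing to establish monotonicity of both sequences. Removing the endpoint $v_\ell$ of the attached path in $(F, A, \ell)$ leaves $(F, A, \ell - 1)$ as an induced subgraph, and removing any one vertex of $K_m$ in $(F, A, K_m)$ leaves $(F, A, K_{m-1})$ as an induced subgraph. Hence $\ell \mapsto \la_1(F, A, \ell)$ and $m \mapsto \la_1(F, A, K_m)$ are nonincreasing, so both limits exist in $[-\infty, \la_1(F)]$. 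It therefore suffices to exhibit, for each pair $(F, A)$, explicit finite parameters $\ell_0, m_0$ for which $\la_1(F, A, \ell_0) \le -\la^*$ and $\la_1(F, A, K_{m_0}) \le -\la^*$.

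The second step is to carry out these comparisons. There are at most $31 \cdot (2^6 - 1)$ pairs $(F, A)$ (far fewer modulo automorphisms of $F$), and for each pair I would compute the smallest eigenvalue of a single small finite extension and compare to $-\la^*$; the inequalities are certified by the interval arithmetic framework detailed in \cref{sec:numerics}. For a conceptual shortcut, in any case where $(F, A, \ell)$ or $(F, A, K_m)$ can be seen to contain the graph $E_{2, n}$ of \cref{fig:e2n} as an induced subgraph for arbitrarily large $n$, I would instead invoke \cref{lem:e2n} together with Cauchy interlacing to conclude $\lim \la_1 \le \lim \la_1(E_{2, n}) = -\la^*$ directly, bypassing numerics.

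The main obstacle is not conceptual but a matter of rigor in the numerical step: since $\la^*$ is an irrational algebraic number of degree three (the plastic constant), the threshold comparisons must be performed with certified arithmetic rather than floating-point heuristics. The real difficulty is concentrated on the borderline pairs $(F, A)$ whose limiting smallest eigenvalue equals $-\la^*$ exactly; for those pairs, no choice of finite $\ell_0$ or $m_0$ achieves a strict inequality, so one is essentially forced to detect an embedded $E_{2, n}$ and appeal to \cref{lem:e2n}. The combinatorial search for such embeddings across all $31$ graphs and all nonempty $A$, together with the validated numerics for the remaining (strict) cases, is the bulk of the technical work.
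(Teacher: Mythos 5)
Your proposal follows essentially the same route as the paper: reduce to finitely many certified comparisons via interlacing monotonicity in $\ell$ and $m$ (the paper's appendix does exactly this, using exact rational arithmetic against the rational threshold $-101/50 < -\la^*$ rather than interval arithmetic against $-\la^*$ itself), and dispose of the only borderline cases --- $F = G_{23}$ with $A = \{1\}$ or $\{2\}$, where $(F, A, \ell)$ is isomorphic to $E_{2,\ell+3}$ --- by \cref{lem:e2n}, just as you suggest. The only quibble is cosmetic: $\la^*$ is not the plastic constant itself but $\be^{1/2}+\be^{-1/2}$, a point that does not affect your argument.
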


We prove \cref{lem:computation-1} under computer assistance in \cref{sec:numerics}. The next result takes care of path-clique extensions.

\begin{lemma} \label{lem:path-clique-extension}
    Suppose that $A$ is a nonempty vertex subset of a graph $F$ and $\la \ge 2$. If the path extensions of $F$ satisfy
    \[
        \lim_{\ell\to\infty} \la_1(F, A, \ell) < -\la,
    \]
    then there exists $m \in \N^+$ such that the path-clique extensions of $F$ satisfy
    \[
        \la_1(F, A, \ell, K_m) < -\la \text{ for every }\ell \in \N.
    \]
\end{lemma}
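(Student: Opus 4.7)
The plan is to split the analysis into two regimes of $\ell$. Write $\mu^{*} := \lim_{\ell\to\infty} \la_1(F, A, \ell)$, which by hypothesis satisfies $\mu^{*} < -\la$. The sequence $\la_1(F, A, \ell)$ is non-increasing in $\ell$ (by Cauchy interlacing applied to the nested induced subgraphs $(F, A, \ell) \subseteq (F, A, \ell+1)$), so there is an index $\ell_0$ with $\la_1(F, A, \ell+1) < -\la$ for every $\ell \ge \ell_0$.

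For $\ell \ge \ell_0$ the bound is immediate: any single clique vertex of $(F, A, \ell, K_m)$ plays the role of $v_{\ell+1}$, so $(F, A, \ell+1)$ is an induced subgraph and interlacing gives $\la_1(F, A, \ell, K_m) \le \la_1(F, A, \ell+1) < -\la$ for every $m \ge 1$. It therefore suffices to handle the finitely many values $\ell \in \{0, 1, \dots, \ell_0 - 1\}$, for each of which I would produce some $m_\ell$ with $\la_1(F, A, \ell, K_{m_\ell}) < -\la$; the desired global $m$ is then the maximum of all these.

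For each such small $\ell$, the plan is to show $\lim_{m \to \infty} \la_1(F, A, \ell, K_m) \le \mu^{*}$. Monotonicity in $m$ (since $(F, A, \ell, K_{m'})$ sits inside $(F, A, \ell, K_m)$ for $m' \le m$) then yields the required $m_\ell$. To compute the limit, apply the equitable partition with parts the singletons $\{v\}$ for $v \in V(F) \cup \{v_0, \dots, v_\ell\}$ together with the block $V(K_m)$. Eliminating the clique coordinate from the eigenvalue equation and sending $m \to \infty$ shows that the bounded eigenvalues of the quotient matrix converge to those of
\[
H_\ell := A(F, A, \ell) - e_{v_\ell} e_{v_\ell}^T,
\]
obtained by subtracting $1$ from the $(v_\ell, v_\ell)$ entry of the adjacency matrix. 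One eigenvalue of the quotient matrix diverges to $+\infty$, and the $-1$ eigenvalues coming from the $S_m$-antisymmetric part on $V(K_m)$ are harmless because $\la \ge 2 > 1$; thus $\lim_{m\to\infty}\la_1(F, A, \ell, K_m) = \la_1(H_\ell)$.

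The crux is proving $\la_1(H_\ell) \le \mu^{*}$ via Rayleigh quotients. For each $\ell' > \ell$, let $u$ be a unit $\la_1$-eigenvector of $(F, A, \ell')$ and $\tilde u$ its restriction to $V(F) \cup \{v_0, \dots, v_\ell\}$. Using the eigenvalue equation and the observation that only $v_\ell$ loses a neighbor under restriction, a direct expansion yields
\[
\tilde u^{T} H_\ell \tilde u = \la_1(F, A, \ell')\,\|\tilde u\|^2 - u_{v_\ell}(u_{v_{\ell+1}} + u_{v_\ell}).
\]
Solving the recurrence $u_{v_{k+1}} = \la_1(F, A, \ell')\,u_{v_k} - u_{v_{k-1}}$ on the path, with the right-end boundary condition $u_{v_{\ell'-1}} = \la_1(F, A, \ell')\,u_{v_{\ell'}}$, shows that the coefficient of the growing mode $r_-$ decays like $(r_+/r_-)^{\ell'+1}$, so the ratio $u_{v_{\ell+1}}/u_{v_\ell}$ tends to the decaying root $r_+ \in (-1, 0)$ of $r^2 - \mu^{*} r + 1 = 0$ as $\ell'\to\infty$. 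Consequently $u_{v_\ell}(u_{v_{\ell+1}} + u_{v_\ell}) \ge 0$ for all large $\ell'$, making the Rayleigh quotient of $\tilde u$ with respect to $H_\ell$ at most $\la_1(F, A, \ell')$; taking $\ell'\to\infty$ proves $\la_1(H_\ell) \le \mu^{*} < -\la$. The main technical obstacle is exactly this boundary analysis: isolating the decaying mode from the growing one and verifying that the restriction $\tilde u$ is nonzero (which follows by back-propagating the path recurrence unless $u$ vanishes on $V(F)$, a case handled by a separate eigenvector).
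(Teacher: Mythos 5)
Your proposal is correct in substance, but it takes a genuinely different route from the paper's at the key step. Both arguments reduce, via interlacing, to the finitely many $\ell<\ell_0$ and then analyze $m\to\infty$; the difference is how the limit is controlled. The paper stays entirely finite: it takes the eigenvector $x$ of the single graph $(F,A,\ell_0)$ for $\la_0=-\la_1(F,A,\ell_0)>\la$, uses the explicit test vector with clique entries $x_{\ell+1}/m$, shows the Rayleigh quotient tends to $-\la_0+(x_\ell+x_{\ell+1})x_{\ell+1}/\sum_v x_v^2$, and proves the sign condition $(x_i+x_{i+1})x_{i+1}\le 0$ by a two-line downward induction along the path using only $\la_0>2$ --- no asymptotics in a second parameter. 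You instead identify $\lim_m \la_1(F,A,\ell,K_m)$ with $\la_1(H_\ell)$ for the diagonal perturbation $H_\ell=A(F,A,\ell)-e_{v_\ell}e_{v_\ell}^T$ (a Hoffman-type limit via the equitable partition; the $-1$ eigenvalues are indeed irrelevant since $\la\ge 2$), and then bound $\la_1(H_\ell)\le\lim_{\ell'}\la_1(F,A,\ell')$ by restricting $\la_1$-eigenvectors of $(F,A,\ell')$ and separating the growing and decaying modes of the three-term recurrence to get the sign of $u_{v_\ell}(u_{v_\ell}+u_{v_{\ell+1}})$ for large $\ell'$; your identity $\tilde u^T H_\ell\tilde u=\la_1(F,A,\ell')\|\tilde u\|^2-u_{v_\ell}(u_{v_{\ell+1}}+u_{v_\ell})$ and the boundary relation $\beta/\alpha=-(r_+/r_-)^{\ell'+1}$ both check out. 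Your route buys a clean conceptual limit object ($H_\ell$) and an eigenvector-independent description of the path profile; the paper's buys elementarity and robustness (the sign inequality holds for the eigenvector actually in hand, with no mode analysis or second limit, which is also why it transfers verbatim to the signed setting in Lemma 22). Two loose ends you should tidy, neither fatal: (i) for the lemma you only need $\limsup_m\la_1(F,A,\ell,K_m)\le\la_1(H_\ell)$, which follows from one test vector (clique entries $-u_{v_\ell}/m$), so full spectral convergence of the quotient matrices need not be argued; (ii) the degenerate case is not that $u$ vanishes on $V(F)$ but that a $\la_1$-eigenvector of $(F,A,\ell')$ vanishes identically on the path (possible if $\la_1(F,A,\ell')$ is also an eigenvalue of $F$ with eigenvector summing to zero on $A$); then the ratio analysis is vacuous, but the correction term is $0$ and the Rayleigh bound holds anyway, while $\tilde u\neq 0$ always follows by propagating the recurrence along the path.
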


\begin{proof}
    Pick $\ell_0 \in \N$ such that $\la_1(F, A, \ell) < -\la$ for every $\ell \ge \ell_0$. Clearly $\la_1(F, A, \ell, K_m) < -\la$ for every $\ell \ge \ell_0$ and every $m \in \N^+$. It suffices to show that for every $\ell \in \sset{0, \dots, \ell_0 - 1}$ there exists $m \in \N^+$ such that $\la_1(F, A, \ell, K_m) < -\la$.
    
    Let $v_0 \dots v_{\ell_0}$ be the path added to $F$ to obtain $(F, A, \ell_0)$, where the vertex $v_0$ is connected to every vertex in $A$. Set $\la_0 := -\la_1(F, A, \ell_0)$, and let $\bx \colon V(F) \cup \sset{v_0, \dots, v_{\ell_0}} \to \R$ be an eigenvector of $(F, A, \ell_0)$ associated with $-\la_0$.

    Now fix $\ell \in \N$ with $\ell < \ell_0$, and let $m \in \N^+$ be determined later. Set $V_\ell := V(F) \cup \sset{v_0, \dots, v_\ell}$, and identify the vertex set of $(F, A, \ell, K_m)$ with $V_\ell \cup V(K_m)$. We abuse notation and write $x_i$ in place of $x_{v_i}$ for $i \in \sset{\ell, \dots, \ell_0}$. Define $\tbx\colon V_\ell \cup V(K_m) \to \R$ by
    \[
        \tx_v = \begin{cases}
            x_v & \text{if }v \in V_\ell; \\
            x_{\ell+1}/m & \text{if }v \in V(K_m).
        \end{cases}
    \]
    
    We claim that $\sum_{v \in V_\ell}x_v^2 > 0$. Indeed, assume for the sake of contradiction that $x_v = 0$ for $v \in V_\ell$. Using $-\la_0x_i = \sum_{u\sim v_i}x_u$ for $i \in \sset{\ell, \dots, \ell_0-1}$, where the sum is taken over all vertices $u$ that are adjacent to the vertex $v_i$ in $(F, A, \ell_0)$, one can prove inductively that $x_i = 0$ for $i \in \sset{\ell + 1, \dots, \ell_0}$, which contradicts the assumption that $\bx$ is a nonzero vector.
    
    Because $\sum_{v \in V_\ell} x_v^2 > 0$, clearly $\tbx$ is a nonzero vector. We compute
    \[
        \tbx^\intercal \tbx = \sum_{v \in V_\ell}x_v^2 + m(x_{\ell + 1}/m)^2.
    \]
    Moreover we can compute $\tbx^\intercal A_{(F, A, \ell, K_m)} \tbx$ as follows
    \[
        \tbx^\intercal A_{(F, A, \ell, K_m)} \tbx = \sum_{u, v \in V_\ell\colon u \sim v} x_ux_v + 2x_\ell x_{\ell+1} + m(m-1)(x_{\ell+1}/m)^2.
    \]
    Since $\bx$ is an eigenvector of $(F, A, \ell_0)$ associated with $-\la_0$, we obtain that
    \[
        \sum_{u, v \in V_\ell\colon u \sim v} x_ux_v + x_\ell x_{\ell + 1} = \sum_{v \in V_\ell}x_v\sum_{u \sim v}x_u = -\la_0 \sum_{v \in V_\ell}x_v^2.
    \]
    Thus $\tbx^\intercal A_{(F, A, \ell, K_m)} \tbx$ can be simplified to
    \[
        \tbx^\intercal A_{(F, A, \ell, K_m)} \tbx = -\la_0 \sum_{v \in V_\ell}x_v^2 + x_\ell x_{\ell+1} + m(m-1)(x_{\ell+1}/m)^2.
    \]
    The Rayleigh principle says that $\la_1(F, A, \ell, K_m)$ is at most
    \[
        \frac{-\la_0 \sum_{v \in V_\ell} x_v^2 + x_\ell x_{\ell+1} + m(m-1)(x_{\ell+1}/m)^2}{\sum_{v \in V_\ell}x_v^2 + m(x_{\ell + 1}/m)^2},
    \]
    which, as $m \to \infty$, approaches
    \[
        -\la_0 + \frac{(x_\ell + x_{\ell+1})x_{\ell+1}}{\sum_{v \in V_\ell}x_v^2}.
    \]
    Here we used the above claim that the denominator in the limit is positive.
    
    Recall that $\la_0 = -\la_1(F, A, \ell_0) > \la \ge 2$. It suffices to show that $(x_\ell + x_{\ell+1})x_{\ell+1} \le 0$. In fact, we prove inductively that $(x_i + x_{i+1})x_{i+1} \le 0$ for $i \in \sset{\ell, \dots, \ell_0 - 1}$. The base case where $i = \ell_0 - 1$ follows immediately from $-\la_0x_{\ell_0} = x_{\ell_0-1}$ and $\la_0 > 2$. For the inductive step, using $-\la_0x_{i+1} = x_i + x_{i+2}$ and $\la_0 > 2$, we obtain
    \begin{multline*}
        (x_i + x_{i+1})x_{i+1} = (-\la_0x_{i+1} - x_{i+2} + x_{i+1})x_{i+1} = -(\la_0 - 2)x_{i+1}^2 - (x_{i+1}+x_{i+2})x_{i+1} \\
        \le -(x_{i+1}+x_{i+2})x_{i+1} = -(x_{i+1}+x_{i+2})^2 + (x_{i+1}+x_{i+2})x_{i+2} \le (x_{i+1}+x_{i+2})x_{i+2},
    \end{multline*}
    which is nonpositive by the inductive hypothesis.
\end{proof}

We are ready to prove \cref{thm:main1-2}.

\begin{proof}[Proof of \cref{thm:main1-2}]
    Suppose that $\la \in [2, \la^*)$. Let $\F$ denote the set of minimal forbidden subgraphs for the family $\D_\infty$. Combining \cref{lem:computation-1,lem:path-clique-extension}, we choose $\ell, m \in \N^+$ such that for every $F \in \F$, the extension family $\X(F, \ell, m)$ is disjoint from $\Gl$. From \cref{lem:hoffman}(\ref{lem:s}), we know that $S_5 \not\in \Gl$. In particular, no graph in $\Gl$ contains any member in the following family
    \begin{equation} \label{eqn:main1-2}
        \sset{S_5} \cup \bigcup_{F \in \F} \X(F, \ell, m)
    \end{equation}
    as a subgraph. Using \cref{lem:forbid-extensions}, we obtain $N \in \N$ such that for every connected graph $G$ with more than $N$ vertices, if no member in \eqref{eqn:main1-2} is a subgraph of $G$, then neither is any $F \in \F$, and so $G$ is a generalized line graph and is in $\G(2)$ by \cref{thm:forb-char-generalized-line,thm:generalized-line-smallest-ev}. This implies that every connected graph in $\Gl \setminus \G(2)$ has at most $N$ vertices.
\end{proof}

We end this subsection with a remark on an alternative proof of \cref{thm:main1-2}. Instead of working with the family $\D_\infty$, one should be able to establish \cref{lem:computation-1} for every minimal forbidden subgraph for $\G(2)$, and then prove \cref{thm:main1-2} similarly. This alternative approach is more direct as it does not rely on generalized line graphs. However the drawback is obvious --- there are $1812$ minimal forbidden subgraphs for $\G(2)$. To the best of our knowledge, there is no easily accessible database for these $1812$ graphs; for example, a complete list of these graphs was printed to microfiche in \cite{BN92}, and a list of only $92$ of these graphs up to $8$ vertices was available in \cite[Fig.~2.4 and Table~A1.2]{CRS04}. One certainly can implement the reasonably fast algorithm in \cite{BN92} to enumerate the minimal forbidden subgraphs for $\G(2)$. However as we strive to keep the computer-assisted part of the proof to a minimum, we work with the $31$ minimal forbidden subgraphs for $\D_\infty$.

In fact, the family $\G(2)$ and its subfamily $\D_\infty$ are interchangeable when both families are restricted to sufficiently large connected graphs because of a characterization of $\G(2)$ due to Cameron, Goethals, Seidel, and Shult. To state their characterization, we introduce the following definitions.

\begin{definition}[Representation of graphs and root systems $D_n$ and $E_8$] \label{def:dn-e8}
    Given a graph $G$ and $V \subseteq \R^n$, we say that $G$ is \emph{represented} by $V$ if the Gram matrix of $V$ is equal to $A_G + 2I$, where $A_G$ is the adjacency matrix of $G$. The \emph{root systems} $D_n$ and $E_8$ are defined by the standard basis $e_1, \dots e_n$ of $\R^n$ as follows
    \[
        D_n := \dset{\eps_1 \bm{e}_i + \eps_2 \bm{e}_j}{\eps_1, \eps_2 = \pm 1, 1 \le i < j \le n}, \quad E_8 := D_8 \cup \dset{\frac12\sum_{i=1}^8 \eps_i \bm{e}_i}{\eps_i = \pm 1, \prod_{i=1}^8 \eps_i = 1}.
    \]
\end{definition}

\begin{theorem}[Theorems~4.2, 4.3 and 4.10 of Cameron et al.~\cite{CGSS76}] \label{thm:cgss}
    For every connected graph $G$, the smallest eigenvalue of $G$ is at least $-2$ if and only if $G$ is represented by a subset of $D_n$ or $E_8$. Moreover
    \begin{enumerate}[label=(\alph*)]
        \item a graph (not necessarily connected) is represented by a subset of $D_n$ if and only if it is a generalized line graph; and \label{thm:cgss-dn}
        \item a graph represented by a subset of $E_8$ has at most $36$ vertices, and its maximum degree is at most $28$. \qed
    \end{enumerate}
\end{theorem}

In particular, \cref{thm:cgss} implies that every connected graph in $\G(2)$ with more than $36$ vertices is a generalized line graph.

\subsection{Proof of \texorpdfstring{\cref{thm:main1}}{the first main theorem} for \texorpdfstring{$\la \ge \la^*$}{λ ≥ λ*}}

Suppose that $\sset{F_1, \dots, F_n}$ is a finite forbidden subgraph characterization of $\Gl$. Because every graph that is not in $\Gl$ contains $F_i$ as a subgraph for some $i \in \sset{1, \dots, n}$, no graph has its smallest eigenvalue in the open interval $(\max\dset{\la_1(F_i)}{i \in \sset{1, \dots, n}},-\la)$. Recall that $\La_1$ consists of $\la \in \R$ such that $-\la$ is the smallest eigenvalue of some graph. The contrapositive of the above observation says the following.

\begin{proposition} \label{lem:limit-points}
    Let $\lim_+ \La_1 := \dset{\la \in \R}{(\la, \la + \eps) \cap \La_1 \neq \varnothing \text{ for every }\eps > 0}$ be the set of right-sided limit points of $\La_1$. The family $\Gl$ does not have a finite forbidden subgraph characterization for any $\la \in \lim_+ \La_1$. \qed
\end{proposition}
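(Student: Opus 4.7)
The plan is to give a direct contrapositive argument: start from a hypothetical finite forbidden subgraph characterization $\sset{F_1, \dots, F_n}$ for $\Gl$, extract a spectral gap from it, and show that this gap rules out $\la$ being a right-sided limit point of $\La_1$.

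Concretely, I would set $\mu := \max\dset{\la_1(F_i)}{i \in \sset{1, \dots, n}}$ and first observe that $\mu < -\la$, since each $F_i$ is a forbidden subgraph and hence fails the condition $\la_1(F_i) \ge -\la$. Next I would take an arbitrary graph $G$ with $G \not\in \Gl$. By the definition of a forbidden subgraph characterization, $G$ contains some $F_i$ as an induced subgraph, and then the Cauchy interlacing theorem (cited in the introduction as the reason $\Gl$ is closed under taking induced subgraphs) yields $\la_1(G) \le \la_1(F_i) \le \mu$. Combining the two cases $G \in \Gl$ and $G \not\in \Gl$, the smallest eigenvalue of every graph lies in $(-\infty, \mu] \cup [-\la, \infty)$.

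Translating this statement into the language of $\La_1$: if $\la' \in \La_1$, then $-\la'$ is the smallest eigenvalue of some graph, so either $-\la' \le \mu$ (equivalently $\la' \ge -\mu$) or $-\la' \ge -\la$ (equivalently $\la' \le \la$). Setting $\eps := -\mu - \la$, which is strictly positive by the first observation, we conclude that $(\la, \la + \eps) \cap \La_1 = \varnothing$. This contradicts $\la \in \lim_+ \La_1$, completing the contrapositive and hence the proof.

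There is no serious obstacle here; the proposition is essentially a formal restatement of the paragraph preceding it, with the only minor point being the care needed to convert between smallest eigenvalues (which are $\le -\la$) and elements of $\La_1$ (which are $\ge \la$) via sign inversion, so that the resulting forbidden interval in $\La_1$ is of the form $(\la, \la + \eps)$ and witnesses the failure of $\la$ to be a right-sided limit point.
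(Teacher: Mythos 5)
Your argument is correct and is essentially the paper's own proof: the paper derives the same spectral gap, observing that a finite characterization $\sset{F_1,\dots,F_n}$ forces every smallest eigenvalue to avoid the interval $(\max_i \la_1(F_i), -\la)$, and states the proposition as its contrapositive. Your explicit handling of the sign inversion and the choice $\eps = -\mu - \la$ just spells out what the paper leaves implicit.
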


In fact, we prove that $\La_1$ is dense in $(\la^*, \infty)$, from which the first main theorem and its corollary for $\la \ge \la^*$ follow.

\begin{theorem} \label{thm:main1-3}
    For every $\la > \la^*$, there exist graphs $G_1, G_2, \dots$ such that $\lim_{n\to\infty} \la_1(G_n) = -\la$.
\end{theorem}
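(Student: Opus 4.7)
The plan is to prove the theorem by showing that $\La_1$ is dense in $(\la^*, \infty)$, which immediately supplies the desired sequences $G_1, G_2, \dots$ for every $\la > \la^*$. I would split into two ranges.

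For $\la \ge \la'$, the conclusion is already recorded in the introduction: Doob~\cite{D89} observed that every real number in $[\la', \infty)$ is a limit point of $\La_1$, so sequences with $\la_1(G_n) \to -\la$ exist. Hence the substantive work concerns $\la \in (\la^*, \la')$.

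For $\la \in (\la^*, \la')$, I would exhibit tree families whose smallest eigenvalues densely fill $(-\la', -\la^*)$. The natural starting point is \cref{lem:e2n}, which gives $\la_1(E_{2,n}) \nearrow -\la^*$ as $n \to \infty$. This generalizes to a family $E_{k,n}$ (a long central path of length $n$ with $k$ pendant vertices at each end), for which $\la_1(E_{k,n}) \to -\al_k$ from above, where the constants $\al_k$ from \cref{thm:forb1} satisfy $\la^* = \al_2 < \al_3 < \dots \nearrow \la'$. This single family already produces infinitely many accumulation points in $(-\la', -\la^*)$, but a priori leaves the open gaps $(-\al_{k+1}, -\al_k)$ unreached. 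To fill those gaps I would introduce asymmetry: trees obtained from a long path by attaching $j$ pendants at one end and $k$ pendants at the other (and possibly additional pendants at intermediate positions), yielding a multi-parameter family whose limit eigenvalues should interpolate between $-\al_j$ and $-\al_k$.

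The main obstacle is to verify that the multi-parameter limit set is genuinely dense in $(-\la', -\la^*)$, rather than a sparse enlargement of $\sset{-\al_k}$. I plan to address this by a transfer-matrix analysis along the long central path: the characteristic polynomial of such a tree factors through a product of $2 \times 2$ matrices depending only on the trial eigenvalue $\la$, and in the limit of infinite path length the eigenvalue condition reduces to a single boundary equation of the form $f_{\text{left}}(\la) \cdot f_{\text{right}}(\la) = 1$ for rational functions $f_{\text{left}}, f_{\text{right}}$ determined by the branching data on each end. Varying the branching parameters makes each factor range over a dense subset of $\R$, and an intermediate-value argument then produces solutions $\la$ densely filling $(-\la', -\la^*)$, completing the proof.
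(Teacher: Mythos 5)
Your reduction of the problem to density of $\La_1$ in $(\la^*,\infty)$, and your treatment of the range $\la \ge \la'$ via Shearer/Doob, match the paper. The problem is the range $\la \in (\la^*, \la')$: the family you propose consists of trees, and trees are bipartite, so their smallest eigenvalue is exactly the negative of their spectral radius. By Hoffman's classical determination of the limit points of spectral radii (the result underlying \cref{thm:forb1}), the set of spectral radii of \emph{all} graphs has only the countable set $\sset{\al_2, \al_3, \dots}$ as limit points inside $(2,\la')$; in particular it is nowhere dense in $(\la^*,\la') = (\al_2,\la')$. So no family of trees --- symmetric, asymmetric, or with extra pendants along the path --- can have smallest eigenvalues dense in $(-\la',-\la^*)$; the best your multi-parameter family can produce is a discrete set of limit values of the form $-\al_k$ (plus points at or below $-\la'$). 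You can also see the contradiction internally: if tree spectral radii were dense in some subinterval of $(\la^*,\la')$, then by the spectral-radius analogue of \cref{lem:limit-points} the family $\Gp$ would have no finite forbidden subgraph characterization anywhere in that subinterval, contradicting \cref{thm:forb1}. Concretely, the flaw in your transfer-matrix heuristic is the claim that varying the branching data makes each boundary factor range over a dense set: the branching parameters are integers, and the resulting boundary equation has a discrete solution set --- this is precisely Hoffman's computation showing the only limit points are the $\al_m$.

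This is why the paper's proof of \cref{thm:main1-3} must leave the bipartite world. It uses \emph{rowing graphs}: a long path with a coxswain and with cliques of varying sizes glued onto consecutive path vertices. The cliques create odd cycles, which breaks the symmetry between smallest eigenvalue and spectral radius and is exactly what allows smallest eigenvalues (but not spectral radii) to become dense in $(\la^*,\la')$ --- this is also the content of \cref{cor:main1}, which refutes Doob's conjecture that $\al_2,\al_3,\dots$ are the only limit points of $\La_1$ there. Even with the right family, the density argument is not a soft interpolation: the paper proves a list of Rayleigh-quotient estimates (\cref{lem:rowing}) and then runs a well-foundedness/minimal-counterexample argument on sequences of clique sizes to trap a rowing graph with smallest eigenvalue in a prescribed window $(-(1+\eps/2)\la-\eps,-\la)$. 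Any correct proof needs some non-bipartite mechanism of this kind; as written, your plan cannot be repaired within the class of trees.
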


\begin{proof}[Proof of \cref{thm:main1} for $\la \ge \la^*$]
    \cref{thm:main1-3} implies that $\lim_+\La_1 \supseteq [\la^*, \infty)$, which implies through \cref{lem:limit-points} that $\Gl$ has no finite forbidden subgraph characterization for any $\la \ge \la^*$.
\end{proof}

\begin{proof}[Proof of \cref{cor:main1} for $\la \ge \la^*$]
    It follows immediately from \cref{thm:main1-3} that $-\la$ is a limit point of the set of smallest eigenvalues of graphs for $\la \ge \la^*$.
\end{proof}

A large chunk of \cref{thm:main1-3} is essentially established by Shearer~\cite{S89}, who proved that the set of spectral radii of all graphs is dense in $(\la', \infty)$, where $\la' = \sqrt{2+\sqrt{5}}$. As was pointed out in \cite{D89}, Shearer actually proved that the set of spectral radii of all caterpillar trees\footnote{A caterpillar tree is a tree in which all the vertices are within distance 1 of a central path.} is dense in $(\la', \infty)$ already. Since a caterpillar tree is bipartite, we rephrase Shearer's result in terms of smallest eigenvalues.

\begin{theorem}[Shearer~\cite{S89}; cf.\ Theorem 3 of Doob~\cite{D89}] \label{thm:shearer}
    For every $\la \ge \la'$, there exist caterpillar trees $G_1, G_2, \dots$ such that $\lim_{n\to\infty} \la_1(G_n) = -\la$. \qed
\end{theorem}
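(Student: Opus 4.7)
Since caterpillar trees are bipartite, their spectra are symmetric about $0$, so $\la_1(T) = -\la^1(T)$ for every caterpillar $T$. The plan is therefore to construct, for each target $\la \ge \la'$ and each $\eps > 0$, a caterpillar with spectral radius within $\eps$ of $\la$. I would parameterize caterpillars by a leaf profile: let $T(n_1,\dots,n_k)$ denote the caterpillar with spine $v_1v_2\dots v_k$ and $n_i \ge 0$ pendant leaves attached to $v_i$.

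By symmetry, a Perron eigenvector $\bx$ assigns the same value $x_i/\la$ to every leaf at $v_i$, and the eigenvalue equation at the spine reduces to the three-term recurrence
\[
    x_{i+1} = \bigl(\la - n_i/\la\bigr)\,x_i - x_{i-1}, \qquad x_0 = x_{k+1} = 0.
\]
Hence $\la = \la^1\bigl(T(n_1,\dots,n_k)\bigr)$ if and only if the product of the transfer matrices $M_i(\la) := \bigl(\begin{smallmatrix} \la - n_i/\la & -1 \\ 1 & 0 \end{smallmatrix}\bigr)$ applied to $(x_1,0)^\intercal$ produces a vector whose second coordinate vanishes. For the homogeneous profile $n_i \equiv n$, this is a Chebyshev-type equation whose roots converge, as $k\to\infty$, to the solution of $\la - n/\la = 2$, namely $\la_n := 1+\sqrt{1+n}$.

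To interpolate between the discrete limits $\la_n$ and achieve density in $[\la',\infty)$, I would consider \emph{boundary-decorated} caterpillars of the form: a short initial segment with leaf counts $(m_1,\dots,m_t)$, then a long homogeneous tail of length $k$ with $n$ leaves per spine vertex, and a symmetric decoration on the other side. Diagonalizing $M_n(\la)$ and factoring the transfer-matrix product shows that the limit $\la_\infty(m_1,\dots,m_t;n) := \lim_{k\to\infty}\la^1$ exists and depends continuously on the decoration parameters, so an intermediate-value argument yields an interval of achievable limits for each fixed $(m_1,\dots,m_t;n)$. The main obstacle, and the substantive step, is verifying that the union of these intervals, as the decorations and $n$ vary, is dense in $[\la',\infty)$. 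This reduces to (i) checking that adjacent intervals overlap via monotonicity of $\la_\infty$ in the decoration parameters, combined with Cauchy interlacing when a decoration vertex is added or its leaf count is incremented, and (ii) identifying the extremal decoration attaining the infimum of the achievable set and verifying that this infimum equals exactly $\la'$, a computation that should amount to matching the defining equation $\la^2 = 2+\sqrt 5$.
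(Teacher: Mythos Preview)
The paper does not prove this theorem. It is quoted as an external result of Shearer (with Doob's observation that the graphs can be taken to be caterpillars), and the paper simply rephrases it in terms of smallest eigenvalues using the bipartiteness of caterpillars. There is therefore no ``paper's own proof'' to compare your proposal against; the paper uses \cref{thm:shearer} as a black box to cover the range $\la \ge \la'$ in the proof of \cref{thm:main1-3}.

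As for your sketch itself: the transfer-matrix setup is correct, and the computation that the homogeneous caterpillar with $n$ leaves per spine vertex has limiting spectral radius $1+\sqrt{1+n}$ is fine. But your density step has a gap. You invoke an ``intermediate-value argument'' on the decoration parameters $(m_1,\dots,m_t)$, yet these are integers, not continuous parameters, so IVT does not apply directly; monotonicity and interlacing alone only tell you that the limits $\la_\infty$ move in a controlled direction when a parameter changes by $1$, not that the resulting set of limits is dense. Shearer's actual argument is more delicate: he builds the caterpillar digit by digit, at each stage choosing the next leaf count so as to keep the spectral radius just below the target, and proves that the error can be driven to zero. Your outline gestures at the right class of graphs but does not supply this inductive refinement, which is the substantive content of Shearer's proof.
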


To fill the gap between $\la^* \approx 2.01980$ and $\la' \approx 2.05817$, we use the following graphs.

\begin{definition}[Rowing graphs]
    Given a sequence $(a_1, \dots, a_n)$ of natural numbers, a \emph{rowing graph} $R(a_1, \dots, a_n)$ is obtained from the path $v_{-2}v_{-1}v_{0}v_{1}\dots v_{n}$ (called the \emph{central path}) by attaching a vertex (called the \emph{coxswain}) to $v_0$, and attaching a clique of order $a_i$ to both $v_{i-1}$ and $v_i$ for every $i \in \sset{1, \dots, n}$. See \cref{fig:rowing-graph} for an example of a rowing graph.
\end{definition}

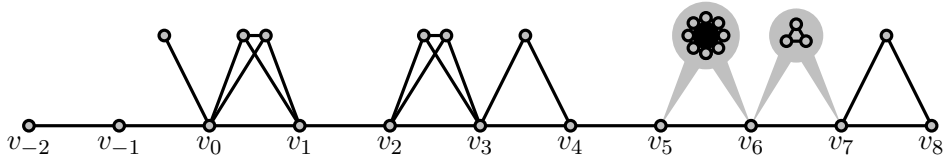
\begin{figure}[t]
    \centering
    \begin{tikzpicture}[very thick, scale=0.6]
        \draw (0,3);
        \draw (-1,2) node[vertex]{} -- (0,0);
        \draw (0.75,2) -- (1.25,2);
        \draw (0,0) -- (0.75,2) node[vertex]{} -- (2,0);
        \draw (0,0) -- (1.25,2) node[vertex]{} -- (2,0);
        
        \draw (4.75,2) -- (5.25,2);
        \draw (4,0) -- (4.75,2) node[vertex]{} -- (6,0);
        \draw (4,0) -- (5.25,2) node[vertex]{} -- (6,0);
        \draw (6,0) -- (7,2) node[vertex]{} -- (8,0);

        \draw[fill=litegray, draw=none] (10,0) -- (11,1.5) -- (12, 0) -- (11,2.7) -- cycle;

        \draw[fill=litegray, draw=none] (11, 2) circle (0.75);
        \draw[fill=black, draw=none] (11, 2) circle (0.3);

        \draw (11,1.608) node[vertex]{} -- ++(22.5:0.3) node[vertex]{} -- ++(67.5:0.3) node[vertex]{} -- ++(112.5:0.3) node[vertex]{} -- ++(157.5:0.3) node[vertex]{} -- ++(202.5:0.3) node[vertex]{} -- ++(247.5:0.3) node[vertex]{} -- ++(292.5:0.3) node[vertex]{} -- ++(337.5:0.3) node[vertex]{};

        \draw[fill=litegray, draw=none] (12,0) -- (13,1.6) -- (14,0) -- (13,2.6) -- cycle;

        \draw[fill=litegray, draw=none] (13, 2) circle (0.6);
        
        \draw (13,2.25) node[vertex]{} -- ++(240:0.433) node[vertex]{} -- ++(0:0.433) node[vertex]{} -- cycle;

        \draw (14,0) -- (15,2) node[vertex]{} -- (16,0);
        
        \draw (-4,0) node[vertex]{} -- (-2,0) node[vertex]{} -- (0,0) node[vertex]{} -- (2,0) node[vertex]{} -- (4,0) node[vertex]{} -- (6,0) node[vertex]{} -- (8,0) node[vertex]{} -- (10,0) node[vertex]{} -- (12,0) node[vertex]{} -- (14,0) node[vertex]{} -- (16,0) node[vertex]{};
        \draw (-1,2) node[above]{$v_c$};
        \draw (-4,0) node[below]{$v_{-2}$};
        \draw (-2,0) node[below]{$v_{-1}$};
        \draw (0,0) node[below]{$v_{0}$};
        \draw (2,0) node[below]{$v_1$};
        \draw (4,0) node[below]{$v_2$};
        \draw (6,0) node[below]{$v_3$};
        \draw (8,0) node[below]{$v_4$};
        \draw (10,0) node[below]{$v_5$};
        \draw (12,0) node[below]{$v_6$};
        \draw (14,0) node[below]{$v_7$};
        \draw (16,0) node[below]{$v_8$};
    \end{tikzpicture}
    \caption{A schematic drawing of the rowing graph $R(2,0,2,1,0,8,3,1)$.} \label{fig:rowing-graph}
\end{figure}

We consider rowing graphs for the following two heuristics. On the one hand, a rowing graph is almost a line graph, whose smallest eigenvalue is at least $-2$ --- when the coxwain is removed from a rowing graph, it becomes the line graph of a caterpillar tree. On the other hand, the rowing graph $R(a_1, \dots, a_n)$ contains $E_{2,n}$ (see \cref{fig:e2n}), whose smallest eigenvalue decreases to $-\la^*$ as $n \to \infty$.

We adopt the shorthand
\[
    (a_1, \dots, a_{k-1}, 0^{(\ell)}, a_k, \dots, a_n) := (a_1, \dots, a_{k-1}, \underbrace{0, \dots, 0}_{\ell}, a_k, \dots, a_n).
\]

\begin{lemma} \label{lem:rowing}
    Rowing graphs satisfy the following properties.
    \begin{enumerate}[label=(\alph*)]
        \item \label{lem:rowing-0} The smallest eigenvalue of a rowing graph is at least $-1-\sqrt{2}$.
        \item \label{lem:rowing-a} There exists $a_1 \in \N$ such that $\la_1(R(a_1)) < -\la' \approx -2.05817$.
        \item \label{lem:rowing-b} For every $\eps > 0$ there exists $\ell \in \N^+$ such that
            \begin{multline*}
                \la_1(R(a_1, \dots, a_{n_1}, 0^{(\ell)}, b_{1}, \dots, b_{n_2})) > \la_1(R(a_1, \dots, a_{n_1}, 0^{(\ell)})) - \eps\\
                \text{ for every }n_1, n_2 \in \N, (a_1, \dots, a_{n_1}) \in \N^{n_1} \text{ and } (b_1, \dots, b_{n_2}) \in \N^{n_2}.  
            \end{multline*}
        \item \label{lem:rowing-c} For every $n, \ell \in \N^+$ and $(a_1, \dots, a_n) \in \N^n$ with $a_n > 0$, if $\la_1(R(a_1, \dots, a_n, 0^{(\ell)})) \le -2$, then for every $\eps > 0$ there exists $a_{n+1} \in \N^+$ such that
            \[
                \la_1(R(a_1, \dots, a_{n-1}, a_n-1, a_{n+1}, 0^{(\ell - 1)})) < \la_1(R(a_1, \dots, a_n, 0^{(\ell)})) + \eps.
            \]
        \item \label{lem:rowing-d} For every $\eps > 0$, there exists $m \in \N^+$ such that for every $n \ge m$ and every $(a_1, \dots, a_n) \in \N^n$ there exists $k \in \sset{1, \dots, m}$ such that
            \[
                \la_1(R(a_1, \dots, a_{k-1}, 0, a_k, \dots, a_n)) < \la_1(R(a_1, \dots, a_n)) + \eps.
            \]
    \end{enumerate}
\end{lemma}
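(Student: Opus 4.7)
The plan is to establish the five parts in turn, using a common tool throughout: the twin-vertex quotient. In each attached clique $C_i$ the $a_i$ vertices are pairwise twin, so any eigenvector of $R := R(a_1, \dots, a_n)$ with eigenvalue $\la \neq -1$ takes a common value $x_{c_i}$ on $C_i$, and the clique equation yields $x_{c_i} = (x_{v_{i-1}} + x_{v_i})/(\la - a_i + 1)$. Substituting back reduces the spectral problem to a weighted recurrence on $(x_{v_i})$ with boundary conditions imposed by the pendant tail $v_{-2}v_{-1}v_0$ and the coxswain.

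For (a), decompose $A_R + (1+\sqrt{2})I$ as a sum of positive semidefinite contributions from the cliques $K_{a_i+2}$ (each contributing $A_{K_i} + I_{K_i} = J_{K_i} \succeq 0$) plus a correction that accounts for the overcounting on the shared central-path vertices $v_0, \dots, v_n$ and the contribution from the pendant tail and the coxswain. A careful bookkeeping shows the correction is again positive semidefinite when one uses the constant $1 + \sqrt{2}$ in place of the more obvious $2$, yielding $\la_1(R) \ge -1-\sqrt{2}$. Equivalently, via the twin-quotient recurrence, one verifies that for $\la < -1-\sqrt{2}$ the characteristic roots are incompatible with the boundary conditions, forcing the recurrence solution to vanish identically.

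For (b), apply (a) to obtain the uniform lower bound $\la_1 \ge -1-\sqrt{2}$ across all rowing graphs. Set $G := R(a_1, \dots, a_{n_1}, 0^{(\ell)})$ and $G' := R(a_1, \dots, a_{n_1}, 0^{(\ell)}, b_1, \dots, b_{n_2})$; Cauchy interlacing gives $\la_1(G') \le \la_1(G)$, so only the reverse estimate is nontrivial. The eigenvector on the pure-path tail $0^{(\ell)}$ satisfies $\la x_i = x_{i-1} + x_{i+1}$, whose characteristic roots have moduli bounded away from $1$ when $\la \le -2-\eta$ and which mimic the sinusoidal behavior of $P_\ell$ when $\la > -2-\eta$; combining the two regimes with the uniform lower bound from (a) yields $|\la_1(G') - \la_1(G)| \le f(\ell)$ for some $f(\ell) \to 0$, with $f$ independent of the choice of $n_1, n_2, a_i, b_j$. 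For (c), the new clique of size $a_{n+1}$ contributes $a_{n+1} x_{c_{n+1}} = a_{n+1}(x_{v_n} + x_{v_{n+1}})/(\la - a_{n+1} + 1) \to -(x_{v_n} + x_{v_{n+1}})$ to the eigenvalue equations at $v_n$ and $v_{n+1}$ as $a_{n+1} \to \infty$. Matching this asymptotic modification with the effect of simultaneously reducing the clique at position $n$ from size $a_n$ to $a_n - 1$ and removing one trailing zero shows that the limiting smallest eigenvalue equals $\la_1(R(a_1, \dots, a_n, 0^{(\ell)}))$; continuity in $a_{n+1}$ then yields the required choice.

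For (d), let $x$ be the unit eigenvector of $R(a_1, \dots, a_n)$ for $\la_1$. By pigeonhole on $\sum_{i=1}^{m} (x_{v_{i-1}}^2 + x_{v_i}^2) \le 2$, some $k \in \{1, \dots, m\}$ satisfies $x_{v_{k-1}}^2 + x_{v_k}^2 \le 2/m$. Build a test vector $y$ on $R(a_1, \dots, a_{k-1}, 0, a_k, \dots, a_n)$ by keeping the $x$-values on the natural embedding of the old vertex set and assigning value $t$ at the inserted central-path vertex, with $t^2 = \eps/2$ and the sign of $t$ chosen so that $t (x_{v_{k-1}} + x_{v_k} + a_k x_{c_k}) \le 0$. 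Noting that the edges lost under the embedding all involve $v_{k-1}$ and the gained edges all involve the new vertex, a direct Rayleigh quotient computation yields a bound of the form $(\la_1 + O(\sqrt{\eps/m}) + O(1/m))/(1 + \eps/2)$, which is at most $(\la_1 + \eps)/(1 + \eps/2)$ once $m$ exceeds a threshold depending only on $\eps$. The main obstacle is part (a), where the PSD decomposition must be reconciled with the boundary contributions from the tail and coxswain; the remaining parts follow a common template of twin-quotient reduction followed by Rayleigh quotient perturbation or pigeonhole.
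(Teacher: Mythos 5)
Your proposal does not close several of the five parts; the gaps are concrete. First, you never address part \ref{lem:rowing-a} at all (the existence of $a_1$ with $\la_1(R(a_1)) < -\la'$): your items (a)--(d) correspond to parts \ref{lem:rowing-0}, \ref{lem:rowing-b}, \ref{lem:rowing-c}, \ref{lem:rowing-d}, and the missing part needs an explicit construction (the paper exhibits a test vector on $R(a_1)$ whose Rayleigh quotient tends to $-52/25$ as $a_1 \to \infty$). Second, for part \ref{lem:rowing-0} you only assert that $A_R + (1+\sqrt2)I$ admits a sum-of-cliques-plus-PSD-correction decomposition; no such decomposition is exhibited, and it is exactly the content to be proved. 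The paper's mechanism is different and is what makes the constant $-1-\sqrt2$ appear: delete the coxswain, observe the remainder is the line graph of a caterpillar (so its smallest eigenvalue is at least $-2$), and account for the coxswain exactly via $\la x_c = x_0$, which yields $\la \ge -2 + (2+2\la)/(1+\la^2)$ and hence $\la \ge -1-\sqrt2$. Third, in your argument for part \ref{lem:rowing-b} the transfer-matrix/decay sketch never says what happens after you locate a place in the zero block where the eigenvector is small: you must cut there and you need the piece on the far side to have smallest eigenvalue at least $\la_1(R(\ba,0^{(\ell)}))$; the paper gets this because that piece is a line graph of a caterpillar (eigenvalue $\ge -2$) while $\la_1(R(\ba,0^{(\ell)})) < -2$ for $\ell \ge 6$. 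Invoking the uniform bound $-1-\sqrt2$ from part \ref{lem:rowing-0} there, as you suggest, would only give a lower bound of roughly $-1-\sqrt2-\eps$, which is useless. Similarly, for part \ref{lem:rowing-c} you assert that the limit as $a_{n+1}\to\infty$ of the new smallest eigenvalue \emph{equals} $\la_1(R(a_1,\dots,a_n,0^{(\ell)}))$; ``matching the asymptotic modification'' is not an argument, and the actual proof requires a test vector on the modified graph together with sign estimates along the zero tail (an induction showing $x_i^2 - \la x_i x_{i+1} + x_{i+1}^2 \ge 0$), using both $\la \le -2$ and the bound from part \ref{lem:rowing-0}.

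The step that outright fails is your part \ref{lem:rowing-d} argument. With the natural embedding, the lost edges contribute $-2x_{v_{k-1}}x_{v_k} - 2x_{v_{k-1}}\sigma_k$ to the quadratic form, where $\sigma_k$ is the sum of the eigenvector over the clique of size $a_k$, and the gained edges contribute $2t(x_{v_{k-1}} + x_{v_k} + \sigma_k)$. Your sign choice only makes the gained part nonpositive; the lost part is left uncontrolled, and $|\sigma_k|$ can be as large as $\sqrt{a_k}$ with $a_k$ unbounded, so $|2x_{v_{k-1}}\sigma_k|$ is not $O(\sqrt{\eps/m})+O(1/m)$ and the claimed bound is false. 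The repair is not a sign trick but an exact cancellation: give the inserted vertex the \emph{same} value as the split endpoint (so each old neighbor of that endpoint sees the same value as before and all clique cross terms cancel), which changes the quadratic form by exactly $2x_k^2$ and the norm by $x_k^2$; then the pigeonhole bound $x_k^2 \le 1/m < \eps/2$ together with the monotonicity of $s \mapsto (\la_1+2s)/(1+s)$ gives the stated inequality. This is what the paper does.
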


In the proof of \cref{lem:rowing}, we work with vectors $\bx$ on the vertex set of a rowing graph, whose coxswain and central path are denoted by $v_c$ and $v_{-2}v_{-1} \dots$, and we abuse notation and write $x_c$ and $x_i$ in place of $x_{v_c}$ and $x_{v_i}$ respectively.

\begin{proof}[Proof of \ref{lem:rowing-0}]
    Set $\ba = (a_1, \dots, a_n)$ and $\la = \la_1(R(\ba))$. Let $v_{-2} \dots v_n$ denote the central path of $R(\ba)$, and let $v_c$ denote the coxswain attached to $v_0$. Let $\bx\colon V(R(\ba)) \to \R$ be a unit eigenvector of $R(\ba)$ associated with $\la$. Clearly we have $\la x_c = x_0$, which implies that
    \begin{equation} \label{eqn:rowing-0}
        x_c^2 = \frac{x_c^2 + x_0^2}{1+\la^2} \le \frac{1}{1+\la^2}.
    \end{equation}
    Let $L$ be the rowing graph $R(\ba)$ with $v_c$ removed, and let $\bx_L$ be $\bx$ restricted to $V(L)$. Notice that $L$ is a line graph of a caterpillar tree, and so $\la_1(L) \ge -2$ (see \cref{footnote:line-graph} on \cpageref{footnote:line-graph}). Finally, we bound the smallest eigenvalue of $R(\ba)$ as follows:
    \begin{multline*}
        \la = \bx^\intercal A_{R(\ba)} \bx = \bx_L^\intercal A_L \bx_L + 2x_cx_0 \ge -2\bx_L^\intercal \bx_L + 2x_cx_0 \\
        = -2(1-x_c^2) + 2\la x_c^2 = -2+(2+2\la)x_c^2 \stackrel{\eqref{eqn:rowing-0}}{\ge} -2 + \frac{2+2\la}{1+\la^2},
    \end{multline*}
    which implies that $\la \ge -1-\sqrt{2}$. In the above inequality, we assumed that $\la \le -1$ which follows from the fact that $R(\ba)$ has an edge.
\end{proof}

\begin{proof}[Proof of \ref{lem:rowing-a}]
    Let $\bx$ be the vector that assigns $1$ to $v_{-2}$, $-2$ to $v_{-1}$, $4$ to $v_0$, $0$ to $v_1$, $-2$ to the coxswain, and $-4/a_1$ to every vertex in the clique of size $a_1$. The Rayleigh principle says that $\la_1(R(a_1))$ is at most
    \[
        \frac{\bx^\intercal A_{R(a_1)} \bx}{\bx^\intercal \bx} = \frac{-2(1 \cdot 2 + 2 \cdot 4 + 4 \cdot 2 + a_1 \cdot 4 \cdot (4/a_1)) + a_1(a_1-1) \cdot (-4/a_1)^2}{1^2 + (-2)^2 + 4^2 + (-2)^2 + a_1 \cdot (-4/a_1)^2},
    \]
    which approaches $-52/25 = -2.08$ as $a_1 \to \infty$.
\end{proof}

\begin{proof}[Proof of \ref{lem:rowing-b}]
    Take $\ell = \lceil 2/\eps \rceil + 5$. Let $v_{-2} \dots v_{n_1 + \ell + n_2}$ denote the central path of the rowing graph $R(\ba, 0^{(\ell)}, \bb)$, where $\ba = (a_1, \dots, a_{n_1})$ and $\bb = (b_1, \dots, b_{n_2})$, and let $\bx\colon V(R(\ba,0^{(\ell)},\bb)) \to \R$ be a unit eigenvector of $R(\ba,0^{(\ell)},\bb)$ associated with the smallest eigenvalue. Choose $k \in \sset{0, \dots, \ell - 1}$ such that $x_{n_1+k}x_{n_1+k+1}$ reaches the minimum in absolute value. In particular, using the inequality $\abs{x_{n_1+i}x_{n_1+i+1}} \le (x_{n_1+i}^2 + x_{n_1+i+1}^2)/2$, we obtain
    \begin{equation} \label{eqn:rowing-b}
        \abs{x_{n_1+k}x_{n_1+k+1}} \le \frac{1}{\ell}\sum_{i = 0}^{\ell - 1}\abs{x_{n_1+i}x_{n_1+i+1}} \le \frac{1}{\ell} \sum_{i=0}^{\ell}x_{n_1+i}^2 \le \frac{1}{\ell} < \frac{\eps}{2}.
    \end{equation}
    
    Notice that removing the edge $v_{n_1+k} v_{n_1+k+1}$ disconnects $R(\ba,0^{(\ell)},\bb)$ into two subgraphs, one of which is $R(\ba, 0^{(k)})$, while the other is a line graph, denoted $L$, of a caterpillar tree. Clearly
    \begin{equation} \label{eqn:rowing-b-1}
        \la_1(R(\ba, 0^{(k)})) \ge \la_1(R(\ba, 0^{(\ell)})).
    \end{equation}
    As $\ell \ge 6$, the graph $\widetilde{E}_8$ in \cref{fig:dynkin} is a proper subgraph of $R(\ba, 0^{(\ell)})$, and so $\la_1(R(\ba, 0^{(\ell)})) < -2$. Together with $\la_1(L) \ge -2$ (see \cref{footnote:line-graph} on \cpageref{footnote:line-graph}), we obtain
    \begin{equation} \label{eqn:rowing-b-2}
        \la_1(L) > \la_1(R(\ba, 0^{(\ell)})).
    \end{equation}
    Let $\bx_R$ and $\bx_L$ be the unit eigenvector $\bx$ restricted to $V(R(\ba,0^{(k)}))$ and $V(L)$. Finally, we bound the smallest eigenvalue of $R(\ba,0^{(\ell)},\bb)$ as follows:
    \begin{align*}
        \la_1(R(\ba,0^{(\ell)},\bb)) & \stackrel{\phantom{(\ref{eqn:rowing-b},\ref{eqn:rowing-b-1},\ref{eqn:rowing-b-2})}}{=} \bx^\intercal A_{R(\ba,0^{(\ell)},\bb)}\bx \\
        & \stackrel{\phantom{(\ref{eqn:rowing-b},\ref{eqn:rowing-b-1},\ref{eqn:rowing-b-2})}}{=} \bx_R^\intercal A_{R(\ba,0^{(k)})} \bx_R + 2x_{n_1+k}x_{n_1+k+1} + \bx_L^\intercal A_L\bx_L \\
        & \stackrel{\phantom{(\ref{eqn:rowing-b},\ref{eqn:rowing-b-1},\ref{eqn:rowing-b-2})}}{\ge} \la_1(R(\ba,0^{(k)})) \bx_R^\intercal \bx_R + 2x_{n_1+k}x_{n_1+k+1} + \la_1(L)\bx_L^\intercal \bx_L \\
        & \stackrel{(\ref{eqn:rowing-b},\ref{eqn:rowing-b-1},\ref{eqn:rowing-b-2})}{>} \la_1(R(\ba,0^{(\ell)}))(\bx_R^\intercal \bx_R + \bx_L^\intercal \bx_L) - \eps \\
        & \stackrel{\phantom{(\ref{eqn:rowing-b},\ref{eqn:rowing-b-1},\ref{eqn:rowing-b-2})}}{=} \la_1(R(\ba, 0^{(\ell)})) - \eps. \qedhere
    \end{align*}
\end{proof}

\begin{proof}[Proof of \ref{lem:rowing-c}]
    Set $\ba = (a_1, \dots, a_n, 0^{(\ell)})$ and $\la = \la_1(R(\ba))$. Let $\bx\colon V(R(\ba)) \to \R$ be an eigenvector of $R(\ba)$ associated with $\la$. Let $v_{-2}\dots v_{n + \ell}$ denote the central path of the rowing graph $R(\ba)$. In $R(\ba)$, let $K$ denote the clique of size $a_n$ attached to both $v_{n-1}$ and $v_n$, and pick an arbitrary vertex $u$ from $K$. We clearly have
    \begin{equation} \label{eqn:rowing-c-0}
        \begin{split}
            \la x_u & = x_{n-1} + \sigma + x_n, \\
            \la x_n & = x_{n-1} + \sigma + x_u + x_{n+1},
        \end{split}
    \end{equation}
    where $\sigma = \sum_{u' \in V(K) \setminus \sset{u}} x_{u'}$.The above identities imply that $\la(x_u - x_n) = x_n - x_u - x_{n+1}$, and so
    \begin{equation} \label{eqn:rowing-c-1}
        x_u = x_n - \frac{x_{n+1}}{1 + \la}.
    \end{equation}
    
    Let $R(\tba)$ be the rowing graph obtained from $R(\ba)$ by removing $u$ and attaching a clique, denoted $\tilde{K}$, to both $v_n$ and $v_{n+1}$, where $\tba = (a_1, \dots, a_n-1,a_{n+1},0^{(\ell-1)})$, and the order $a_{n+1}$ of $\tilde{K}$ is chosen later. In particular, $V(R(\tba)) = V(R(\ba)) \setminus \sset{u} \cup V(\tilde{K})$.
    
    We define a vector $\tbx\colon V(R(\tba))\to\R$ as follows:
    \[
        \tx_v = \begin{cases}
            x_n + x_u & \text{if } v = v_n; \\
            -(x_n + x_u + x_{n+1})/a_{n+1} & \text{if } v \in V(\tilde{K}); \\
            x_v & \text{otherwise}.
        \end{cases}
    \]
    Because $\bx$ is a nonzero vector, one can check that so is $\tbx$. We compute
    \[
        \tbx^\intercal \tbx = \bx^\intercal \bx - x_n^2 - x_u^2 + (x_n + x_u)^2 + \frac{(x_n + x_u + x_{n+1})^2}{a_{n+1}}.
    \]
    Moreover we can compare $\tbx^\intercal A_{R(\tba)} \tbx$ and $\bx^\intercal A_{R(\ba)} \bx$ as follows
    \begin{multline*}
        \tbx^\intercal A_{R(\tba)} \tbx = \bx^\intercal A_{R(\ba)} \bx - 2x_u\left(x_{n-1} + \sigma + x_n\right) + 2x_u\left(x_{n-1} + \sigma + x_{n+1} \right) \\
        - 2(x_n + x_u + x_{n+1})^2 + a_{n+1}(a_{n+1}-1)\left(\frac{x_n + x_u + x_{n+1}}{a_{n+1}}\right)^2,
    \end{multline*}
    which simplifies via \eqref{eqn:rowing-c-0} to
    \[
        \tbx^\intercal A_{R(\tba)} \tbx = \la \bx^\intercal \bx - 2\la x_u^2 + 2x_u (\la x_n - x_u) - \left(1 + \frac{1}{a_{n+1}}\right)(x_n + x_u + x_{n+1})^2.
    \]
    The Rayleigh principle says that $\la_1(R(\tba))$ is at most
    \[
        \frac{\tbx^\intercal A_{R(\tba)} \tbx}{\tbx^\intercal \tbx} = \frac{\la \bx^\intercal \bx - 2\la x_u^2 + 2x_u (\la x_n - x_u) - \left(1 + 1/a_{n+1}\right)(x_n + x_u + x_{n+1})^2}{\bx^\intercal \bx - x_n^2 - x_u^2 + (x_n + x_u)^2 + (x_n + x_u + x_{n+1})^2/a_{n+1}},
    \]
    which, as $a_{n+1} \to \infty$, approaches
    \[
        \frac{\la \bx^\intercal \bx - 2\la x_u^2 + 2x_u(\la x_n - x_u) - (x_n + x_u + x_{n+1})^2}{\bx^\intercal \bx - x_n^2 - x_u^2 + (x_n + x_u)^2}.
    \]
    Here we assumed that the denominator in the limit is nonzero. Indeed, suppose on the contrary that $\bx^\intercal \bx = x_n^2 + x_u^2 - (x_n + x_u)^2 = -2x_nx_u$. Because $-2x_nx_u \le x_n^2 + x_u^2 \le x_n^2 + x_u^2 + x_{n+1}^2 \le \bx^\intercal \bx$, it must be the case that $x_n + x_u = 0$ and $x_{n+1} = 0$. In view of \eqref{eqn:rowing-c-1}, we have $x_n = x_u = 0$ and hence $\bx = \bm{0}$, which is a contradiction.
    
    It suffices to prove that
    \[
    \la \bx^\intercal \bx - 2\la x_u^2 + 2x_u(\la x_n - x_u) - (x_n + x_u + x_{n+1})^2 \le \la\left( \bx^\intercal \bx - x_n^2 - x_u^2 + (x_n + x_u)^2 \right),
    \]
    which is equivalent to
    \[
        (x_n+x_u+x_{n+1})^2 + 2(1+\la)x_u^2 \ge 0.
    \]
    Using \eqref{eqn:rowing-c-1}, we know that the left hand side of the last inequality is equal to
    \begin{multline*}
        \left(2x_n + \frac{\la}{1+\la}x_{n+1}\right)^2 + 2(\la+1)\left(x_n-\frac{x_{n+1}}{1+\la}\right)^2 \\
        = (2\la + 6)x_n^2 - \frac{4}{1+\la}x_nx_{n+1}+\left(1+\frac{1}{(1+\la)^2}\right)x_{n+1}^2.
    \end{multline*}
    From \cref{lem:rowing}\ref{lem:rowing-0}, we know that $\la \ge -1-\sqrt{2}$ in addition to the assumption that $\la \le -2$. One can check
    \[
        1 + \frac{1}{(1+\la)^2} \ge 2\la + 6 \ge \frac{4}{1+\la}\cdot \frac{1}{\la} > 0, \quad \text{for }\la \in [-1-\sqrt2,-2].
    \]
    
    We are left to prove $x_n^2 - \la x_nx_{n+1} + x_{n+1}^2 \ge 0$. In fact, we prove inductively that $x_i^2 - \la x_ix_{i+1} + x_{i+1}^2 \ge 0$ for $i \in \sset{n, \dots, n + \ell - 1}$. The base case where $i = n + \ell - 1$ follows immediately from $\la x_{n+\ell} = x_{n+\ell-1}$. For the inductive step, using $\la x_{i+1} = x_i + x_{i+2}$, we obtain
    \[
        x_i^2 - \la x_i x_{i+1} + x_{i+1}^2 = (\la x_{i+1}-x_{i+2})^2 - \la (\la x_{i+1}-x_{i+2})x_{i+1} + x_{i+1}^2 = x_{i+1}^2 - \la x_{i+1}x_{i+2} + x_{i+2}^2,
    \]
    which is nonnegative by the inductive hypothesis.
\end{proof}

\begin{proof}[Proof of \ref{lem:rowing-d}]
    Take $m = \lceil 5/\eps \rceil + 1$. Let $v_{-2} \dots v_n$ denote the central path of the rowing graph $R(\ba)$, where $\ba = (a_1, \dots, a_n)$, and let $\bx\colon V(R(\ba)) \to \R$ be a unit eigenvector associated with the smallest eigenvalue of $R(\ba)$. Choose $k \in \sset{1, \dots, m}$ such that $x_{k-1}$ reaches the minimum in absolute value. In particular,
    \begin{equation} \label{eqn:rowing-d}
        x_{k-1}^2 \le \frac{1}{m} \sum_{i=0}^{m-1}x_i^2 \le \frac{1}{m} < \frac{\eps}{5}.
    \end{equation}
    
    Let $v_{-2} \dots v_{k-1}v_* v_{k} \dots v_n$ denote the central path of $R(\tba)$, where $\tba = (a_1, \dots, a_{k-1},0,a_{k}, \dots, a_n)$. We naturally view the vertex set of $R(\tba)$ as $V(R(\ba)) \cup \{ v_*\}$, and we extend the unit eigenvector $\bx\colon V(R(\ba)) \to \R$ to $\tbx\colon V(R(\tba)) \to \R$ by setting $\tilde{x}_* = x_{k-1}$. The Rayleigh principle says that $\la_1(R(\tba))$ is at most
    \begin{multline*}
        \frac{\tbx^\intercal A_{R(\tba)} \tbx}{\tbx^\intercal \tbx} = \frac{\bx^\intercal A_{R(\ba)} \bx + 2x_{k-1}^2}{\bx^\intercal \bx + x_{k-1}^2} = \frac{\la_1(R(\ba)) + 2x_{k-1}^2}{1 + x_{k-1}^2} \\
        = \la_1(R(\ba)) + \frac{(2-\la_1(R(\ba)))x_{k-1}^2}{1+x_{k-1}^2} \le \la_1(R(\ba)) + (2-\la_1(R(\ba)))x_{k-1}^2.
    \end{multline*}
    From \cref{lem:rowing}\ref{lem:rowing-0}, we obtain
    \begin{equation*}
        (2-\la_1(R(\ba)))x_{k-1}^2 \le (2 + (1 + \sqrt2))x_{k-1}^2 \stackrel{\eqref{eqn:rowing-d}}{<} \eps. \qedhere
    \end{equation*}
\end{proof}

We now have all of the ingredients needed to establish \cref{thm:main1-3}.

\begin{proof}[Proof of \cref{thm:main1-3}]
    In view of \cref{thm:shearer}, we may assume that $\la \in (\la^*, \la')$. Fix $\eps > 0$. We assume for the sake of contradiction that no rowing graph has its smallest eigenvalue in $(-\la - \eps, -\la)$ because otherwise we are done. Define
    \begin{gather*}
        S := \dset{(a_1, \dots, a_n) \in \N^n}{n \in \N^+ \text{ and } a_n > 0}, \\
        A := \dset{(a_1, \dots, a_n) \in S}{\la_1(R(a_1, \dots, a_n, 0^{(\ell)})) < -\la \text{ for some }\ell \in \N}.
    \end{gather*}
    
    \begin{claim*}
        For every $m \in \N^+$ there exists $(a_1, \dots, a_n) \in A$ with exactly $m$ nonzero entries such that
        \begin{equation} \label{eqn:main1-3-claim}
            (a_1, \dots, a_{k-1}, 0, a_k, \dots, a_n) \not\in A\text{ for every }k \in \sset{1, \dots, n}.
        \end{equation}
    \end{claim*}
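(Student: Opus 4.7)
My plan is to prove the claim by induction on $m$, combining an extremality argument with \cref{lem:rowing}\ref{lem:rowing-b} and the standing assumption that no rowing graph has smallest eigenvalue in $(-\la - \eps, -\la)$, where without loss of generality I may take $\eps < \la - 2$.

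For the base case $m = 1$, I take $b_1$ with $(b_1) \in A$ via \cref{lem:rowing}\ref{lem:rowing-a}. The set $T := \dset{e_0 \in \N}{(0^{(e_0)}, b_1) \in A}$ is finite: by \cref{lem:rowing}\ref{lem:rowing-b} applied with empty left-part, for $e_0 \ge \ell_*$ (the threshold from that lemma), the eigenvalue $\la_1(R(0^{(e_0)}, b_1, 0^{(\ell')}))$ is bounded below by $\la_1(R(0^{(\ell_*)})) - \eps > -2 - \eps > -\la$, excluding the tuple from $A$. Taking $e_0^\ast := \max T$, the sole resulting zero-insertion $(0^{(e_0^\ast + 1)}, b_1)$ falls outside $A$ by maximality.

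For the inductive step, I strengthen the claim to also produce an \emph{irreducible} tight tuple $(b_1, \dots, b_m) \in A_m$ (all $b_i > 0$), meaning that no partial tuple $(0^{(e_0)}, b_1, 0^{(e_1)}, \dots, 0^{(e_{j-1})}, b_j)$ with $1 \le j < m$ and $e_0, \dots, e_{j-1} \ge 0$ lies in $A$. Given such a tuple, I show that
\[
    T_\la := \dset{(e_0, \dots, e_{m-1}) \in \N^m}{(0^{(e_0)}, b_1, 0^{(e_1)}, \dots, b_m) \in A}
\]
is finite. Boundedness of $e_0$ mirrors the base case. For $j \ge 1$, if $e_j \ge \ell_*$, splitting the full tuple at the $j$-th gap via \cref{lem:rowing}\ref{lem:rowing-b} yields $\la_1(\text{full}) > \la_1(R(0^{(e_0)}, b_1, \dots, b_j, 0^{(\ell_*)})) - \eps$; by irreducibility the prefix is not in $A$, so the right-hand side is $\ge -\la - \eps$, and since the full tuple is in $A$ we would land in the interval $(-\la - \eps, -\la)$, contradicting the standing assumption. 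Hence $T_\la$ is a finite subset of $\N^m$ containing the origin, and any Pareto-maximal element of $T_\la$ yields a tuple satisfying~\eqref{eqn:main1-3-claim}.

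The main obstacle is constructing the irreducible tight tuple. I plan to build it inductively via \cref{lem:rowing}\ref{lem:rowing-c}: starting from an irreducible $(b_1, \dots, b_{m-1})$ provided by the inductive hypothesis, I apply the lemma with a sufficiently large new last entry to obtain $(b_1, \dots, b_{m-1} - 1, b_m) \in A_m$. An extremal choice of $b_{m-1}$ at the previous induction step (for instance, taking it minimal subject to irreducibility) should guarantee $(b_1, \dots, b_{m-1} - 1) \notin A$, and an interlacing argument then rules out all partial prefixes of the new tuple lying in $A$, preserving irreducibility.
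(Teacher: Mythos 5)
Your overall skeleton (produce a gap-free ``irreducible'' tuple $(b_1,\dots,b_m)$, show the set $T_\la$ of admissible gap patterns is finite using \cref{lem:rowing}\ref{lem:rowing-b} plus the no-eigenvalue-in-$(-\la-\eps,-\la)$ assumption, then take a coordinatewise-maximal gap pattern) is coherent, and the finiteness argument for $T_\la$ is essentially sound once the trailing zeros witnessing membership in $A$ are carried along. But the heart of the proof --- constructing the irreducible tuple and propagating irreducibility through the \cref{lem:rowing}\ref{lem:rowing-c} step --- is exactly the part you have not supplied, and the appeal to ``an interlacing argument'' does not work. What you need there is that if the gap-free prefix $(b_1,\dots,b_{m-1}-1)$ is not in $A$, then no gapped version $(0^{(e_0)},b_1,\dots,0^{(e_{m-2})},b_{m-1}-1)$ is in $A$ either; but inserting interior zeros into the central path of a rowing graph does not produce a graph containing (or contained in) the original as a subgraph, so Cauchy interlacing gives nothing, and no monotonicity of $\la_1$ under interior zero-insertion is available --- indeed \cref{lem:rowing}\ref{lem:rowing-d} shows a well-chosen insertion keeps $\la_1$ essentially as low as before, and the entire difficulty of the claim is that insertions are uncontrolled in this way. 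A second unaddressed degeneracy: \cref{lem:rowing}\ref{lem:rowing-c} decrements $b_{m-1}$, and if $b_{m-1}=1$ the new tuple $(b_1,\dots,b_{m-2},0,b_m)$ has only $m-1$ nonzero entries and an interior zero, so your induction (which insists on all-positive tuples) cannot proceed; your ``minimal last entry subject to irreducibility'' choice does not rule this out, because the offending object is not a prefix of the previous tuple and so does not contradict its irreducibility. There is also a small slip in the base case: $\la_1(R(0^{(\ell_*)}))=\la_1(E_{2,\ell_*})<-2$ for $\ell_*\ge 6$, so the chain ``$>-2-\eps>-\la$'' is false as written; you must instead use $\la_1(R(0^{(\ell_*)}))>-\la^*>-\la$ together with the standing gap assumption, as the paper does.

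For comparison, the paper avoids needing any such monotonicity by working throughout with tuples that are allowed interior zeros: it introduces the quasi-lexicographic order $\prec$ on $S$ under which every zero-insertion strictly decreases a tuple, proves the restricted posets $S_\ell$ are well-founded, and takes a $\prec$-minimal element of $A_m$ --- minimality alone then yields \eqref{eqn:main1-3-claim}. The inductive construction keeps a descending chain $\ba^{(1)}\succ\dots\succ\ba^{(m)}$ of minimal elements at all levels, and it is this cross-level minimality (not an eigenvalue monotonicity) that rules out all gapped prefixes of the candidate tuple and also forces the decremented entry to stay positive ($b_n>1$). You would need to import some device of this kind to close the gaps above; as it stands, the proposal is incomplete at its crucial step.
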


    \begin{claimproof}[Proof of Claim]
        We order the elements in $S$ as follows:
        \begin{multline*}
            (a_1, \dots, a_{n_1}) \prec (b_1, \dots, b_{n_2}) \text{ if and only if}\\
            (a_1, \dots, a_{n_1}, 0^{(n_2)}) \text{ strictly precedes } (b_1, \dots, b_{n_2}, 0^{(n_1)}) \text{ lexicographically}.
        \end{multline*}
        Alternatively, $(a_1, \dots, a_{n_1}) \prec (b_1, \dots, b_{n_2})$ if and only if there exists $\delta > 0$ such that $\sum_{i=1}^{n_1} a_ix^i < \sum_{i=1}^{n_2} b_ix^i$ for every $x \in (0, \delta)$. We shall repeatedly use the fact that for every $\ell \in \N^+$ the set
        \[
            S_\ell := \dset{\ba \in S}{\text{the length of }\ba \text{ is at most }\ell}
        \]
        does not contain an infinite descending chain, hence $(S_\ell, \preceq)$ is well-founded, that is, every nonempty subset of $S_\ell$ has a minimal element.\footnote{Assuming the axiom of dependent choice, a weak form of the axiom of choice, $(S_\ell, \preceq)$ is well-founded if it contains no countable infinite descending chains.} This fact can be established by a simple induction on $\ell$.
        
        In particular, the ordering on $S$ implies that $(a_1, \dots a_{k-1}, 0, a_k, \dots, a_n) \prec (a_1, \dots, a_n)$ when both sequences are in $S$ and $k \in \sset{1, \dots, n}$. Thus it suffices to construct, for every $m \in \N^+$, a minimal element $\ba^{(m)}$ of $A_m$, where
        \[
            A_m := \dset{(a_1, \dots, a_n) \in A}{(a_1, \dots, a_n) \text{ has }m\text{ nonzero entries}}.
        \]
        Our inductive construction additionally requires that $\ba^{(1)}, \ba^{(2)}, \ba^{(3)},\dots$ form a descending chain.
        
        Apply \cref{lem:rowing}\ref{lem:rowing-b} to obtain $\ell \in \N^+$ such that
        \begin{multline} \label{eqn:main1-3-l}
            \la_1(R(a_1, \dots, a_{n_1}, 0^{(\ell)}, b_1, \dots, b_{n_2})) > \la_1(R(a_1, \dots, a_{n_1}, 0^{(\ell)})) - \eps \\
            \text{for every }n_1, n_2 \in \N, (a_1, \dots, a_{n_1}) \in \N^{n_1} \text{ and } (b_1, \dots, b_{n_2}) \in \N^{n_2}.
        \end{multline}
        In particular, when $n_1 = 0$, we have
        \[
            \la_1(R(0^{(\ell)}, b_1, \dots, b_{n_2})) > \la_1(R(0^{(\ell)})) - \eps \text{ for every }(b_1, \dots, b_{n_2}) \in \N^{n_2}.
        \]
        
        For the base case where $m = 1$, note that $R(0^{(\ell)})$ is just $E_{2,\ell}$ defined in \cref{lem:e2n}, which satisfies that
        \[
            \la_1(R(0^{(\ell)})) = \la_1(E_{2,\ell}) > -\la^* > -\la.
        \]
        Since no rowing graph has its smallest eigenvalue in $(-\la - \eps, -\la)$, the above two inequalities imply
        \[
            \la_1(R(0^{(\ell)}, b_1, \dots, b_{n_2})) \ge -\la \text{ for every }(b_1, \dots, b_{n_2}) \in \N^{n_2},
        \]
        which further implies that the length of every sequence in $A_1$ is at most $\ell$, that is, $A_1 \subseteq S_{\ell}$. Moreover, \cref{lem:rowing}\ref{lem:rowing-a} implies that $A_1$ is nonempty. Therefore there exists a minimal element of $A_1$.
    
        For the inductive step, suppose that $m \in \N^+$, and that $\ba^{(1)} \succ \dots \succ \ba^{(m)}$ are minimal elements of $A_1, \dots, A_m$ respectively. Say $\ba^{(m)} = (b_1, \dots, b_n)$. \cref{lem:rowing}\ref{lem:rowing-c} shows that there exists $b_{n+1} \in \N^+$ such that $\bb^{(m+1)} := (b_1, \dots, b_{n-1}, b_n-1, b_{n+1}) \in A$. Since $\bb^{(m+1)} \prec \ba^{(m)}$ and $\ba^{(m)}$ is minimal in $A_m$, it must be the case that $b_n > 1$ or equivalently $\bb^{(m+1)} \in A_{m+1}$. It suffices to show that
        \[
            \dset{\bb \in A_{m+1}}{\bb \preceq \bb^{(m+1)}} \subseteq S_{(m+1)\ell}.
        \]
        Assume for the sake of contradiction that there exists $\bb \in A_{m+1}$ such that $\bb \preceq \bb^{(m+1)}$ and the length of $\bb$ is more than $(m+1)\ell$. By the pigeonhole principle, there exists an initial segment, denoted $\bb'$, of $\bb$ such that $(\bb', 0^{(\ell)})$ is an initial segment of $\bb$. We may require in addition that either $\bb'$ is the empty sequence or the last entry of $\bb'$ is positive. If $\bb'$ is the empty sequence, then we can argue similarly to the base case that
        \[
            \la_1(R(\bb, c_1, \dots, c_{n'})) \ge -\la \text{ for every }(c_1, \dots, c_{n'}) \in \N^{n'},
        \]
        which contradicts $\bb \in A_{m+1}$.
        
        We may assume that the last entry of $\bb'$ is positive. Because $(\bb', 0^{(\ell)})$ is an initial segment of $\bb$, and the last entry of $\bb$ is positive, the number of nonzero entries in $\bb'$, denoted $i \in \N^+$, is at most $m$. Since $\bb' \prec \bb \preceq \bb^{(m+1)} \prec \ba^{(m)} \preceq \ba^{(i)}$, and $\ba^{(i)}$ is minimal in $A_i$, we know that $\bb' \not\in A$, and in particular
        \[
            \la_1(R(\bb', 0^{(\ell)})) \ge -\la.
        \]
        From \eqref{eqn:main1-3-l}, we know that
        \[
            \la_1(R(\bb, c_1, \dots, c_{n'})) > \la_1(R(\bb', 0^{(\ell)})) - \eps \text{ for every }(c_1, \dots, c_{n'}) \in \N^{n'}.
        \]
        Since no rowing graph has its smallest eigenvalue in $(-\la-\eps, -\la)$, the above two inequalities imply
        \[
            \la_1(R(\bb, c_1, \dots, c_{n'})) \ge -\la \text{ for every }(c_1, \dots, c_{n'}) \in \N^{n'},
        \]
        which contradicts $\bb \in A_{m+1}$.
    \end{claimproof}

    Finally, let $m$ be given by \cref{lem:rowing}\ref{lem:rowing-d}. The claim provides $(a_1, \dots, a_n) \in A$ with $m$ nonzero entries such that \eqref{eqn:main1-3-claim} holds. Let $\ell \in \N$ be such that
    \[
        \la_1(R(a_1, \dots, a_n, 0^{(\ell)})) < -\la.
    \]
    Since the length of $(a_1, \dots, a_n, 0^{(\ell)})$ is at least $m$, \cref{lem:rowing}\ref{lem:rowing-d} says that there exists $k \in \sset{1, \dots, m}$ such that
    \[
        \la_1(R(a_1, \dots, a_{k-1}, 0, a_k, \dots, a_n, 0^{(\ell)})) < \la_1(R(a_1, \dots, a_n, 0^{(\ell)})) + \eps,
    \]
    However \eqref{eqn:main1-3-claim} asserts that $(a_1, \dots, a_{k-1}, 0, a_k, \dots, a_n) \not\in A$, which implies that
    \[
        \la_1(R(a_1, \dots, a_{k-1}, 0, a_k, \dots, a_n, 0^{(\ell)})) \ge -\la.
    \]
    Combining the last three inequalities, we obtain that
    \begin{equation*}
        -\la - \eps < \la_1(R(a_1,\dots, a_n, 0^{(\ell)})) < -\la. \qedhere
    \end{equation*}
\end{proof}

\section{Forbidden subgraphs for \texorpdfstring{$\Gs$}{signed graphs with eigenvalues bounded from below}} \label{sec:forb-signed}

A useful tool in spectral graph theory for signed graphs is switching --- two signed graphs are \emph{switching equivalent} if one graph can be obtained from the other by reversing all the edges in a cut-set. An important feature of switching equivalence is that the switching equivalent signed graphs all have the same spectrum.

Hereinafter we adopt the following convention for an unsigned graph $G$. With a slight abuse of notation, we denote by $G$ the all-positive signed graph with underlying graph $G$. We also denote by $-G$ the all-negative signed graph with the same underlying graph.

To prove the second main theorem, we need to extend the concepts and results from the previous section. We recommend that the reader go through the rest of the section alongside \cref{sec:forb}.

\subsection{Proof of \texorpdfstring{\cref{thm:main2}}{the second main theorem} for \texorpdfstring{$\la < 2$}{λ < 2}}

We first generalize path extensions, path-clique extensions, clique extensions, and \cref{lem:hoffman}(\ref{lem:c1}, \ref{lem:c2}).

\begin{definition}
    Given a nonempty vertex subset $A$ of a signed graph $F^\pm$, a \emph{signed vertex subset} $A^\pm$ of $F^\pm$ is the vertex subset $A$ together with an assignment of signs (positive or negative). Given, in addition, $\ell \in \N$ and $m \in \N^+$,
    \begin{enumerate}[label=(\alph*)]
        \item the \emph{path extension} $(F^\pm, A^\pm, \ell)$ is obtained from $F^\pm$ by adding an all-positive path $v_0 \dots v_\ell$ of length $\ell$ and connecting $v_0$ to every vertex $v$ in $A$ by an edge signed according to the sign of $v$ in $A^\pm$;
        \item the \emph{path-clique extension} $(F^\pm, A^\pm, \ell, K_m)$ is further obtained from $(F^\pm, A^\pm, \ell)$ by adding an all-positive clique of size $m$ and connecting every vertex in the clique to $v_\ell$ by a positive edge;
        \item the \emph{clique extension} $(F^\pm, A^\pm, K_m)$ is obtained from $F^\pm$ by adding a clique of order $m$ and connecting every vertex in the clique to every vertex $v$ in $A$ by an edge signed according to the sign of $v$ in $A^\pm$.
    \end{enumerate}
\end{definition}

\begin{lemma} \label{lem:hoffman-signed}
    For every signed cycle $C^\pm_n$ of length $n$ and every signed set $A^\pm$ of two adjacent vertices of $C^\pm_n$, the path-clique extensions and the clique extensions of $C^\pm_n$ satisfy:
    \begin{itemize}
        \item[(c1)] $\lim_{m\to\infty} \la_1(C_n^\pm, A^\pm, \ell, K_m) \le -2$ for fixed $\ell \in \N$;\wlabel{lem:c1s}{c1}
        \item[(c2)] $\lim_{m\to\infty} \la_1(C_n^\pm, A^\pm, K_m) \le -2$.\wlabel{lem:c2s}{c2}
    \end{itemize}
\end{lemma}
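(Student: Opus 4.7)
The plan is to exploit switching equivalence of signed graphs, which preserves the spectrum: negating the signs of all edges incident to a vertex subset $S$ yields an equivalent signed graph. I would first show that the switching-equivalence class of $(C_n^\pm, A^\pm, \ell, K_m)$ (and similarly for $(C_n^\pm, A^\pm, K_m)$) is determined by two binary invariants: the balance of the cycle $C_n^\pm$, and the balance $\sigma^{A^\pm}(a_1) \sigma^{A^\pm}(a_2) \sigma(a_1 a_2)$ of the triangle $v_0 a_1 a_2$ (or the analogous triangle $k a_1 a_2$ for any $k \in K_m$ in case (c2)). The justification is that the cycle space of the underlying graph is spanned by these two cycles together with the triangles inside $K_m$ and the triangles $v_\ell k k'$ (respectively the triangles $k a_i k'$ and the four-cycles $k a_1 k' a_2$ in (c2)), all of which are automatically balanced because the edges of $K_m$ and the edges from $K_m$ to $v_\ell$ or to any single vertex of $A$ share a common sign. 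Hence there are at most four switching-equivalence classes.

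When both invariants are positive, the extension is switching-equivalent to the unsigned extension $(C_n, V_2(C_n), \ell, K_m)$ or $(C_n, V_2(C_n), K_m)$, and the conclusion follows directly from \cref{lem:hoffman}(\ref{lem:c1}) or (\ref{lem:c2}). For each of the three remaining classes, I would construct a test vector $\bx_m$ by adapting Hoffman's unsigned eigenvector: choose a canonical representative with all edges positive outside $C_n^\pm \cup \sset{v_0 a_2}$, set $x_{a_2} = \varepsilon x_{a_1}$ with $\varepsilon \in \sset{\pm 1}$ chosen so as to neutralize the sign on $v_0 a_2$ (making the contributions of $v_0 a_1$ and $v_0 a_2$ to the Rayleigh quotient both match the unsigned case), and set $x_{b_i} = 0$ on the interior cycle vertices $b_1, \dots, b_{n-2}$. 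By switching within $\sset{b_1, \dots, b_{n-2}}$, any negative cycle edge can in general be relocated to an interior edge $b_i b_{i+1}$, where its contribution to $\bx_m^\intercal A^{G^\pm} \bx_m$ vanishes because both endpoints carry value zero.

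The main obstacle is the residual sub-case where the single negative cycle edge is forced to coincide with $e_0 = a_1 a_2$ and cannot be moved off by any switching among interior cycle vertices. There the Rayleigh quotient picks up an extra term $2\sigma_0 x_{a_1} x_{a_2}$ whose sign is opposite to the unsigned contribution. The key observation is that in Hoffman's test-vector computation for $(C_n, V_2(C_n), \ell, K_m)$ the dominant contributions to both numerator and denominator scale with the size of $K_m$ (via the clique's $(-1)$-eigenspace coupled to $v_\ell$), while the discrepancy originating from the single $a_1 a_2$ edge is only an $O(1)$ correction; hence it becomes negligible in the $m \to \infty$ limit, yielding $\limsup_m \la_1 \le -2$ in all cases and establishing both (c1) and (c2).
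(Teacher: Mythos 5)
Your switching classification (four classes, determined by the balance of the cycle and of the triangle $v_0a_1a_2$, respectively $ka_1a_2$ for (c2)) is correct, and the fully balanced class does reduce to \cref{lem:hoffman}. The gap is in the remaining classes: a test vector that vanishes on the interior cycle vertices cannot certify the bound when the attachment triangle is balanced. The Rayleigh quotient of such a vector only sees the signed subgraph induced on its support, and in that class this subgraph is switching-equivalent to an all-positive graph: for (c2) it is $K_{m+2}$, whose smallest eigenvalue is $-1$, and for (c1) it is the line graph of a broom tree, whose smallest eigenvalue is $>-2$ for every $m$ and stays bounded away from $-2$ as $m\to\infty$ for fixed $\ell$. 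Concretely, with $x_{a_2}=x_{a_1}$ the (positive, in your canonical form) edge $a_1a_2$ contributes $+2x_{a_1}^2$ and nothing is left to compensate it once the interior is zeroed, while with $x_{a_2}=-x_{a_1}$ the coupling to $v_0$ (or to the clique) cancels; either way the quotient limit is strictly above $-2$. Your proposed rescue is also unsound: it is false that numerator and denominator scale with $m$. Any vector whose quotient is near $-2$ must put total weight $O(1/m)$ on the clique --- a clique restriction lying in the $(-1)$-eigenspace sums to zero and hence has zero coupling to $v_\ell$, while one with nonzero sum has internal quadratic form of order $m$ times its mass --- so the denominator stays bounded and an $O(1)$ error in the numerator shifts the limit by a constant, not by $o(1)$.

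The paper's proof shows what is needed. After switching $A^\pm$ to $\sset{v_0^+,v_{n-1}^+}$, the two degenerate classes are handled by subgraphs rather than test vectors: if $\sigma_{0,n-1}=-1$ the extension contains a subgraph switching-equivalent to $-K_3$, and if the cycle has an even number of positive edges it is switching-equivalent to the all-negative cycle; in both cases $\la_1\le-2$ for every $m$ by interlacing. In the remaining class (odd number of positive edges and $\sigma_{0,n-1}=+1$) the test vector must use the entire cycle: the parity allows a choice of $x_i\in\sset{\pm1}$ on all cycle vertices with $\sigma_{i,i+1}x_ix_{i+1}=-1$ on every cycle edge except $v_0v_{n-1}$, and together with the Hoffman-style values $2(-1)^i$ on the path and $-x_{n+\ell}/m$ on the clique this gives the exact quotient $(-2n-8(\ell+1)-4/m)/(n+4(\ell+1)+4/m)\to-2$. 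The $+2$ contributed by the edge $v_0v_{n-1}$ is absorbed by the $-2(n-1)$ coming from the rest of the cycle --- exactly the compensation your construction throws away by setting the interior values to zero.
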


\begin{proof}
    Let $v_0,v_1,\dots, v_{n-1}$ be the vertices of $C_n^\pm$, and let $\sigma\colon E(C^\pm_n) \to \sset{\pm 1}$ be the signing of $C^\pm_n$. Suppose that $A^\pm$ is a signed set of $\sset{v_0,v_{n-1}}$. By switching a suitable subset of $\sset{v_0, v_{n-1}}$, we may assume that $A^\pm = \{v_0^+, v_{n-1}^+\}$. If $C_n^\pm$ has an even number of positive edges, then $C^\pm_n$ is already switching equivalent to the all-negative cycle of length $n$, whose smallest eigenvalue is $-2$. If the edge $v_0v_{n-1}$ is negative, then both $(C_n^\pm, A^\pm, \ell, K_m)$ and $(C_n^\pm, A^\pm, K_m)$ contain a subgraph that is switching equivalent to the all-negative triangle $-K_3$, and so their smallest eigenvalues are at most $\la_1(-K_3) = -2$.

    Hereafter we assume that $C_n^\pm$ has an odd number of positive edges, and the edge $v_0v_{n-1}$ is positive. Furthermore, by switching a suitable subset of $\sset{v_1, \dots, v_{n-2}}$, we may assume that $v_0v_{n-1}$ is the only positive edge of the signed cycle $C_n^\pm$.

    For (\ref{lem:c1s}), we define a vector $\bx\colon V(C_n^\pm, A^\pm, \ell, K_m) \to \R$ as follows. Let $v_{n}v_{n+1}\dots v_{n+\ell}$ and $K_m$ be the path and the clique added to $C_n^\pm$ to obtain the path-clique extension $(C_n^\pm, A^\pm, \ell, K_m)$, where $v_{n}$ is adjacent to $v_0$ and $v_{n-1}$, and $v_{n+\ell}$ is adjacent to every vertex in $K_m$. Assign $x_{v_i} = -1$ for $i \in \sset{0, \dots, n-1}$, $x_{v_{n+i}} = 2(-1)^i$ for $i \in \sset{0, \dots, \ell}$, and $x_u = -x_{n+\ell}/m$ for every $u \in V(K_m)$. We abuse notation and write $x_i$ in place of $x_{v_i}$ for $i \in \sset{0, \dots, n+\ell}$. The Rayleigh principle says that the smallest eigenvalue of $(C_n^\pm, A^\pm, \ell, K_m)$ is at most
    \begin{multline*}
        \bigg(-2\sum_{i=0}^{n-2}x_ix_{i+1} + 2x_0x_{n-1} + 2(x_0 + x_{n-1})x_n + 2\sum_{i=0}^{\ell-1}x_{n+i}x_{n+i+1} - 2x_{n+\ell}^2 + \\
        + m(m-1)(x_{n+\ell}/m)^2\bigg) \bigg/ \bigg(\sum_{i=0}^{n-1} x_i^2 + \sum_{i=0}^\ell x_{n+i}^2 + m(x_{n+\ell}/m)^2\bigg),
    \end{multline*}
    which is equal to $(-2n - 8(\ell+1) - 4/m)/(n + 4(\ell+1) + 4/m)$, and approaches $-2$ as $m \to \infty$.

    For (\ref{lem:c2s}), we define a vector $\bx \colon V(C_n^\pm, A^\pm, K_m) \to \R$ as follows. Let $K_m$ be the clique added to $C_n^\pm$ to obtain the clique extension $(C_n^\pm, A^\pm, K_m)$. Assign $x_{v_i} = -1$ for $i \in \sset{0, \dots, n-1}$ and $x_u = -(x_0 + x_{n-1})/m$ for every $u \in V(K_m)$. We abuse notation and write $x_i$ in place of $x_{v_i}$ for $i \in \sset{0,\dots, n-1}$. The Rayleigh principle says that the smallest eigenvalue of $(C_n^\pm, A^\pm, K_m)$ is at most
    \[
        \frac{-2\sum_{i=0}^{n-2}x_ix_{i+1} + 2x_0x_{n-1} - 2(x_0 + x_{n-1})^2 + m(m-1)((x_0 + x_{n-1})/m)^2}{\sum_{i=0}^{n-1} x_i^2 + m((x_0 + x_{n-1})/m)^2},
    \]
    which is equal to $(-2n - 4/m)/(n + 4/m)$, and approaches $-2$ as $m \to \infty$.
\end{proof}

We naturally generalize extension families as well as \cref{lem:hoffman-extension}.

\begin{definition}[Signed extension family]
    Given a signed graph $F^\pm$ and $\ell, m \in \N^+$, the \emph{signed extension family} $\X^\pm(F^\pm, \ell, m)$ of $F^\pm$ consists of the path-extension $(F^\pm, A^\pm, \ell)$, the path-clique extensions $(F^\pm, A^\pm, \ell_0, K_m)$, and the clique extension $(F^\pm, A^\pm, K_m)$, where $A^\pm$ ranges over the nonempty signed vertex subsets of $F^\pm$, and $\ell_0$ ranges over $\sset{0, \dots, \ell-1}$.
\end{definition}

Note that for an unsigned graph $F$ and $\ell, m \in \N^+$, the extension family $\X(F, \ell, m)$ (defined in \cref{def:extension-family}) is a subfamily of the signed extension family $\X^\pm(F, \ell, m)$.

\begin{lemma} \label{lem:hoffman-extension-signed}
    For every $\la < 2$, there exist $\ell, m \in \N^+$ such that both the signed extension family $\X^\pm(C, \ell, m)$ of the claw graph $C$ and the signed extension family $\X^\pm(D, \ell, m)$ of the diamond graph $D$ are disjoint from $\Gs$.
\end{lemma}

\begin{proof}
    From \cref{lem:hoffman}(\ref{lem:n1}, \ref{lem:n2}, \ref{lem:p}) and \cref{lem:hoffman-extension}, we obtain $\ell, m \in \N^+$ such that $P_\ell \not\in \Gs$, none of the following graphs
    \begin{equation} \label{eqn:signed-k2-extensions}
        (\overline{K}_2, V(\overline{K}_2), 0, K_m), \dots, (\overline{K}_2, V(\overline{K}_2), \ell-1, K_m), \text{ and }(\overline{K}_2, V(\overline{K}_2), K_m)
    \end{equation}
    is in $\Gs$, and $\X(C, \ell, m) \cup \X(D, \ell, m)$ is disjoint from $\Gs$. Since $P_\ell \not\in \Gs$, no path extension in $\X^\pm(C, \ell, m) \cup \X^\pm(D, \ell, m)$ is in $\Gs$.
    
    We are left to deal with the path-clique extensions and the clique extension
    \begin{equation} \label{eqn:signed-extension}
        (F, A^\pm, 0, K_m), \dots, (F, A^\pm, \ell-1, K_m), \text{ and }(F, A^\pm, K_m),
    \end{equation}
    where $F$ is $C$ or $D$, and $A^\pm$ is the signed set of a nonempty vertex subset $A$ of $F$. We break the rest of the proof into three cases.
    
    \bigskip
    \noindent\textit{Case 1:} $A^\pm$ is all-positive or all-negative. This case follows since the signed graphs in \eqref{eqn:signed-extension}, after switching the cut-set between $V(F)$ and its complement in case $A^\pm$ is all-negative, become respectively $(F, A, 0, K_m), \dots, (F, A, \ell-1, K_m)$, and $(F, A, K_m)$ from the extension family $\X(F, \ell, m)$, which is disjoint from $\Gs$.
    
    \bigskip
    \noindent\textit{Case 2:} There exists an edge $uv$ of $F$ such that $\sset{u^-, v^+} \subseteq A^\pm$. Since the all-negative triangle $-K_3$, whose smallest eigenvalue is $-2$, is switching equivalent to a subgraph of $(F, A^\pm, 0)$, no signed graph in \eqref{eqn:signed-extension} is in $\Gs$.

    \bigskip
    \noindent\textit{Case 3:} $A^\pm$ is neither all-positive nor all-negative, and no edge $uv$ of $F$ satisfies that $\sset{u^-, v^+} \subseteq A^\pm$. Label the vertices of $C$ and $D$ as in \cref{fig:claw-and-diamond}. We may assume without loss of generality that $\sset{1^-, 3^+} \subseteq A^\pm$. One can check that, for $F \in \sset{C, D}$, the signed graphs in \eqref{eqn:signed-k2-extensions}, after switching at the vertex labeled by $1$, are respectively subgraphs of those in \eqref{eqn:signed-extension}, and hence no signed graph in \eqref{eqn:signed-extension} is in $\Gs$.
\end{proof}

Next, generalizing \cref{lem:forbid-extensions}, we show that forbidding a star, an all-negative complete graph, and a signed extension family of $F^\pm$ effectively forbids $F^\pm$ itself in every sufficiently large connected signed graph.

\begin{lemma} \label{lem:forbid-extensions-signed}
    For every signed graph $F^\pm$ and $k_1, k_2, \ell, m \in \N^+$, there exists $N \in \N$ such that for every connected signed graph $G^\pm$ with more than $N$ vertices, if $G^\pm$ does not contain any subgraph that is switching equivalent to any member in $\sset{S_{k_1}, -K_{k_2}} \cup \X^\pm(F^\pm, \ell, m)$, then $G^\pm$ does not contain any subgraph that is switching equivalent to $F^\pm$ either. \qed
\end{lemma}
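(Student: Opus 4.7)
The plan is to parallel the proof of \cref{lem:forbid-extensions}, with two enhancements to handle signs: an extra Ramsey step that leverages the forbidden $-K_{k_2}$ to extract an all-positive clique (up to switching) in the neighborhood of a high-degree vertex, and sign-aware pigeonhole arguments in which each vertex of $V(F^\pm)$ is tracked with three options (no edge, positive edge, or negative edge), so the factor $2^v$ from the original proof becomes $3^v$ with $v := \abs{V(F^\pm)}$. An additional sign-pigeonhole will be needed for the edges joining the path to the clique in the path-clique case. With hindsight I would set $K := 3^v(3m+1) + v$, $d := R(k_1, R(k_2, K))$, and $N := d^{\ell+v}$, and assume for contradiction that $G^\pm$ is connected on more than $N$ vertices, avoids every subgraph switching equivalent to a member of $\sset{S_{k_1}, -K_{k_2}} \cup \X^\pm(F^\pm, \ell, m)$, and yet contains a subgraph on some $V \subseteq V(G^\pm)$ switching equivalent to $F^\pm$.

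The Moore-bound argument of \cref{lem:forbid-extensions} then yields that either the radius of the underlying graph is at least $\ell + v$, or the maximum degree is at least $d$. In the large-radius case, I would take the first $\ell + 1$ vertices $v_0, \dots, v_\ell$ on a shortest path from $V$ to some vertex at distance at least $\ell + 1$ from $V$; because this path is a tree (so any signing on it is switching equivalent to the all-positive signing), has no chords, and no $v_j$ with $j \ge 1$ is adjacent to $V$, the induced signed subgraph on $V \cup \sset{v_0, \dots, v_\ell}$ is switching equivalent to a path extension $(F^\pm, A^\pm, \ell)$ for some nonempty signed $A^\pm$, contradicting the hypothesis.

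In the large-degree case, pick $u$ of degree at least $d$. Forbidding $S_{k_1}$ gives that $N(u)$ has no independent set of size $k_1$, so Ramsey supplies a clique of size $R(k_2, K)$ in $N(u)$. A second application of Ramsey to the two-coloring of the edges of this clique by sign, together with the observation that every all-negative $K_{k_2}$ is trivially switching equivalent to $-K_{k_2}$ (forbidden), produces an all-positive clique $U \subseteq N(u)$ of size $K$. I would then pigeonhole $U \setminus V$ by the signed subset of $V$ to which each vertex connects, obtaining a class $U(A^\pm)$ of size at least $3m+1$. When $A^\pm \neq \varnothing$, any $m$ vertices of $U(A^\pm)$ together with $V$ induce, after switching on $V$ to canonicalize $F^\pm$, a signed graph switching equivalent to the clique extension $(F^\pm, A^\pm, K_m)$ (the canonicalizing switching possibly flipping some signs in $A^\pm$, which is harmless), a contradiction.

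When instead $A^\pm = \varnothing$, I would take a shortest path $v_0 \dots v_{\ell_0}$ between $V$ and $U(\varnothing)$ (with $v_0$ and $v_{\ell_0}$ at distance $1$ from the respective endpoints) and set $A_2 := N(v_{\ell_0}) \cap U(\varnothing)$; since $\abs{U(\varnothing)} \ge 3m+1$, either $\abs{A_2} \ge 2m$ or $\abs{U(\varnothing) \setminus A_2} \ge m+1$. In the first subcase, pigeonhole the signs of the $v_{\ell_0}$-to-$A_2$ edges to pick $m$ vertices $w_1, \dots, w_m \in A_2$ with a common sign, switch the path to be all positive, and, if necessary, switch the entire clique $\sset{w_1, \dots, w_m}$ to force its edges to $v_{\ell_0}$ to be positive, giving switching equivalence to $(F^\pm, A_1^\pm, \ell_0, K_m)$ for some nonempty signed $A_1^\pm$. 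The main obstacle, and the only real subtlety, is the verification of this last switching: one must check that switching the whole clique preserves the positive within-clique edges (they lie inside the switched set) and does not affect any edges to $V$ or to any $v_j$ with $j < \ell_0$, both of which follow because $w_i \in U(\varnothing)$ and by the shortest-path condition no such edges exist in the induced subgraph. In the second subcase, pick $v_{\ell_0+1} \in A_2$ and $m$ vertices from $U(\varnothing) \setminus A_2$; since $v_{\ell_0+1}$ and the chosen clique vertices all lie in $U(\varnothing)$, the $v_{\ell_0+1}$-to-clique edges are automatically positive, yielding switching equivalence to $(F^\pm, A_1^\pm, \ell_0+1, K_m)$ and completing the contradiction.
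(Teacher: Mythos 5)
Your route is exactly the one the paper intends: the lemma is stated with its proof deferred as a \emph{mutatis mutandis} adaptation of \cref{lem:forbid-extensions}, with the hint $N = d^{\ell+v}$ and $d = R(k_1,k_2,3^{v+1}m+v)$, and your argument supplies precisely that adaptation. Your nested two-colour Ramsey $R(k_1, R(k_2, K))$ is an equivalent substitute for the three-colour Ramsey number in the hint, the extraction of an all-positive clique from the forbidden $-K_{k_2}$ is right, the $3^v$ signed pigeonhole is right, and your observation that re-switching $V$ to canonicalize $F^\pm$ merely changes $A^\pm$ to another signed subset (harmless, since $\X^\pm(F^\pm,\ell,m)$ ranges over all signed subsets) is the key sign-theoretic point.

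There is, however, one concrete omission in your Case~2, $A^\pm=\varnothing$ branch: you never bound $\ell_0$, the length of the shortest path between $V$ and $U(\varnothing)$. The path-clique extensions you produce, $(F^\pm, A_1^\pm, \ell_0, K_m)$ and $(F^\pm, A_1^\pm, \ell_0+1, K_m)$, lie in $\X^\pm(F^\pm,\ell,m)$ only when the path parameter is at most $\ell-1$, so as written no contradiction is reached when $\ell_0 \ge \ell-1$. The fix is the sentence from the unsigned proof you are paralleling: if $\ell_0 \ge \ell - 1$, then $V$ together with the first $\ell+1$ vertices of the induced path $v_0\dots v_{\ell_0}v_{\ell_0+1}$ (which has no chords and no edges to $V$ other than those at $v_0$, by the shortest-path property) is switching equivalent to a path extension $(F^\pm, A_1^\pm, \ell)$, already forbidden; so one may assume $\ell_0 < \ell - 1$ before splitting into your two subcases. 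A second, smaller slip: in your second subcase the $v_{\ell_0+1}$-to-clique edges are positive in $G^\pm$, but after you switch $v_1,\dots,v_{\ell_0+1}$ to make the path all-positive they may all flip; this is repaired exactly as in your first subcase by additionally switching the whole clique, which preserves its internal positive edges and touches no edges to $V$ or to earlier path vertices. With these two points added, the proof is complete and coincides with the paper's intended argument.
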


We leave the proof to the reader as one can make the proof of \cref{lem:forbid-extensions} into that for \cref{lem:forbid-extensions-signed} by \emph{mutatis mutandis}. To get started, one might want to take $N = vd^\ell$, where $d$ is the Ramsey number $R(k_1, k_2, 3^{v+1} m + v)$.

The last ingredient is a sufficient condition for signed line graphs, which generalizes \cref{thm:claw-diamond}.

\begin{definition}[Bidirected graph and signed line graph] \label{def:signed-line-graph}
    A \emph{bidirected graph} is a graph in which each vertex-edge incidence has a positive or negative sign. We decorate variables for bidirected graphs with an arrow on the top. A signed graph $G^\pm$ is the \emph{signed line graph} of a bidirected graph $\vec{H}$ if the underlying graph of $G^\pm$ is the line graph of $\vec{H}$, and moreover for every two distinct edges $e_1$ and $e_2$ of $\vec{H}$ having a vertex $v$ in common, the sign of the edge $e_1 e_2$ in $G^\pm$ is the product of the signs of the two incidences $(v, e_1)$ and $(v, e_2)$ in $\vec{H}$.
\end{definition}

Pictorially we place a sign close to each incidence in a bidirected graph. See \cref{fig:bidirected-graph} for an example of a bidirected graph and its signed line graph.

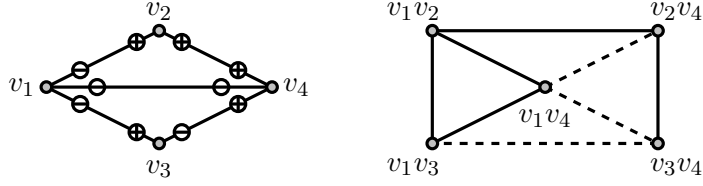
\begin{figure}
    \centering
    \begin{tikzpicture}[
        very thick, scale=0.9, baseline=(v.base),
        neg/.pic = {\draw[fill=white] (0,0) circle (0.1); \draw (-0.1,0)--(0.1,0);},
        pos/.pic = {\draw[fill=white] (0,0) circle (0.1); \draw (-0.1,0)--(0.1,0); \draw (0,0.1)--(0,-0.1);},
    ]
        \coordinate (v) at (0,0);
        \draw (-2,0) -- (0,1) -- (2,0) -- (0,-1) -- (-2,0) -- (2,0);
        \draw (-1.4,0.3) pic {neg};
        \draw (-0.4,0.8) pic {pos};
        \draw (0.4,0.8) pic {pos};
        \draw (1.4,0.3) pic {pos};
        \draw (-1.4,-0.3) pic {neg};
        \draw (-0.4,-0.8) pic {pos};
        \draw (1.4,-0.3) pic {pos};
        \draw (0.4,-0.8) pic {neg};
        \draw (-1.1,0) pic {neg};
        \draw (1.1,0) pic {neg};
        \node[vertex] at (-2,0) {};
        \node[vertex] at (2,0) {};
        \node[vertex] at (0,-1) {};
        \node[vertex] at (0,1) {};
        \draw (-2,0) node[left]{$v_1$};
        \draw (0,1) node[above]{$v_2$};
        \draw (0,-1.1) node[below]{$v_3$};
        \draw (2,0) node[right]{$v_4$};
    \end{tikzpicture}\qquad
    \begin{tikzpicture}[very thick, scale=0.6, baseline=(v.base)]
        \coordinate (v) at (0,0);
        \draw (-3,1.5) -- (3,1.5) -- (3,-1.5);
        \draw[dashed] (3,-1.5) -- (-3,-1.5);
        \draw (-3,-1.5) -- (-3,1.5) -- (0,0);
        \draw[dashed] (0,0) -- (3,-1.5);
        \draw (-3,-1.5) -- (0,0);
        \draw[dashed] (0,0) -- (3,1.5);
        \node[vertex] at (-3,1.5) {};
        \node[vertex] at (3,-1.5) {};
        \node[vertex] at (-3,-1.5) {};
        \node[vertex] at (3,1.5) {};
        \node[vertex] at (0,0) {};
        \node[below] at (0,-0.3) {$v_1v_4$};
        \node[above] at (-3.5,1.5) {$v_1v_2$};
        \node[above] at (3.5,1.5) {$v_2v_4$};
        \node[below] at (-3.5,-1.6) {$v_1v_3$};
        \node[below] at (3.5,-1.6) {$v_3v_4$};
    \end{tikzpicture}
    \caption{A bidirected graph and its signed line graph. In a signed graph, the positive edges are represented by solid segments and the negative edges are represented by dashed segments.} \label{fig:bidirected-graph}
\end{figure}

\begin{remark}
    Our definition of a signed line graph is different from the existing literature --- the conventional definition (e.g.\ Zaslavsky~\cite{Z10}) reverses all the edge signs of $G^\pm$ in \cref{def:signed-line-graph}. Our choice of the definition is based on aesthetic reasons: the signed line graph of an all-positive bidirected graph is still all-positive, and the smallest eigenvalue of a signed line graph is at least $-2$.
\end{remark}

\begin{lemma}[Proposition~2.2 of Vijayakumar~\cite{V87}] \label{lem:claw-diamond-signed}
    For every signed graph $G^\pm$, if $G^\pm$ does not contain any subgraph that is switching equivalent to the claw graph, the diamond graph, or the all-negative triangle, then $G^\pm$ is a signed line graph.
\end{lemma}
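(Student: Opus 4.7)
The plan is to reduce the statement to Theorem~\ref{thm:claw-diamond}. First I would verify that the underlying unsigned graph $G$ of $G^\pm$ contains neither the claw nor the diamond. For the claw, every signing of a tree is switching equivalent to the all-positive signing, so any claw in $G$ would lift to a subgraph of $G^\pm$ switching equivalent to the all-positive claw, contradicting the hypothesis. For the diamond, note that a signed triangle is switching equivalent to the all-positive triangle precisely when it is \emph{balanced} (the product of its edge signs equals $+1$), and to the all-negative triangle $-K_3$ otherwise. A diamond consists of two triangles sharing an edge: if both triangles are balanced then so is the whole diamond, which is therefore switching equivalent to the all-positive diamond; if either triangle is unbalanced, then $G^\pm$ contains a subgraph switching equivalent to $-K_3$. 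Either case contradicts the hypothesis, so $G$ has no diamond.

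Applying Theorem~\ref{thm:claw-diamond} gives a simple graph $H$ with $G = L(H)$. It remains to equip $H$ with incidence signs so that the resulting bidirected graph $\vec{H}$ has $G^\pm$ as its signed line graph. The key observation is that the hypothesis forbids every unbalanced triangle in $G^\pm$, since any such triangle is switching equivalent to $-K_3$. For each vertex $v$ of $H$, the edges of $H$ at $v$ correspond to the vertices of a clique $K_v \subseteq G$. I would assign incidence signs $\sigma(v, e) \in \sset{\pm 1}$ locally at $v$ as follows: pick a distinguished edge $e^*$ at $v$, set $\sigma(v, e^*) := +1$, and for every other edge $e$ at $v$ let $\sigma(v, e)$ be the sign of the edge $L(e^*) L(e)$ in $G^\pm$. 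For any two distinct edges $e, e'$ incident to $v$ (both different from $e^*$), the balanced-triangle property applied to $\sset{L(e^*), L(e), L(e')} \subseteq K_v$ yields $\sigma(v, e)\sigma(v, e')$ equal to the sign of $L(e)L(e')$ in $G^\pm$, as required. Repeating this at every vertex of $H$ produces $\vec{H}$; the two incidences of a single edge of $H$ get signs from two independent local constructions, which is permissible because the sign of each edge $L(e)L(e')$ of $G^\pm$ is determined by the incidences at the unique vertex shared by $e$ and $e'$ in the simple graph $H$.

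The main obstacle is the consistent assignment of incidence signs within each clique $K_v$, and the forbidden subgraph $-K_3$ is precisely what rules it out: an unbalanced triangle inside some $K_v$ would correspond to three edges of $H$ meeting at a common vertex whose incidence signs could not simultaneously satisfy the required product relation. With the $-K_3$-freeness in hand, the local construction at each $v$ succeeds and the global bidirection $\vec{H}$ assembles automatically.
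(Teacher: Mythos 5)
Your proposal is correct and follows essentially the same route as the paper: pass to the underlying graph, invoke \cref{thm:claw-diamond} to get $G = L(H)$, and then define the incidence signs at each vertex $v$ of $H$ by anchoring at a reference edge and setting $\sigma(v,e) = A_{G^\pm}(e_0,e)$, with the absence of subgraphs switching equivalent to $-K_3$ guaranteeing the required product relation $\sigma(v,e)\sigma(v,e') = A_{G^\pm}(e,e')$. Your verification that the underlying graph is claw- and diamond-free (via balance of forests and the two-triangle cycle-space argument) just fills in the paper's ``one can check'' step, so no substantive difference remains.
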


Although the conclusion in \cite[Proposition~2.2]{V87} is weaker than that in \cref{lem:claw-diamond-signed}, Vijayakumar's proof already gives \cref{lem:claw-diamond-signed}. Below we reproduce his proof in terms of signed line graphs.

\begin{proof}
    Let $G$ be the underlying graph of $G^\pm$. One can check that $G$ contains neither the claw graph nor the diamond graph as a subgraph. Thus \cref{thm:claw-diamond} implies that $G$ is a line graph of another graph, denoted $H$, without isolated vertices. We may identify the vertex set of $G^\pm$ with the edge set of $H$.
    
    For every $v \in V(H)$, let $E_v(H)$ be the set of edges that are incident to $v$ in $H$. Define a signing $\sigma \colon \dset{(v, e)}{v \in V(H), e \in E_v(H)} \to \sset{\pm1}$ such that for every $v \in V(H)$,
    \begin{equation} \label{eqn:bidirected-graph}
        \sigma(v, e_1)\sigma(v, e_2) = A_{G^\pm}(e_1,e_2) \text{ for distinct } e_1, e_2 \in E_v(H).
    \end{equation}
    Such a signing $\sigma$ can be chosen as follows: for every $v\in V(H)$, define $\sigma(v, e_0) = 1$ for some arbitrary $e_0 \in E_v(H)$; and define $\sigma(v, e) = A_{G^\pm}(e_0, e)$ for any other $e \in E_v(H)$. Since $G^\pm$ does not contain any subgraph that is switching equivalent to the all-negative triangle, it can be seen that \eqref{eqn:bidirected-graph} holds. Assigning $\sigma(v, e)$ to every incidence $(v, e)$ of $H$, we obtain a bidirected graph $\vec{H}$ whose signed line graph is $G^\pm$.
\end{proof}

Similar to \cref{lem:line-graph-of-tn}, we obtain a finite set of forbidden subgraphs for $\Gs$ that forces every sufficiently large connected signed graph to be the singed line graph of a bidirected tree whose complexity is uniformly bounded.

\begin{lemma} \label{lem:main2-claim}
    For every $\la < 2$, there exist $N \in \N$ and a finite family $\F_1^\pm$ that is disjoint from $\Gs$ such that for every connected signed graph $G^\pm$ with more than $N$ vertices, if $G^\pm$ contains no member in $\F_1^\pm$ as a subgraph, then there exists a bidirected rooted tree $\vec{H} \in \vec{\mathcal{T}}_N$ such that $G^\pm$ is the signed line graph of $\vec{H}$, where $\vec{\mathcal{T}}_N$ is the family of bidirected rooted tree such that every connected component obtained from removing the root has at most $N$ vertices.
\end{lemma}

\begin{proof}
    Denote $\mathcal{C}^\pm_n$ the set of signed cycles of length $n$. We obtain $\ell, m \in \N^+$ from \cref{lem:hoffman,lem:hoffman-signed,lem:hoffman-extension-signed} such that the following family
    \begin{align*}
        \widetilde{\F}_1^\pm := & \sset{S_4, P_\ell, -K_3} \cup \X^\pm(C, \ell, m) \cup \X^\pm(D, \ell, m) \\
        & \cup \dset{(C_n^\pm, V_2(C_n), \ell_0, K_m)}{n \in \sset{3, \dots, \ell + 1}, C_n^\pm \in \mathcal{C}_n^\pm, \ell_0 \in \sset{0, \dots, \ell - 1}} \\
        & \cup \dset{(C_n^\pm, V_2(C_n), K_m)}{n \in \sset{3, \dots, \ell + 1}, C_n^\pm \in \mathcal{C}_n^\pm} \\
        & \cup \dset{(K_m, V(K_m), \ell_0, K_m)}{\ell_0 \in \sset{0, \dots, \ell-1}}
    \end{align*}
    is disjoint from $\Gs$. Let $\F_1^\pm$ be the smallest family, that is closed under switching, containing $\widetilde{\F}_1^\pm$. Clearly $\F_1^\pm$ is also disjoint from $\Gs$.
    
    We omit the rest of the proof as it, \emph{mutatis mutandis}, is very much like the one in the proof of \cref{thm:main1} for $\la < 2$ on \cpageref{proof:main1-claim-start}. We point out that one should make use of \cref{lem:forbid-extensions-signed,lem:claw-diamond-signed} in place of \cref{lem:forbid-extensions,thm:claw-diamond} respectively.
\end{proof}

We are ready to present the proof of the second main theorem for $\la < 2$.

\begin{proof}[Proof of \cref{thm:main2} for $\la < 2$]
    Let $N \in \N$ and $\F_1^\pm$ be given by \cref{lem:main2-claim}, and set
    \begin{align*}
        \F_0^\pm & := \dset{G^\pm \not\in \Gs}{G^\pm \text{ has at most }N\text{ vertices}}, \\
        \widetilde{\F}_2^\pm & := \dset{G^\pm \not\in \Gs}{\exists \vec{H} \in \vec{\mathcal{T}}_N \text{ s.t. }G^\pm \text{ is the signed line graph of }\vec{H}},
    \end{align*}
    where $\vec{\mathcal{T}}_N$ is the family of bidirected rooted tree such that every connected component obtained from removing the root has at most $N$ vertices. Setting $\F_2^\pm$ to be the family of signed graphs that are minimal in $\widetilde{\F}_2^\pm$ under taking subgraphs, one can check that $\F_0^\pm \cup \F_1^\pm \cup \F_2^\pm$ is a forbidden subgraph characterization of $\Gs$.

    It suffices to prove that $\F_2^\pm$ is finite. Let $\vec{T}_1, \dots, \vec{T}_n$ be an enumeration of bidirected rooted trees $\vec{T}$ on at most $N+1$ vertices. We encode $G^\pm \in \F_2^\pm$ by $t_{G^\pm} \in \N^n$ as follows. Let $\vec{H}$ be the bidirected rooted tree in $\vec{\mathcal{T}}_N$ such that $G^\pm$ is the signed line graph of $\vec{H}$. After removing the root $u$ from $\vec{H}$, let $U_1, U_2, \dots, U_m$ be the vertex sets of the connected components. For $i \in \sset{1, \dots, m}$, we view the subgraph $\vec{H}_i := \vec{H}[\sset{u} \cup U_i]$ as a bidirected tree rooted at $u$. Set $t_{G^\pm} = (t_1, \dots, t_n)$, where $t_i$ is the number of occurrences of $\vec{T}_i$ in $\vec{H}_1, \dots, \vec{H}_m$. Because no member of $\F_2^\pm$ is a subgraph of any other, one can deduce that $\dset{t_{G^\pm}}{G^\pm \in \F_2^\pm}$ is an antichain in $(\N^n, \le)$, and so $\F_2^\pm$ is finite by \cref{lem:dickson}.
\end{proof}

\subsection{Proof of \texorpdfstring{\cref{thm:main2}}{the second main theorem} for \texorpdfstring{$\la \in [2, \la^*)$}{λ in [2, λ*)}}

We prove a generalization of \cref{thm:main1-2}, from which the second main theorem for $\la \in [2, \la^*)$ follows.

\begin{theorem} \label{thm:main2-2}
    For every $\la \in [2,\la^*)$, the number of connected signed graphs in $\Gs \setminus \G^\pm(2)$ is finite.
\end{theorem}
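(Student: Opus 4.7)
The plan is to follow the proof of \cref{thm:main1-2} with signed analogs at each step. First I would introduce a family $\D^\pm_\infty$ of \emph{signed generalized line graphs}, defined as signed line graphs of bidirected graphs equipped with signed cocktail-party-type attachments, so that $\D^\pm_\infty \subseteq \G^\pm(2)$ by construction. A signed analog of \cref{thm:cgss}, via representations into the root systems $D_n$ and $E_8$, should then yield (i) a finite set $\F^\pm$ of minimal forbidden signed subgraphs for $\D^\pm_\infty$ up to switching equivalence, and (ii) a constant $N_0$ such that every connected signed graph in $\G^\pm(2)$ with more than $N_0$ vertices lies in $\D^\pm_\infty$.

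Second, I would verify the signed analog of \cref{lem:computation-1}: for every $F^\pm \in \F^\pm$ and every nonempty signed vertex subset $A^\pm$ of $F^\pm$,
\[
    \lim_{\ell\to\infty}\la_1(F^\pm, A^\pm, \ell) \le -\la^* \quad\text{and}\quad \lim_{m\to\infty}\la_1(F^\pm, A^\pm, K_m) \le -\la^*.
\]
This is a finite computation amenable to the validated numerics of \cref{sec:numerics}. The proof of \cref{lem:path-clique-extension} is insensitive to edge signs since the path appended to $F^\pm$ is all-positive and both the Rayleigh-quotient argument and the three-term recurrence along that path only use this together with $\la \ge 2$; thus the signed version applies verbatim, producing $m \in \N^+$ such that every path-clique extension of every $F^\pm \in \F^\pm$ has smallest eigenvalue strictly less than $-\la$.

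Third, choosing $\ell$ and $m$ that work uniformly for all $F^\pm \in \F^\pm$, together with $k_1, k_2 \in \N^+$ such that $S_{k_1}, -K_{k_2} \not\in \G^\pm(\la)$ (using $\la_1(S_k) = -\sqrt{k}$ from \cref{lem:hoffman}(\ref{lem:s}) and $\la_1(-K_k) = -(k-1)$), the family $\sset{S_{k_1}, -K_{k_2}} \cup \bigcup_{F^\pm \in \F^\pm} \X^\pm(F^\pm, \ell, m)$ together with its closure under switching is disjoint from $\G^\pm(\la)$. Applying \cref{lem:forbid-extensions-signed} produces $N \ge N_0$ such that every connected signed graph in $\G^\pm(\la)$ with more than $N$ vertices avoids every $F^\pm \in \F^\pm$ up to switching, hence lies in $\D^\pm_\infty \subseteq \G^\pm(2)$; this yields the finiteness claim.

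The main obstacle is identifying the minimal forbidden signed subgraphs of $\D^\pm_\infty$ and verifying the spectral bound for each. While \cref{thm:forb-char-generalized-line} lists 31 minimal forbidden subgraphs in the unsigned setting, each unsigned member may split into several switching-inequivalent signed variants, and the exceptional $E_8$ representations may contribute further cases. Cleanly enumerating this signed list --- either by invoking an existing signed Cameron--Goethals--Seidel--Shult structure theorem or by adapting the proof of \cref{thm:cgss} --- is the substantive new work, but once it is in hand the extension computations reduce to a finite case analysis handled exactly as in \cref{sec:numerics}.
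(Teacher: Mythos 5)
Your proposal follows essentially the same route as the paper: the paper defines $\D^\pm_\infty$ as the signed graphs represented by subsets of the root system $D_n$, invokes the existing theorem of Chawathe and Vijayakumar for its finite list of minimal forbidden subgraphs up to switching (the ``substantive new work'' you flag is exactly this citation), verifies the signed extension limits by validated numerics (\cref{lem:computation-2}), observes that \cref{lem:path-clique-extension} carries over verbatim to the signed setting, and then combines $S_5$, $-K_4$ and \cref{lem:forbid-extensions-signed} to bound the size of connected signed graphs in $\Gs\setminus\G^\pm(2)$, just as you describe. So your plan is correct and matches the paper's proof.
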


\begin{proof}[Proof of \cref{thm:main2} for $\la \in [2, \la^*)$]
    Using the fact that every minimal forbidden subgraph for $\G^\pm(2)$ has at most $10$ vertices (see \cite{V87}), and \cref{thm:main2-2}, one can prove this case by following the argument in the proof of \cref{thm:main1} for $\la \in [2, \la^*)$ on \cpageref{proof:main1-0}.
\end{proof}

Recall from \cref{thm:cgss}\ref{thm:cgss-dn} that a graph is a generalized line graph if and only if it is represented by a subset of the root system $D_n$ defined in \cref{def:dn-e8}. For signed graphs, we work directly with those that are represented by a subset of $D_n$.

\begin{definition}[Representation of signed graphs]
    Given a signed graph $G^\pm$ and a subset $V$ of $\R^n$, we say that $G^\pm$ is \emph{represented} by $V$ if the Gram matrix of $V$ is equal to $A_{G^\pm} + 2I$, where $A_{G^\pm}$ is the signed adjacency matrix of $G^\pm$.
\end{definition}

These signed graphs that are represented by a subset of $D_n$, like generalized line graphs, also have a finite forbidden subgraph characterization.

\begin{theorem}[Chawathe and Vijayakumar~\cite{CV90}] \label{thm:cv}
    The minimal forbidden subgraphs for the family $\D_\infty^\pm$ of signed graphs that are represented by a subset of $D_n$ are listed in \cref{fig:minimal-forb-1,fig:minimal-forb-2} up to switching equivalence. \qed
\end{theorem}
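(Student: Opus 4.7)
The plan is to establish the characterization by proving both directions separately and leveraging the linear-algebraic structure of $D_n$. Recall that $G^\pm$ is represented by a subset of $D_n$ exactly when $A_{G^\pm} + 2I$ admits a Gram decomposition by vectors of the form $\pm e_i \pm e_j$, so in particular $A_{G^\pm} + 2I$ must be positive semidefinite and the vectors must satisfy the combinatorial compatibility dictated by $D_n$ (pairwise inner products lying in $\{-1, 0, 1\}$, and no two vectors equal or antipodal).

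First I would dispatch the necessity direction: for each signed graph $F^\pm$ in the list I would show that $F^\pm \notin \D_\infty^\pm$ and that every proper induced subgraph of $F^\pm$ does lie in $\D_\infty^\pm$. Since $\D_\infty^\pm$ is closed under taking induced subgraphs and under switching, it suffices to verify a finite check. For each $F^\pm$, one attempts to construct the vectors greedily after an arbitrary switching normalization. Either a concrete realization can be exhibited for each proper subgraph (minimality), or the greedy process collides with the $D_n$-compatibility rule (non-representability), typically because some triangle or near-triangle configuration forces two vectors to coincide or to differ by a sign.

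The hard part will be the sufficiency direction: showing that every signed graph $G^\pm$ avoiding all graphs in Figures~\ref{fig:minimal-forb-1} and \ref{fig:minimal-forb-2} (up to switching) belongs to $\D_\infty^\pm$. I would proceed by induction on $|V(G^\pm)|$, handling the base case by direct enumeration. For the inductive step, remove a vertex $v$ from $G^\pm$; by induction $G^\pm - v$ has a representation by vectors $\{w_u\}_{u \ne v}$ in some $D_n$. The task is to produce $w_v \in D_{n'}$ for some $n' \ge n$ with the prescribed inner products against all $w_u$. This is essentially a local extension problem at $v$: decompose the neighborhood of $v$ according to the support patterns of the $w_u$ on coordinate pairs, and show that the only local configurations that obstruct extending are precisely (switching-equivalent to) induced subgraphs from the forbidden list. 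The main technical obstacle is bookkeeping: one must carefully track how the vectors $w_u$ share coordinates, use switching to normalize signs of incidences with $v$, and identify each obstruction with a specific member of the list. When no such obstruction occurs, one can either reuse existing coordinates or introduce a fresh one to place $w_v$.

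Finally, a streamlining observation: since $\D_\infty^\pm \subseteq \G^\pm(2)$ and the classification of signed graphs with smallest eigenvalue at least $-2$ (an analogue of Theorem~\ref{thm:cgss} for signed graphs, via root systems) splits into those representable by $D_n$ and those representable by $E_8$, the forbidden graphs for $\D_\infty^\pm$ naturally split into two kinds: (i) graphs with smallest eigenvalue strictly less than $-2$, and (ii) graphs with smallest eigenvalue exactly $-2$ but which are representable only by $E_8$, hence obstructed by the bounded size and degree of $E_8$-representable graphs (analogous to Theorem~\ref{thm:cgss}(b)). Verifying the list matches these two sources, perhaps with a short computer enumeration up to the $E_8$-vertex bound, would complete the proof.
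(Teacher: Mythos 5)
You should note at the outset that the paper does not prove this statement at all: it is imported verbatim from Chawathe and Vijayakumar~\cite{CV90} and used as a black box (hence the immediate \qed), so there is no internal argument to compare yours against; what you have written is a sketch of the external classification itself. Your necessity half is unobjectionable in principle --- showing each graph in \cref{fig:minimal-forb-1,fig:minimal-forb-2} is not in $\D_\infty^\pm$ while all its proper subgraphs are is a finite, checkable task --- but the sufficiency half contains the real content and, as written, has a genuine gap.

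The inductive step ``take a representation of $G^\pm-v$ and extend it by a vector $w_v$'' is not sound as stated, because representations by subsets of a root system are far from unique: the representation of $G^\pm-v$ handed to you by the inductive hypothesis may occupy $D_n$ in a way that admits no vector with the prescribed inner products, even though a different representation of the same subgraph (or the same one after an automorphism of $D_n$) would extend and $G^\pm$ itself lies in $\D_\infty^\pm$. To repair this you must either prove a rigidity statement for the representations you retain or strengthen the inductive hypothesis to quantify over representations, and then show that \emph{every} unextendable local configuration contains one of the listed graphs up to switching --- but that identification is precisely the theorem, not ``bookkeeping''; deferring it leaves the proof empty at its core. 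Your closing ``streamlining'' does not rescue this: minimal forbidden subgraphs with smallest eigenvalue below $-2$ are minimal forbidden subgraphs for $\G^\pm(2)$, whose determination is itself a nontrivial classification (Vijayakumar's $10$-vertex bound), and for the $E_8$-only case the relevant bound for \emph{signed} graphs is the $120$ roots of $E_8$ up to sign, not the $36$ vertices of \cref{thm:cgss}(b), so the promised ``short computer enumeration'' needs its own a priori bound on the order of minimal forbidden subgraphs before it is even feasible. In short, the outline identifies the right two directions but leaves the decisive extension/obstruction analysis --- the substance of \cite{CV90} --- unproved.
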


\begin{figure}
    \centering
    \begin{tikzpicture}[very thick, baseline=(v.base)]
    \coordinate (v) at (0,.09549150281);
    \coordinate (v0) at (0,-.20601132958);
    \coordinate (v1) at (0,1);
    \coordinate (v2) at (-.95105651629,-.80901699437);
    \coordinate (v3) at (.95105651629,-.80901699437);
    \draw (v0) -- (v1);
    \draw (v1) -- (v2);
    \draw (v1) -- (v3);
    \draw[dashed] (v2) -- (v3);
    \node[circ] at (v0) {0};
    \node[circ] at (v1) {1};
    \node[circ] at (v2) {2};
    \node[circ] at (v3) {3};
    \node at (0,-1.30450849719) {$S_{32}$};
    \end{tikzpicture}
    \begin{tikzpicture}[very thick, baseline=(v.base)]
    \coordinate (v) at (0,.09549150281);
    \coordinate (v0) at (0,-.20601132958);
    \coordinate (v1) at (0,1);
    \coordinate (v2) at (-.95105651629,-.80901699437);
    \coordinate (v3) at (.95105651629,-.80901699437);
    \draw (v0) -- (v2);
    \draw (v0) -- (v3);
    \draw (v1) -- (v2);
    \draw (v1) -- (v3);
    \draw[dashed] (v2) -- (v3);
    \node[circ] at (v0) {0};
    \node[circ] at (v1) {1};
    \node[circ] at (v2) {2};
    \node[circ] at (v3) {3};
    \node at (0,-1.30450849719) {$S_{33}$};
    \end{tikzpicture}
    \begin{tikzpicture}[very thick, baseline=(v.base)]
    \coordinate (v) at (0,.09549150281);
    \coordinate (v0) at (0,-.20601132958);
    \coordinate (v1) at (0,1);
    \coordinate (v2) at (-.95105651629,-.80901699437);
    \coordinate (v3) at (.95105651629,-.80901699437);
    \draw (v0) -- (v2);
    \draw (v0) -- (v3);
    \draw (v1) -- (v2);
    \draw (v1) -- (v3);
    \draw[dashed] (v0) -- (v1);
    \draw[dashed] (v2) -- (v3);
    \node[circ] at (v0) {0};
    \node[circ] at (v1) {1};
    \node[circ] at (v2) {2};
    \node[circ] at (v3) {3};
    \node at (0,-1.30450849719) {$S_{34}$};
    \end{tikzpicture}
    \begin{tikzpicture}[very thick, baseline=(v.base)]
    \coordinate (v) at (0,.09549150281);
    \coordinate (v0) at (0,-.20601132958);
    \coordinate (v1) at (0,1);
    \coordinate (v2) at (-.95105651629,-.80901699437);
    \coordinate (v3) at (.95105651629,-.80901699437);
    \draw (v0) -- (v1);
    \draw (v0) -- (v2);
    \draw (v0) -- (v3);
    \draw (v1) -- (v2);
    \draw (v1) -- (v3);
    \draw[dashed] (v2) -- (v3);
    \node[circ] at (v0) {0};
    \node[circ] at (v1) {1};
    \node[circ] at (v2) {2};
    \node[circ] at (v3) {3};
    \node at (0,-1.30450849719) {$S_{35}$};
    \end{tikzpicture}
    \begin{tikzpicture}[very thick, baseline=(v.base)]
    \coordinate (v) at (0,.09549150281);
    \coordinate (v0) at (0,0);
    \path (v0) + (90:1) coordinate (v1);
    \path (v0) + (162:1) coordinate (v2);
    \path (v0) + (234:1) coordinate (v3);
    \path (v0) + (306:1) coordinate (v4);
    \path (v0) + (378:1) coordinate (v5);
    \draw (v0) -- (v1);
    \draw (v1) -- (v2);
    \draw (v1) -- (v5);
    \draw (v2) -- (v3);
    \draw (v4) -- (v5);
    \draw[dashed] (v3) -- (v4);
    \node[circ] at (v0) {0};
    \node[circ] at (v1) {1};
    \node[circ] at (v2) {2};
    \node[circ] at (v3) {3};
    \node[circ] at (v4) {4};
    \node[circ] at (v5) {5};
    \node at (0,-1.30450849719) {$S_{36}$};
    \end{tikzpicture}
    \begin{tikzpicture}[very thick, baseline=(v.base)]
    \coordinate (v) at (0,.09549150281);
    \coordinate (v0) at (0,0);
    \path (v0) + (90:1) coordinate (v1);
    \path (v0) + (162:1) coordinate (v2);
    \path (v0) + (234:1) coordinate (v3);
    \path (v0) + (306:1) coordinate (v4);
    \path (v0) + (378:1) coordinate (v5);
    \draw (v0) -- (v3);
    \draw (v0) -- (v4);
    \draw (v1) -- (v2);
    \draw (v1) -- (v5);
    \draw (v2) -- (v3);
    \draw[dashed] (v0) -- (v1);
    \node[circ] at (v0) {0};
    \node[circ] at (v1) {1};
    \node[circ] at (v2) {2};
    \node[circ] at (v3) {3};
    \node[circ] at (v4) {4};
    \node[circ] at (v5) {5};
    \node at (0,-1.30450849719) {$S_{37}$};
    \end{tikzpicture}
    \begin{tikzpicture}[very thick, baseline=(v.base)]
    \coordinate (v) at (0,.09549150281);
    \coordinate (v0) at (0,0);
    \path (v0) + (90:1) coordinate (v1);
    \path (v0) + (162:1) coordinate (v2);
    \path (v0) + (234:1) coordinate (v3);
    \path (v0) + (306:1) coordinate (v4);
    \path (v0) + (378:1) coordinate (v5);
    \draw (v0) -- (v3);
    \draw (v0) -- (v4);
    \draw (v1) -- (v2);
    \draw (v1) -- (v5);
    \draw (v2) -- (v3);
    \draw (v4) -- (v5);
    \draw[dashed] (v1) -- (v0);
    \node[circ] at (v0) {0};
    \node[circ] at (v1) {1};
    \node[circ] at (v2) {2};
    \node[circ] at (v3) {3};
    \node[circ] at (v4) {4};
    \node[circ] at (v5) {5};
    \node at (0,-1.30450849719) {$S_{38}$};
    \end{tikzpicture}
    \begin{tikzpicture}[very thick, baseline=(v.base)]
    \coordinate (v) at (0,.09549150281);
    \coordinate (v0) at (0,0);
    \path (v0) + (90:1) coordinate (v1);
    \path (v0) + (162:1) coordinate (v2);
    \path (v0) + (234:1) coordinate (v3);
    \path (v0) + (306:1) coordinate (v4);
    \path (v0) + (378:1) coordinate (v5);
    \draw (v0) -- (v3);
    \draw (v0) -- (v4);
    \draw (v1) -- (v2);
    \draw (v1) -- (v5);
    \draw (v2) -- (v3);
    \draw (v3) -- (v4);
    \draw[dashed] (v0) -- (v1);
    \node[circ] at (v0) {0};
    \node[circ] at (v1) {1};
    \node[circ] at (v2) {2};
    \node[circ] at (v3) {3};
    \node[circ] at (v4) {4};
    \node[circ] at (v5) {5};
    \node at (0,-1.30450849719) {$S_{39}$};
    \end{tikzpicture}
    \begin{tikzpicture}[very thick, baseline=(v.base)]
    \coordinate (v) at (0,.09549150281);
    \coordinate (v0) at (0,0);
    \path (v0) + (90:1) coordinate (v1);
    \path (v0) + (162:1) coordinate (v2);
    \path (v0) + (234:1) coordinate (v3);
    \path (v0) + (306:1) coordinate (v4);
    \path (v0) + (378:1) coordinate (v5);
    \draw (v0) -- (v1);
    \draw (v0) -- (v2);
    \draw (v0) -- (v3);
    \draw (v0) -- (v5);
    \draw (v2) -- (v3);
    \draw (v4) -- (v5);
    \draw[dashed] (v3) -- (v4);
    \node[circ] at (v0) {0};
    \node[circ] at (v1) {1};
    \node[circ] at (v2) {2};
    \node[circ] at (v3) {3};
    \node[circ] at (v4) {4};
    \node[circ] at (v5) {5};
    \node at (0,-1.30450849719) {$S_{40}$};
    \end{tikzpicture}
    \begin{tikzpicture}[very thick, baseline=(v.base)]
    \coordinate (v) at (0,.09549150281);
    \coordinate (v0) at (0,0);
    \path (v0) + (90:1) coordinate (v1);
    \path (v0) + (162:1) coordinate (v2);
    \path (v0) + (234:1) coordinate (v3);
    \path (v0) + (306:1) coordinate (v4);
    \path (v0) + (378:1) coordinate (v5);
    \draw (v0) -- (v3);
    \draw (v0) -- (v4);
    \draw (v1) -- (v2);
    \draw (v1) -- (v5);
    \draw (v2) -- (v3);
    \draw (v3) -- (v4);
    \draw (v4) -- (v5);
    \draw[dashed] (v0) -- (v1);
    \node[circ] at (v0) {0};
    \node[circ] at (v1) {1};
    \node[circ] at (v2) {2};
    \node[circ] at (v3) {3};
    \node[circ] at (v4) {4};
    \node[circ] at (v5) {5};
    \node at (0,-1.30450849719) {$S_{41}$};
    \end{tikzpicture}
    \begin{tikzpicture}[very thick, baseline=(v.base)]
    \coordinate (v) at (0,.09549150281);
    \coordinate (v0) at (0,0);
    \path (v0) + (90:1) coordinate (v1);
    \path (v0) + (162:1) coordinate (v2);
    \path (v0) + (234:1) coordinate (v3);
    \path (v0) + (306:1) coordinate (v4);
    \path (v0) + (378:1) coordinate (v5);
    \draw (v0) -- (v1);
    \draw (v0) -- (v2);
    \draw (v0) -- (v5);
    \draw (v1) -- (v2);
    \draw (v1) -- (v5);
    \draw (v2) -- (v3);
    \draw (v4) -- (v5);
    \draw[dashed] (v3) -- (v4);
    \node[circ] at (v0) {0};
    \node[circ] at (v1) {1};
    \node[circ] at (v2) {2};
    \node[circ] at (v3) {3};
    \node[circ] at (v4) {4};
    \node[circ] at (v5) {5};
    \node at (0,-1.30450849719) {$S_{42}$};
    \end{tikzpicture}
    \begin{tikzpicture}[very thick, baseline=(v.base)]
    \coordinate (v) at (0,.09549150281);
    \coordinate (v0) at (0,0);
    \path (v0) + (90:1) coordinate (v1);
    \path (v0) + (162:1) coordinate (v2);
    \path (v0) + (234:1) coordinate (v3);
    \path (v0) + (306:1) coordinate (v4);
    \path (v0) + (378:1) coordinate (v5);
    \draw (v0) -- (v1);
    \draw (v0) -- (v2);
    \draw (v0) -- (v4);
    \draw (v0) -- (v5);
    \draw (v1) -- (v2);
    \draw (v1) -- (v5);
    \draw (v2) -- (v3);
    \draw[dashed] (v3) -- (v4);
    \node[circ] at (v0) {0};
    \node[circ] at (v1) {1};
    \node[circ] at (v2) {2};
    \node[circ] at (v3) {3};
    \node[circ] at (v4) {4};
    \node[circ] at (v5) {5};
    \node at (0,-1.30450849719) {$S_{43}$};
    \end{tikzpicture}
    \begin{tikzpicture}[very thick, baseline=(v.base)]
    \coordinate (v) at (0,.09549150281);
    \coordinate (v0) at (0,0);
    \path (v0) + (90:1) coordinate (v1);
    \path (v0) + (162:1) coordinate (v2);
    \path (v0) + (234:1) coordinate (v3);
    \path (v0) + (306:1) coordinate (v4);
    \path (v0) + (378:1) coordinate (v5);
    \draw (v0) -- (v2);
    \draw (v0) -- (v3);
    \draw (v0) -- (v5);
    \draw (v1) -- (v5);
    \draw (v2) -- (v3);
    \draw (v2) -- (v5);
    \draw (v4) -- (v5);
    \draw[dashed] (v3) -- (v4);
    \node[circ] at (v0) {0};
    \node[circ] at (v1) {1};
    \node[circ] at (v2) {2};
    \node[circ] at (v3) {3};
    \node[circ] at (v4) {4};
    \node[circ] at (v5) {5};
    \node at (0,-1.30450849719) {$S_{44}$};
    \end{tikzpicture}
    \begin{tikzpicture}[very thick, baseline=(v.base)]
    \coordinate (v) at (0,.09549150281);
    \coordinate (v0) at (0,0);
    \path (v0) + (90:1) coordinate (v1);
    \path (v0) + (162:1) coordinate (v2);
    \path (v0) + (234:1) coordinate (v3);
    \path (v0) + (306:1) coordinate (v4);
    \path (v0) + (378:1) coordinate (v5);
    \draw (v0) -- (v2);
    \draw (v0) -- (v3);
    \draw (v1) -- (v2);
    \draw (v1) -- (v4);
    \draw (v1) -- (v5);
    \draw (v2) -- (v3);
    \draw (v2) -- (v5);
    \draw (v4) -- (v5);
    \draw[dashed] (v3) -- (v4);
    \node[circ] at (v0) {0};
    \node[circ] at (v1) {1};
    \node[circ] at (v2) {2};
    \node[circ] at (v3) {3};
    \node[circ] at (v4) {4};
    \node[circ] at (v5) {5};
    \node at (0,-1.30450849719) {$S_{45}$};
    \end{tikzpicture}
    \begin{tikzpicture}[very thick, baseline=(v.base)]
    \coordinate (v) at (0,.09549150281);
    \coordinate (v0) at (0,0);
    \path (v0) + (90:1) coordinate (v1);
    \path (v0) + (162:1) coordinate (v2);
    \path (v0) + (234:1) coordinate (v3);
    \path (v0) + (306:1) coordinate (v4);
    \path (v0) + (378:1) coordinate (v5);
    \draw (v0) -- (v2);
    \draw (v0) -- (v3);
    \draw (v0) -- (v5);
    \draw (v1) -- (v2);
    \draw (v1) -- (v5);
    \draw (v2) -- (v3);
    \draw (v2) -- (v5);
    \draw (v4) -- (v5);
    \draw[dashed] (v3) -- (v4);
    \node[circ] at (v0) {0};
    \node[circ] at (v1) {1};
    \node[circ] at (v2) {2};
    \node[circ] at (v3) {3};
    \node[circ] at (v4) {4};
    \node[circ] at (v5) {5};
    \node at (0,-1.30450849719) {$S_{46}$};
    \end{tikzpicture}
    \begin{tikzpicture}[very thick, baseline=(v.base)]
    \coordinate (v) at (0,.09549150281);
    \coordinate (v0) at (0,0);
    \path (v0) + (90:1) coordinate (v1);
    \path (v0) + (162:1) coordinate (v2);
    \path (v0) + (234:1) coordinate (v3);
    \path (v0) + (306:1) coordinate (v4);
    \path (v0) + (378:1) coordinate (v5);
    \draw (v0) -- (v1);
    \draw (v0) -- (v2);
    \draw (v0) -- (v3);
    \draw (v0) -- (v5);
    \draw (v1) -- (v2);
    \draw (v1) -- (v4);
    \draw (v1) -- (v5);
    \draw (v2) -- (v3);
    \draw (v4) -- (v5);
    \draw[dashed] (v3) -- (v4);
    \node[circ] at (v0) {0};
    \node[circ] at (v1) {1};
    \node[circ] at (v2) {2};
    \node[circ] at (v3) {3};
    \node[circ] at (v4) {4};
    \node[circ] at (v5) {5};
    \node at (0,-1.30450849719) {$S_{47}$};
    \end{tikzpicture}
    \begin{tikzpicture}[very thick, baseline=(v.base)]
    \coordinate (v) at (0,.09549150281);
    \coordinate (v0) at (0,0);
    \path (v0) + (90:1) coordinate (v1);
    \path (v0) + (162:1) coordinate (v2);
    \path (v0) + (234:1) coordinate (v3);
    \path (v0) + (306:1) coordinate (v4);
    \path (v0) + (378:1) coordinate (v5);
    \draw (v0) -- (v1);
    \draw (v0) -- (v2);
    \draw (v0) -- (v3);
    \draw (v0) -- (v5);
    \draw (v1) -- (v2);
    \draw (v1) -- (v5);
    \draw (v2) -- (v3);
    \draw (v2) -- (v5);
    \draw (v4) -- (v5);
    \draw[dashed] (v3) -- (v4);
    \node[circ] at (v0) {0};
    \node[circ] at (v1) {1};
    \node[circ] at (v2) {2};
    \node[circ] at (v3) {3};
    \node[circ] at (v4) {4};
    \node[circ] at (v5) {5};
    \node at (0,-1.30450849719) {$S_{48}$};
    \end{tikzpicture}
    \begin{tikzpicture}[very thick, baseline=(v.base)]
    \coordinate (v) at (0,.09549150281);
    \coordinate (v0) at (0,0);
    \path (v0) + (90:1) coordinate (v1);
    \path (v0) + (162:1) coordinate (v2);
    \path (v0) + (234:1) coordinate (v3);
    \path (v0) + (306:1) coordinate (v4);
    \path (v0) + (378:1) coordinate (v5);
    \draw (v0) -- (v1);
    \draw (v0) -- (v2);
    \draw (v0) -- (v3);
    \draw (v0) -- (v5);
    \draw (v1) -- (v2);
    \draw (v1) -- (v4);
    \draw (v1) -- (v5);
    \draw (v2) -- (v3);
    \draw (v2) -- (v5);
    \draw (v4) -- (v5);
    \draw[dashed] (v3) -- (v4);
    \node[circ] at (v0) {0};
    \node[circ] at (v1) {1};
    \node[circ] at (v2) {2};
    \node[circ] at (v3) {3};
    \node[circ] at (v4) {4};
    \node[circ] at (v5) {5};
    \node at (0,-1.30450849719) {$S_{49}$};
    \end{tikzpicture}        
    \caption{Additional minimal forbidden subgraphs for $\D_\infty^\pm$ (up to switching equivalence).} \label{fig:minimal-forb-2}
\end{figure}
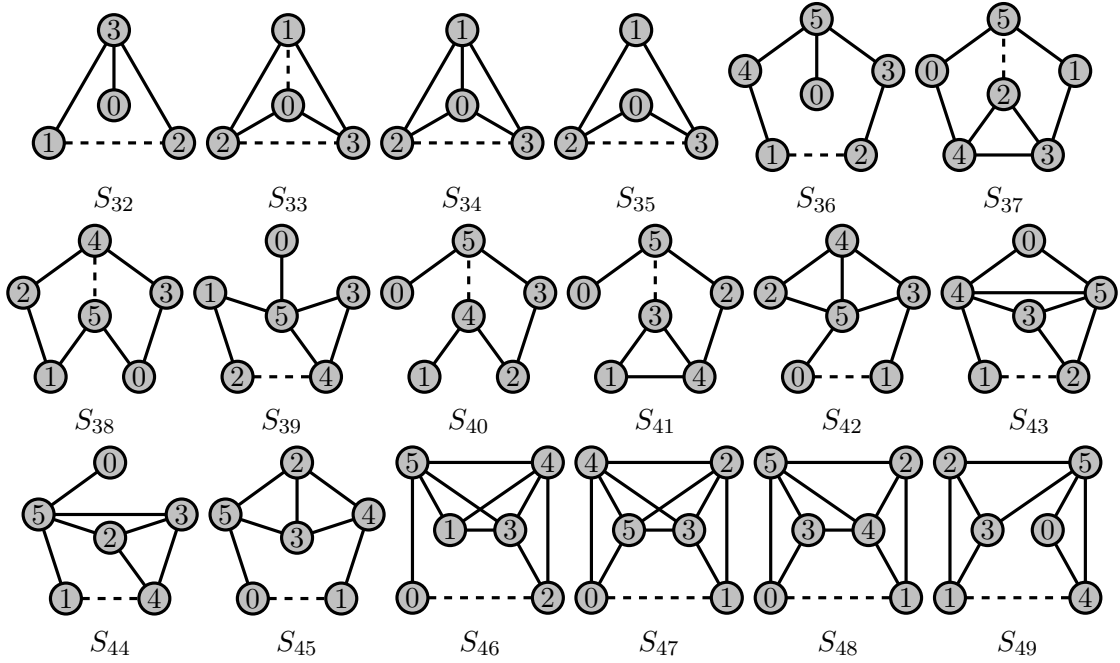

Next, generalizing \cref{lem:computation-1}, we carry out the following computation.

\begin{lemma} \label{lem:computation-2}
    For every minimal forbidden subgraph $F^\pm$ for the family $\D^\pm_\infty$, and every nonempty signed vertex subset $A^\pm$ of $F^\pm$, the path extensions and the clique extensions of $F^\pm$ satisfy
    \[
        \lim_{\ell \to \infty} \la_1(F^\pm, A^\pm, \ell) \le -\la^* \quad \text{and} \quad \lim_{m \to \infty} \la_1(F^\pm, A^\pm, K_m) < -\la^*.
    \]
    Moreover, the equality holds in the first inequality if and only if $F^\pm = G_4$ and $A^\pm \in \sset{\sset{3^\pm}, \sset{4^\pm}}$.
\end{lemma}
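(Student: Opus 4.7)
The plan is to extend the proof of \cref{lem:computation-1} verbatim to the signed setting, reducing the statement to a finite validated numerical verification of the same kind carried out in \cref{sec:numerics}. Concretely, for each of the $31$ all-positive signed graphs $G_1, \dots, G_{31}$ of \cref{fig:minimal-forb-1} and each of the $18$ genuinely signed graphs $S_{32}, \dots, S_{49}$ of \cref{fig:minimal-forb-2}, and for each nonempty signed vertex subset $A^\pm$, I would verify both eigenvalue inequalities with rigorous enclosures.

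The first step is to prune the case list using switching equivalence, which preserves the spectrum. Switching at a subset $S \subseteq V(F^\pm)$ simultaneously negates the edge signs of $F^\pm$ across $S$ and flips the sign, inside $A^\pm$, of every vertex in $A \cap S$; this is because switching can be performed only at vertices of $F^\pm$, leaving $v_0$, the rest of the added path, and the added clique untouched. Consequently it suffices to enumerate pairs $(F^\pm, A^\pm)$ up to this restricted action, cutting the nominal $\sum_{F^\pm} 2^{\abs{V(F^\pm)}}$ raw cases down to a manageable list of orbit representatives.

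The second step is to turn the limits $\ell \to \infty$ and $m \to \infty$ into finite-dimensional algebraic problems. For the path extension, substituting the decaying solution $x_{v_i} = c\rho^i$, where $\rho = \rho(\mu) := (\mu + \sqrt{\mu^2 - 4})/2$ is the smaller-modulus root of $r^2 - \mu r + 1 = 0$, into the three-term recurrence $\mu x_{v_i} = x_{v_{i-1}} + x_{v_{i+1}}$ along $v_0 \dots v_\ell$ produces a boundary relation at $v_0$ that, coupled with the eigenvalue equation on $V(F^\pm)$, yields a self-consistency equation whose smallest root in $(-\infty, -2]$ is exactly $\lim_{\ell\to\infty}\la_1(F^\pm, A^\pm, \ell)$. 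For the clique extension, the eigenvalue equation on the $K_m$ clique forces all clique coordinates to coincide, and the rescaling used in the proof of \cref{lem:path-clique-extension} delivers a parallel finite-dimensional limit. In each case the claim reduces to the algebraic inequality ``smallest root is at most $-\la^*$'', which is discharged by interval arithmetic exactly as in \cref{sec:numerics}.

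The principal obstacle is organizational rather than mathematical: the case count is large, and equality must be allowed for, since a handful of extremal representatives, namely the signed analogues of the $(G_{23}, A)$-with-$A \in \sset{\sset{1}, \sset{2}}$ equality case in \cref{lem:computation-1} that are switching equivalent to the $E_{2, n}$-like graphs of \cref{fig:e2n}, achieve exactly $-\la^*$ in the first limit. Identifying these extremal cases and certifying strict inequality for every other representative is handled uniformly by the same computer-assisted pipeline, so no new analytic ingredient is required beyond careful enumeration of the switching orbits and bookkeeping over the additional signings introduced by \cref{fig:minimal-forb-2}.
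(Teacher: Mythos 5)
Your proposal is correct in outline and shares the paper's overall strategy: reduce \cref{lem:computation-2} to a finite, computer-verified check over the $49$ minimal forbidden subgraphs of \cref{fig:minimal-forb-1,fig:minimal-forb-2} and all nonempty signed vertex subsets, with the equality cases (namely $F^\pm = G_{23}$ and $A^\pm$ a signed version of $\sset{1}$ or $\sset{2}$, whose path extensions are switching equivalent to $E_{2,n}$ and hence tend to exactly $-\la^*$ by \cref{lem:e2n}) treated separately. Where you genuinely diverge is in how the limits are certified, and your route is heavier than necessary. The paper never computes the limits at all: since $(F^\pm, A^\pm, \ell)$ is an induced subgraph of $(F^\pm, A^\pm, \ell+1)$ and $(F^\pm, A^\pm, K_m)$ of $(F^\pm, A^\pm, K_{m+1})$, Cauchy interlacing makes $\la_1$ monotone non-increasing in $\ell$ and in $m$, so exhibiting a single finite $\ell$ and a single finite $m$ with $\la_1 \le -101/50 < -\la^*$ already bounds the limit strictly below $-\la^*$; this witness is certified by checking signs of leading principal minors of $A + (101/50)I$ in exact rational arithmetic, with no square roots, no interval arithmetic, and no switching-orbit pruning (all signed subsets are simply enumerated). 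Your approach instead identifies $\lim_{\ell\to\infty}\la_1(F^\pm, A^\pm, \ell)$ with the smallest root of a self-consistency equation obtained from the decaying ansatz $x_{v_i} = c\rho(\mu)^i$, and the clique limit with a twin-vertex/rank-one reduction; this can be made to work, but it obliges you to actually prove the asserted identification of the limit with that root (convergence of the truncations to the bottom of the spectrum of the infinite extension), and to handle the algebraic threshold $\la^*$ and the attendant radicals inside validated numerics. None of that is needed, because the lemma only requires an upper bound on each limit: monotonicity plus a finite witness below any rational number exceeding $\la^*$ (such as $101/50$) closes every strict case, and \cref{lem:e2n} closes the equality cases. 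Your switching-orbit pruning is a legitimate optimization (the restricted action you describe does preserve the spectra of all three extension types), but it is an implementation convenience rather than a logical ingredient.
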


We prove \cref{lem:computation-2} under computer assistance in \cref{sec:numerics}. Lastly, the proof of \cref{lem:path-clique-extension} can be taken almost verbatim to show the following generalization.

\begin{lemma} \label{lem:path-clique-extension-signed}
    Suppose that $A^\pm$ is a nonempty vertex subset of a signed graph $F^\pm$ and $\la \ge 2$. If the path extensions of $F^\pm$ satisfy
    \[
        \lim_{\ell \to \infty} \la_1(F^\pm, A^\pm, \ell) < -\la,
    \]
    then there exists $m \in \N^+$ such that the path-clique extensions of $F^\pm$ satisfy
    \[
        \pushQED{\qed}
        \la_1(F^\pm, A^\pm, \ell, K_m) < -\la \text{ for every }\ell \in \N.\qedhere
        \popQED
    \]
\end{lemma}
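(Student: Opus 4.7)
The plan is to mirror the proof of \cref{lem:path-clique-extension} essentially verbatim, exploiting the key observation that every edge added in forming $(F^\pm, A^\pm, \ell, K_m)$ from $F^\pm$ --- namely the path $v_0 \ldots v_{\ell_0}$ in the auxiliary extension $(F^\pm, A^\pm, \ell_0)$, the clique $K_m$, and the edges from $v_\ell$ to the clique --- is positive, while the signed edges inside $F^\pm$ and between $v_0$ and $A^\pm$ enter only through the eigenvector identity and therefore never appear explicitly in any sign-sensitive computation.

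First, invoke the hypothesis to pick $\ell_0 \in \N$ with $\la_1(F^\pm, A^\pm, \ell) < -\la$ for every $\ell \ge \ell_0$, reducing attention to $\ell < \ell_0$. Set $\la_0 := -\la_1(F^\pm, A^\pm, \ell_0)$ and let $\bx$ be a corresponding eigenvector of $(F^\pm, A^\pm, \ell_0)$. Extend $\bx$ to $\tbx\colon V_\ell \cup V(K_m) \to \R$ exactly as in the unsigned proof, by $\tx_v = x_v$ for $v \in V_\ell := V(F^\pm) \cup \sset{v_0, \ldots, v_\ell}$ and $\tx_v = x_{\ell+1}/m$ for $v \in V(K_m)$. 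Since the eigenvector identity at positions $i \in \sset{\ell, \ldots, \ell_0 - 1}$ along the added path reduces to $-\la_0 x_i = x_{i-1} + x_{i+1}$ (with $x_{\ell-1}$ replaced by a signed combination over $A^\pm$ when $i = \ell = 0$, still a linear function of values in $V_\ell$), and $-\la_0 x_{\ell_0} = x_{\ell_0-1}$, the same forward induction from $x_v = 0$ on $V_\ell$ shows $x_i = 0$ for $i \in \sset{\ell+1, \ldots, \ell_0}$, contradicting $\bx \neq \bm{0}$; hence $\tbx$ is nonzero.

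Next, carry out the Rayleigh-quotient calculation. For $v \in V_\ell \setminus \sset{v_\ell}$, the signed neighborhood of $v$ in $(F^\pm, A^\pm, \ell, K_m)$ coincides with that in $(F^\pm, A^\pm, \ell_0)$; for $v = v_\ell$ the single positive edge to $v_{\ell+1}$ is replaced by $m$ positive edges to the clique. A bookkeeping identical to the unsigned proof --- in which all signed contributions collapse via $\sum_{u \sim v} \sigma_{uv} x_u = -\la_0 x_v$ in $(F^\pm, A^\pm, \ell_0)$ --- yields
\[
    \frac{\tbx^\intercal A_{(F^\pm, A^\pm, \ell, K_m)} \tbx}{\tbx^\intercal \tbx} = \frac{-\la_0 \sum_{v \in V_\ell} x_v^2 + x_\ell x_{\ell+1} + m(m-1)(x_{\ell+1}/m)^2}{\sum_{v \in V_\ell} x_v^2 + m(x_{\ell+1}/m)^2},
\]
which tends to $-\la_0 + (x_\ell + x_{\ell+1}) x_{\ell+1} / \sum_{v \in V_\ell} x_v^2$ as $m \to \infty$.

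Finally, since $\la_0 > \la \ge 2$, it remains to show $(x_\ell + x_{\ell+1}) x_{\ell+1} \le 0$. A reverse induction on $i \in \sset{\ell, \ldots, \ell_0 - 1}$ --- base case $i = \ell_0 - 1$ from $-\la_0 x_{\ell_0} = x_{\ell_0-1}$ together with $\la_0 > 2$, and inductive step from $-\la_0 x_{i+1} = x_i + x_{i+2}$ for $i \in \sset{\ell, \ldots, \ell_0 - 2}$ --- delivers $(x_i + x_{i+1}) x_{i+1} \le 0$ throughout, just as in the unsigned argument; both identities are valid because the interior of the added path consists of all-positive edges. I do not foresee any genuine obstacle: the only bookkeeping point is to check that the signed structure of $F^\pm$ and of the edges $v_0 A^\pm$ is swallowed by the eigenvector identity rather than intruding into any explicit computation, and this is immediate because the Rayleigh quotient accesses the signed part of $A_{(F^\pm, A^\pm, \ell, K_m)}$ only via the product with $\bx$.
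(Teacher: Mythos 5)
Your proposal is correct and follows exactly the route the paper takes: the paper proves this lemma by declaring that the proof of \cref{lem:path-clique-extension} carries over almost verbatim, and your write-up simply makes explicit the (correct) observation that the signs of $F^\pm$ and of the edges into $A^\pm$ enter only through the eigenvector identity, while every computation that is sensitive to explicit edge values involves only the all-positive path and clique. The nonvanishing of $\tbx$, the Rayleigh-quotient simplification, and the downward induction giving $(x_\ell + x_{\ell+1})x_{\ell+1} \le 0$ are all handled exactly as in the unsigned case, so there is no gap.
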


We are ready to prove \cref{thm:main2-2}.

\begin{proof}[Proof of \cref{thm:main2-2}]
    Suppose that $\la \in [2, \la^*)$. Let $\F^\pm$ denote the set of minimal forbidden subgraphs for the family $\D^\pm_\infty$. According to \cref{thm:cv}, the family $\F^\pm$ is finite. Combining \cref{lem:computation-2,lem:path-clique-extension-signed}, we choose $\ell, m \in \N^+$ such that for every $F^\pm \in \F^\pm$, the signed extension family $\X^\pm(F^\pm, \ell, m)$ is disjoint from $\Gs$. We know that $S_5 \not\in \Gs$ and $-K_4 \not\in \Gs$. In particular, no graph in $\Gs$ contains a subgraph that is switching equivalent to any member in the following family
    \begin{equation} \label{eqn:main2-2}
        \sset{S_5, -K_4} \cup \bigcup_{F^\pm \in \F^\pm} \X^\pm(F^\pm, \ell, m).
    \end{equation}
    Using \cref{lem:forbid-extensions-signed}, we obtain $N \in \N$ such that for every connected signed graph $G^\pm$ with more than $N$ vertices, if no member in \eqref{eqn:main2-2} is switching equivalent to a subgraph of $G^\pm$, then neither is any $F^\pm \in \F^\pm$, and so $G^\pm$ is represented by a subset of $D_n$ and is clearly in $\G^\pm(2)$. This implies that every connected graph in $\Gs \setminus \G^\pm(2)$ has at most $N$ vertices.
\end{proof}

\subsection{Proof of \texorpdfstring{\cref{thm:main2}}{the second main theorem} for \texorpdfstring{$\la \ge \la^*$}{λ ≥ λ*}}

\begin{proof}[Proof of \cref{thm:main2} for $\la \ge \la^*$]
    If $\F^\pm$ is a finite forbidden subgraph characterization of $\Gs$, then the all-positive signed graphs in $\F^\pm$ would form a finite forbidden subgraph characterization of $\Gl$, which contradicts \cref{thm:main1} for $\la \ge \la^*$.
\end{proof}

\section{Spherical two-distance sets} \label{sec:app}

We introduce the definition of chromatic number for signed graphs.

\begin{definition}[Chromatic number] \label{def:chromatic}
    A \emph{valid $p$-coloring} of a signed graph $G^\pm$ is a coloring of the vertices using $p$ colors such that the endpoints of every negative edge receive different colors, and the endpoints of every positive edge receive identical colors. (See \cref{fig:coloring} for an example.) The \emph{chromatic number} $\chi(G^\pm)$ of a signed graph $G^\pm$ is the smallest $p$ for which $G^\pm$ has a valid $p$-coloring. If $G^\pm$ does not have a valid $p$-coloring for any $p$, we write $\chi(G^\pm) = \infty$.
\end{definition}

\begin{figure}[t]
    \centering
    \begin{tikzpicture}[very thick,scale=.8]
        \coordinate (a) at (0.922649731,3);
        \coordinate (b) at (2.077350269,3);
        \coordinate (c) at (-0.577350269,1);
        \coordinate (d) at (0.577350269,1);
        \coordinate (e) at (0,0);
        \coordinate (f) at (2.422649731,1);
        \coordinate (g) at (3.577350269,1);
        \coordinate (h) at (3,0);
        \draw [fill=red!30,draw=none] (0,0.65) circle[radius=1];
        \draw [fill=blue!30,draw=none] (3,0.65) circle[radius=1];
        \draw [fill=green!30,draw=none] (1.5,3) circle[radius=1];
        \draw[dashed] (c) -- (a) -- (d);
        \draw[dashed] (g) -- (b) -- (f);
        \draw[dashed] (d) -- (h);
        \draw[dashed] (e) -- (f);
        \draw (a) node[vertex]{} -- (b) node[vertex]{};
        \draw (c) node[vertex]{} -- (e) node[vertex]{} -- (d) node[vertex]{};
        \draw (f) node[vertex]{} -- (g) node[vertex]{} -- (h) node[vertex]{} -- cycle;
    \end{tikzpicture}
    \caption{A valid $3$-coloring of a signed graph.} \label{fig:coloring}
\end{figure}
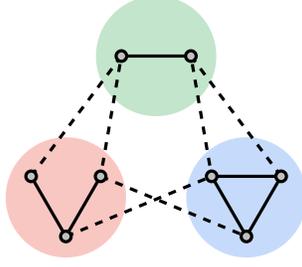

Recall from \cref{sec:intro} the following spectral graph theoretic quantity,
\begin{equation} \label{def:kpl}
    \kpl = \inf\dset{\frac{\abs{G^\pm}}{\mult(\la, G^\pm)}}{\chi(G^\pm) \le p \text{ and }\la^1(G^\pm) = \la}.
\end{equation}
We say that $k_p(\la)$ is \emph{achievable} if it is finite and the infimum can be attained. The spectral graph theoretic quantity $k_p(\la)$ can be seen as a generalization of the \emph{spectral radius order} $k(\la)$ defined by
\[
    k(\la) = \min\dset{\abs{G}}{\la^1(G) = \la}.
\]
Indeed, $k_1(\la) = k_2(\la) = k(\la)$. However, the behavior of $k_p(\la)$ is far more mysterious when $p \ge 3$ (see the remark after \cite[Definition~1.10]{JTYZZ23} for more details). The technique developed in the current paper enables us to certify values of $\kpl$ whenever $\la < \la^*$.

\begin{theorem} \label{thm:kpl}
    For every $p \ge 3$ and $\la < \la^*$, there exists $N \in \N$ such that
    \[
        \kpl = \min\dset{\frac{\abs{G^\pm}}{\mult(\la, G^\pm)}}{\abs{G^\pm} \le N, \chi(G^\pm) \le p \text{ and }\la^1(G^\pm) = \la},
    \]
    and in particular, $\kpl$ is achievable whenever it is finite. Moreover, for $\la = 2$, there exists $p_0 \ge 3$ such that $k_p(2) = p^2/(p-1)^2$ for all $p \ge p_0$.
\end{theorem}

We will return to the proof of \cref{thm:kpl} towards the end of the subsection. The next result constructs large spherical codes with two fixed angles.

\begin{proposition}[Proposition~2.2 of Jiang et al.~\cite{JTYZZ23}] \label{thm:lower-bound}
    Fix $-1 \le \be < 0 \le \al < 1$. Then $\Nabd \ge d$ for every $d \in \N^+$. Moreover if $\kpl < \infty$, where $\la =(1-\al)/(\al - \be)$ and $p = \lfloor -\al/\be \rfloor + 1$, then
    \begin{equation*}
        \pushQED{\qed}
        N_{\al,\be}(d) \ge \begin{dcases}
            \frac{\kpl d}{\kpl-1} - O_{\al,\be}(1) & \text{if }\kpl \text{ is achievable}, \\
            \frac{\kpl d}{\kpl-1} - o(d) & \text{otherwise}.
        \end{dcases} \qedhere
        \popQED
    \end{equation*}
\end{proposition}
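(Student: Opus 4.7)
For the first inequality $N_{\al,\be}(d)\ge d$, I would exhibit $d$ unit vectors in $\R^d$ whose Gram matrix is $(1-\al)I_d+\al J_d$. Since $0\le\al<1$, this matrix is positive definite with eigenvalues $1-\al$ (multiplicity $d-1$) and $1+(d-1)\al$, hence of rank $d$, and the corresponding vectors form a valid spherical two-distance set since all pairwise inner products equal $\al\in\{\al,\be\}$.

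For the \emph{moreover} part, assume first that $\kpl$ is achieved by some signed graph $G^\pm$ on $n$ vertices with $\chi(G^\pm)\le p$, $\la^1(G^\pm)=\la$, $\mult(\la,G^\pm)=m$, and $n/m=\kpl$; fix a valid $p$-coloring $c\colon V(G^\pm)\to[p]$. Set $\eta:=\be/\al$; the defining inequality $(p-1)|\be|\le\al$ forced by $p=\lfloor-\al/\be\rfloor+1$ gives $\eta\ge -1/(p-1)$, so there exist unit vectors $f_1,\dots,f_p\in\R^p$ with $\langle f_i,f_j\rangle=\eta$ for $i\ne j$. Since $\la I_n-A_{G^\pm}$ is positive semidefinite of rank $n-m$, choose vectors $h_v\in\R^{n-m}$ with $\langle h_v,h_w\rangle=(\la I_n-A_{G^\pm})_{vw}$ for all $v,w$. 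For each $k\in\N^+$ pick orthonormal $g_1,\dots,g_k\in\R^k$ and for $v\in V(G^\pm)$, $j\in[k]$ define
\[
    u_{v,j}\;:=\;\sqrt{\al}\,f_{c(v)}\;\oplus\;\sqrt{\al-\be}\,(g_j\otimes h_v)\;\in\;\R^p\oplus\R^{k(n-m)}.
\]
The identity $\al+(\al-\be)\la=1$, a rearrangement of $\la=(1-\al)/(\al-\be)$, makes each $u_{v,j}$ a unit vector. A case analysis on whether $j=j'$ (which controls $\langle g_j,g_{j'}\rangle\in\{0,1\}$) and on the color/edge status of the pair $v,w$ in $G^\pm$ verifies that $\langle u_{v,j},u_{w,j'}\rangle\in\{\al,\be\}$ whenever $(v,j)\ne(w,j')$: cross-copy pairs give $\al$ within a color class and $\be$ across, purely from the color term; same-copy pairs toggle via $\al-(\al-\be)=\be$ (a positive edge within a color class flips $\al$ to $\be$) and $\al\eta+(\al-\be)=\al$ (a negative edge across color classes flips $\be$ to $\al$), and non-edges give $\al$ or $\be$ by color.

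This produces $kn$ unit vectors in ambient dimension $p+k(n-m)$, so taking $k=\lfloor(d-p)/(n-m)\rfloor$ yields
\[
    N_{\al,\be}(d)\;\ge\;kn\;\ge\;\frac{nd}{n-m}-O_{\al,\be}(1)\;=\;\frac{\kpl\,d}{\kpl-1}-O_{\al,\be}(1),
\]
the first branch of the claim. When $\kpl$ is the infimum but is not attained, apply the same construction to a sequence $G^\pm_i$ with $|G^\pm_i|/\mult(\la,G^\pm_i)\to\kpl$; a diagonal choice $i=i(d)$ with $|G^\pm_{i(d)}|=o(d)$ as $d\to\infty$ converts the additive $O_{\al,\be,G^\pm_{i(d)}}(1)$ error into $o(d)$, yielding the second branch. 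The main obstacle is the algebraic bookkeeping in the four-way case analysis of inner products; once the three identities $\al+(\al-\be)\la=1$, $\al-(\al-\be)=\be$, and $\al\eta+(\al-\be)=\al$ are in hand, the construction delivers all four admissible values of $\langle u_{v,j},u_{w,j'}\rangle$, and the non-achievable case reduces to a routine diagonalization.
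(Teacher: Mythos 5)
Your proposal is correct. Note, however, that the paper does not prove this statement at all: it is imported verbatim (with a \qed) as Proposition~1.6 of Jiang--Tidor--Yao--Zhang--Zhao \cite{JTYZZ20}, and your argument essentially reconstructs the construction given there --- the rank-$d$ Gram matrix $(1-\al)I+\al J$ for $\Nabd\ge d$, and, for the main bound, Gram vectors of the positive semidefinite matrix $\la I-A_{G^\pm}$ tensored with $k$ orthonormal directions, glued to a scaled $p$-point simplex indexed by the valid coloring, with the identities $\al+(\al-\be)\la=1$, $\al-(\al-\be)=\be$, $\al\eta+(\al-\be)=\al$ doing exactly the bookkeeping you describe, followed by the same diagonalization when the infimum is not attained. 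The only point to tidy is the degenerate case $\al=0$ (where $p=1$): there $\eta=\be/\al$ is undefined, but the color block is scaled by $\sqrt{\al}=0$ and a valid $1$-coloring rules out negative edges, so the construction goes through trivially; it would be worth saying this explicitly rather than leaving $\eta$ undefined.
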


\cref{conj:main} asserts that the constructions in \cref{thm:lower-bound} are optimal up to an error sublinear in $d$. A framework to bound $\Nabd$ from above was formulated in \cite{JTYZZ23}.

\begin{definition}[Definition~5.2 of Jiang et al.~\cite{JTYZZ23}] \label{def:mph}
    Given $p \in \N^+$ and a family $\HH$ of signed graphs, let $M_{p, \HH}(\la, N)$ be the maximum possible value of $\mult(\la, G^\pm)$ over all signed graphs $G^\pm$ on at most $N$ vertices that do not contain any member of $\HH$ as a subgraph and satisfy $\chi(G^\pm) \le p$ and $\la^{p+1}(G^\pm) \le \la$.
\end{definition}

\begin{theorem}[Theorem~5.3 of Jiang et al.~\cite{JTYZZ23}] \label{thm:upper-bound}
    Fix $-1 \le \be < 0 \le \al < 1$. Set $\la = (1-\al)/(\al - \be)$ and $p = \lfloor -\al/\be \rfloor + 1$. Let $\HH$ be a finite family of signed graphs with $\la^1(H^\pm) > \la$ for each $H^\pm \in \HH$. Then
    \begin{equation*}
        \pushQED{\qed}
        N_{\al,\be}(d) \le d + M_{p, \HH}(\la, \Nabd) + O_{\al, \be, 
        \HH}(1).  \qedhere
        \popQED
    \end{equation*}
\end{theorem}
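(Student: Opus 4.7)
Before seeing a proof, here is how I would attempt to prove \cref{thm:upper-bound}. The plan is to associate to any spherical two-distance set of the claimed size a signed graph $G^\pm$ whose spectrum and combinatorial structure realize the defining conditions of $M_{p,\HH}(\la,N)$, and to extract the upper bound from the rank constraint on the Gram matrix. Given $N = N_{\al,\be}(d)$ unit vectors $v_1,\dots,v_N \in \R^d$ with pairwise inner products in $\sset{\al,\be}$, I would form $G^\pm$ on $\sset{1,\dots,N}$ by signing the edge $ij$ so that the Gram matrix $\Gamma$ of the $v_i$'s becomes an affine combination of $I$, $J$, and the signed adjacency matrix $A$ of $G^\pm$ in which $\la = (1-\al)/(\al-\be)$ appears as the pinned eigenvalue. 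Positive semidefiniteness of $\Gamma$ then forces every eigenvalue of $A$ orthogonal to $\bm{1}$ to be at most $\la$, while the rank condition $\rank \Gamma \le d$ forces $\mult(\la,A) \ge N - d - 1$.

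Next I would verify the chromatic bound $\chi(G^\pm) \le p = \lfloor -\al/\be\rfloor + 1$ and the spectral bound $\la^{p+1}(G^\pm) \le \la$. The classical Gram-matrix argument applied to a positive clique $K$ augmented by an outside vector yields $\abs{K} \le p$; after deleting $O_{\al,\be}(1)$ outliers one can arrange the positive edges to form a disjoint union of at most $p$ cliques, which provides a valid $p$-coloring. The $p$ cluster-average directions then absorb the $p$ largest eigenvalues of $A$, so by Cauchy interlacing every subsequent eigenvalue is at most $\la$.

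Since each $H^\pm \in \HH$ satisfies $\la^1(H^\pm) > \la$, a greedy removal of one vertex per copy destroys each forbidden subgraph. Because $\HH$ is finite and its members are of bounded size, only $O_{\al,\be,\HH}(1)$ vertices are removed in total, and $\mult(\la,G^\pm)$ drops by at most the same $O(1)$ via Cauchy interlacing. The cleaned $G^\pm$ is then admissible in the maximization defining $M_{p,\HH}(\la,N)$, giving
\[
    M_{p,\HH}(\la,N) \ge \mult(\la,G^\pm) \ge N - d - O_{\al,\be}(1),
\]
which rearranges to the claimed upper bound.

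The main obstacle I anticipate is the clustering and $p$-coloring step: partitioning the vertices into at most $p$ positive cliques after only $O_{\al,\be}(1)$ removals, and simultaneously securing $\la^{p+1}(G^\pm) \le \la$, is the combinatorial heart of the JTYZZ framework and calls for a delicate analysis of near-extremal two-distance configurations.
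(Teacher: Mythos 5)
A preliminary point about the comparison you asked for: this paper does not prove \cref{thm:upper-bound} at all. It is quoted from \cite{JTYZZ20} (it is one of the statements carrying a closing $\square$, the paper's convention for results cited without proof) and is used only as a black box in the proof of \cref{cor:spherical}. So your proposal can only be judged against the original argument in \cite{JTYZZ20}, which runs through the full machinery of that paper: a Ramsey/partition step producing the $p$ classes, a projection that removes a bounded number of dominant directions, and only then the association of a signed graph and a bounded-deletion step.

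Judged on its own terms, your outline has the right silhouette (associate a signed graph, get multiplicity at least $N-d-O(1)$ from positive semidefiniteness plus the rank bound, certify admissibility in $M_{p,\HH}$), but two of its three pillars fail as stated. First, the linear algebra. If you sign the complete graph on the $N$ vectors according to $\al$ versus $\be$, then $\Gamma=(1-\tfrac{\al+\be}{2})I+\tfrac{\al+\be}{2}J\pm\tfrac{\al-\be}{2}A$, so on $\ker\Gamma\cap\bm{1}^\perp$ the signed adjacency matrix is pinned at $\pm(2-\al-\be)/(\al-\be)$, whose absolute value strictly exceeds $\la=(1-\al)/(\al-\be)$, and positive semidefiniteness caps the spectrum on $\bm{1}^\perp$ only at that larger value; you obtain neither $\mult(\la,A)\ge N-d-1$ nor eigenvalues at most $\la$. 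The value $\la$ is pinned only if the $\al$-pairs are treated as non-edges, i.e.\ $\Gamma=(1-\al)I+\al J-(\al-\be)A_\be$; but then the graph is effectively unsigned, there are no positive cliques, and your coloring step and the role of $p$ evaporate. The correct association in \cite{JTYZZ20} is made only after the clustering/projection step you defer. Second, the $\HH$-cleaning step. ``Greedy removal of one vertex per copy \dots only $O_{\al,\be,\HH}(1)$ vertices are removed in total'' does not follow from the finiteness of $\HH$, and the implicit eigenvalue count does not rescue it: vertex-disjoint copies of some $H^\pm$ with $\la^1(H^\pm)>\la$ that have edges between them need not force many eigenvalues above $\la$. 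Already for unsigned graphs, $K_{n,n}$ has exactly one eigenvalue above $1/2$ yet requires $n$ vertex deletions to destroy all copies of $K_2$, so bounded deletion genuinely needs the additional structural control (the $p$-clustering, bounded cross-degrees after projection, and the resulting bound $\la^{p+1}\le\la$) that you yourself flag as ``the main obstacle'' and leave open. As it stands, the proposal is a rough outline of the JTYZZ framework rather than a proof.
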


A proper choice of the finite family $\HH$ in the last theorem establishes \cref{conj:main} for $\la < \la^*$.

\begin{corollary} \label{cor:spherical}
    Fix $-1 \le \be < 0 \le \al < 1$. Set $\la = (1-\al)/(\al-\be)$ and $p = \lfloor -\al / \be \rfloor + 1$. If $\la < \la^*$, then
    \[
        \Nabd = \begin{dcases}
            \frac{\kpl d}{\kpl-1} + O_{\al,\be}(1) & \text{if }\kpl < \infty, \\
            d + O_{\al,\be}(1) & \text{otherwise}.
        \end{dcases}
    \]
\end{corollary}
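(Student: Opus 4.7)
}

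The plan is to prove the upper bound via \cref{thm:upper-bound} after locating a suitable finite family $\HH$, and then match it to the lower bound from \cref{thm:lower-bound}. Since $\la<\la^*$, \cref{cor:main2} applies and the family $\Gsp$ of signed graphs with largest eigenvalue at most $\la$ admits a \emph{finite} forbidden subgraph characterization; I take $\HH$ to be exactly its (finite) set of minimal forbidden subgraphs. By minimality every $H^\pm\in\HH$ satisfies $\la^1(H^\pm)>\la$, so $\HH$ is a legal input to \cref{thm:upper-bound}. Moreover any signed graph $G^\pm$ on at most $N$ vertices that avoids all members of $\HH$ as subgraphs lies in $\Gsp$, and thus satisfies $\la^1(G^\pm)\le\la$ (in particular $\la^{p+1}(G^\pm)\le\la$), which is the structural handle I need on $M_{p,\HH}(\la,N)$.

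The core step is therefore to show that
\[
    M_{p,\HH}(\la,N) \le \frac{N}{\kpl} + O_{\al,\be}(1)
    \qquad\text{(with the convention $1/\infty=0$).}
\]
This is the natural certificate that $\kpl$ controls the multiplicity side. I would argue as follows: for any $G^\pm$ feasible for $M_{p,\HH}(\la,N)$, decompose it (up to switching) into its connected components $G^\pm_1,\dots,G^\pm_r$. Each $G^\pm_i$ still has $\chi(G^\pm_i)\le p$ and $\la^1(G^\pm_i)\le\la$. Split the components into those that \emph{attain} $\la$ as an eigenvalue and those that do not; the latter contribute nothing to $\mult(\la,G^\pm)$. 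For the former, the definition \eqref{def:kpl} of $\kpl$ gives $|G^\pm_i|/\mult(\la,G^\pm_i)\ge\kpl$ whenever $\kpl$ is finite, and $\mult(\la,G^\pm_i)=0$ if we are in the infinite case (since then no connected signed graph with $\chi\le p$ has $\la^1=\la$). Summing over components yields the displayed bound, with the $O_{\al,\be}(1)$ absorbing components whose ratio $|G^\pm_i|/\mult(\la,G^\pm_i)$ is close to but not exactly $\kpl$; here \cref{thm:kpl} is crucial, because it guarantees that $\kpl$ (when finite) is actually achieved on a signed graph of bounded size, so the infimum in \eqref{def:kpl} behaves as a minimum and the error is truly bounded in $d$ rather than merely $o(d)$.

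Feeding this multiplicity bound into \cref{thm:upper-bound} with $N=\Nabd$ gives
\[
    \Nabd \le d + \frac{\Nabd}{\kpl} + O_{\al,\be}(1),
\]
which rearranges to $\Nabd\le \kpl d/(\kpl-1)+O_{\al,\be}(1)$ when $\kpl<\infty$ and to $\Nabd\le d+O_{\al,\be}(1)$ when $\kpl=\infty$. The matching lower bounds are exactly the content of \cref{thm:lower-bound}: when $\kpl<\infty$ it is achievable by \cref{thm:kpl}, so the stronger $O_{\al,\be}(1)$ form (rather than $o(d)$) applies, which is what lets the two inequalities close with the same error order.

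The main obstacle I expect is the bookkeeping in the multiplicity bound: one must handle signed graphs $G^\pm$ whose eigenvalue $\la$ has large multiplicity but which are not themselves ``extremal'' for $\kpl$, showing that any excess over the ratio $1/\kpl$ is absorbed in a component of bounded size. This is where \cref{thm:kpl} does the real work, since without a finite minimizer for $\kpl$ one could only deduce $M_{p,\HH}(\la,N)\le N/\kpl+o(N)$, giving $o(d)$ error in $\Nabd$ and thereby only recovering \cref{conj:main} as stated, not the strengthened $O_{\al,\be}(1)$ version proved here.
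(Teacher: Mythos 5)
Your proposal follows essentially the same route as the paper: take $\HH$ to be the finite forbidden subgraph characterization furnished by \cref{cor:main2}, bound $M_{p,\HH}(\la,\Nabd)$ by $\Nabd/\kpl$ using the definition of $\kpl$, feed this into \cref{thm:upper-bound}, and match the lower bound of \cref{thm:lower-bound}, invoking \cref{thm:kpl} to get the $O_{\al,\be}(1)$ rather than $o(d)$ form. One small correction to your commentary: the multiplicity bound $M_{p,\HH}(\la,N)\le N/\kpl$ holds exactly, with no $O_{\al,\be}(1)$ slack, no component decomposition, and no appeal to achievability --- any feasible $G^\pm$ with $\mult(\la,G^\pm)>0$ has $\la^1(G^\pm)=\la$ and is therefore itself admissible in the infimum defining $\kpl$, so achievability (\cref{thm:kpl}) is needed only on the lower-bound side, as you correctly note at the end.
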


\begin{proof}
    According to \cref{cor:main2}, we can take a finite forbidden subgraph characterization, denoted $\HH$, of the family $\Gsp$ of signed graphs with largest eigenvalue at most $\la$. Thus $M_{p, \HH}(\la, N)$ is the maximum possible value of $\mult(\la, G^\pm)$ over all signed graphs $G^\pm$ on at most $N$ vertices that satisfy $\chi(G^\pm) \le p$ and $\la^1(G^\pm) \le \la$. Note that $M_{p, \HH}(\la, N)$ is a nondecreasing function of $N$. We break the rest of the proof into two cases.

    \bigskip
    \noindent\textit{Case 1:} $M_{p, \HH}(\la, N) = 0$ for every $N \in \N^+$. In other words, there is no signed graph $G^\pm$ that satisfy $\chi(G^\pm) \le p$ and $\la^1(G^\pm) = \la$. Thus this case corresponds to the case where $\kpl = \infty$. According to \cref{thm:lower-bound,thm:upper-bound}, we have $\Nabd = d + O_{\al,\be}(1)$.

    \bigskip
    \noindent\textit{Case 2:} $M_{p, \HH}(\la, N) > 0$ for every $N \ge d_0$. This case corresponds to the case where $\kpl < \infty$. Suppose that $d \ge d_0$. \cref{thm:lower-bound} says that $\Nabd \ge d$, and so $M_{p, \HH}(\la, \Nabd) > 0$. Let $G^\pm$ be a signed graph with at most $N_{\al,\be}(d)$ vertices that satisfy $\chi(G^\pm) \le p$, $\la^1(G^\pm) \le \la$ and $\mult(\la, G^\pm) = M_{p, \HH}(\la, \Nabd)$. Thus
    \[
        \kpl \le \frac{\abs{G^\pm}}{\mult(\la, G^\pm)} \le \frac{\Nabd}{M_{p, \HH}(\la, \Nabd)}.
    \]
    \cref{thm:upper-bound} then implies that
    \[
        \Nabd \le \frac{\kpl d}{\kpl-1} + O_{\al,\be}(1),
    \]
    which, in view of \cref{thm:kpl,thm:lower-bound}, gives $\Nabd = \kpl d / (\kpl - 1) + O_{\al,\be}(1)$.
\end{proof}

Lastly, we come back to the achievability of the spectral graph theoretic quantity $\kpl$. Actually we may assume in the definition \eqref{def:kpl} of $\kpl$ that $G^\pm$ is connected by passing to a suitable connected component of $G^\pm$ in case $G^\pm$ is not connected. Furthermore, as we shall mainly work with the signed graph that reverses all the edge signs of $G^\pm$ in \eqref{def:kpl}, we redefine $\kpl$ as follows:
\[
    \kpl = \inf\dset{\frac{\abs{G^\pm}}{\mult(-\la, G^\pm)}}{G^\pm \text{ is connected}, \chi(-G^\pm) \le p, \text{ and }\la_1(G^\pm) = -\la},
\]
where $-G^\pm$ reverses all the edge signs of $G^\pm$.

We break the proof of \cref{thm:kpl} into three cases $\la < 2$, $\la \in (2, \la^*)$ and $\la = 2$.

\begin{proof}[Proof of \cref{thm:kpl} for $\la < 2$]
    We obtain $\ell \in \N^+$ from \cref{lem:hoffman}(\ref{lem:p}) such that $P_{\ell+1} \not\in \Gs$. Applying \cref{lem:forbid-extensions-signed} to $F^\pm = K_1$ (the $1$-vertex graph), $k_1 = 4$, $k_2 = 3$ and $m = 2p$, we obtain $N \in \N$ such that for every connected signed graph $G^\pm$, if $G^\pm$ does not contain any subgraph that is switching equivalent to any member in $\sset{S_4, -K_3} \cup \X^\pm(K_1, \ell, 2p)$, then $G^\pm$ contains at most $N$ vertices.
    
    It suffices to show that for every connected signed graph $G^\pm$ with more than $N$ vertices, either $\chi(-G^\pm) > p$ or $G^\pm \not\in \Gs$. By our choice of $N$, the signed graph $G^\pm$ contains a subgraph that is switching equivalent to a member in $\sset{S_4, -K_3} \cup \X^\pm(K_1, \ell, 2p)$. We break the rest of the proof into two cases.
    
    \bigskip
    \noindent\textit{Case 1:} $G^\pm$ contains a subgraph that is switching equivalent to $S_4$, $-K_3$, or a path extension in $\X^\pm(K_1, \ell, 2p)$. Since $\la_1(S_4) = \la_1(-K_3) = -2$ and every path extension in $\X^\pm(K_1, \ell, 2p)$ is switching equivalent to $P_{\ell+1}$, the signed graph $G^\pm$ is not in $\Gs$.

    \bigskip
    \noindent\textit{Case 2:} $G^\pm$ contains a subgraph that is switching equivalent to a path-clique extension or a clique extension in $\X^\pm(K_1, \ell, 2p)$. Observe that every path-clique extension and every clique extension in $\X^\pm(K_1, \ell, 2p)$ contains a subgraph that is switching equivalent to $K_{2p+1}$, and moreover every signed graph that is switching equivalent to $K_{2p+1}$ always contains $K_{p+1}$ as a subgraph. Thus the signed graph $G^\pm$ contains $K_{p+1}$ as a subgraph. Therefore $-G^\pm$ contains $-K_{p+1}$ as a subgraph, and so $\chi(-G^\pm) \ge p+1$.
\end{proof}

The proof of \cref{thm:kpl} for $\la \in (2, \la^*)$ follows immediately from \cref{thm:main2-2}.

\begin{proof}[Proof of \cref{thm:kpl} for $\la \in (2, \la^*)$]
    \cref{thm:main2-2} implies that there exists $N\in\N$ such that every connected signed graph $G^\pm$ with $\la_1(G^\pm) = -\la$ has at most $N$ vertices.
\end{proof}

The proof of \cref{thm:kpl} for $\la = 2$ is trickier. We need the following generalization of \cref{thm:cgss}. This generalization was explicitly recognized by Chawathe and Vijayakumar~\cite[Theorem~1.1]{CV90}, who attributed the result to Witt~\cite{W41}, and referred the reader to \cite{CGSS76} for a combinatorial proof.

\begin{theorem}[Witt~\cite{W41} and Cameron et al.~\cite{CGSS76}] \label{thm:cgss-signed}
    For every connected signed graph $G^\pm$, the smallest eigenvalue of $G^\pm$ is at least $-2$ if and only if $G^\pm$ is represented by a subset of $D_n$ or $E_8$.
\end{theorem}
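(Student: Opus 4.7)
My plan is to imitate the proof of \cref{thm:cgss} by reducing the question to the classification of irreducible simply-laced root systems, and then invoke the standard embeddings $A_n \hookrightarrow D_{n+1}$ and $E_6, E_7 \hookrightarrow E_8$. The signing of the edges only enters through the signs of inner products, so the only modification relative to the unsigned case is bookkeeping.

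First I would dispatch the easy direction: if $G^\pm$ is represented by $V \subseteq D_n$ or $V \subseteq E_8$, then $A_{G^\pm} + 2I$ is the Gram matrix of $V$, hence positive semidefinite, and thus $\la_1(G^\pm) \ge -2$. For the hard direction, assume $\la_1(G^\pm) \ge -2$, so that $M := A_{G^\pm} + 2I$ is positive semidefinite of some rank $r$. Factor $M = B^\intercal B$ with $B$ an $r \times n$ real matrix, and let $v_1, \dots, v_n \in \R^r$ be its columns. Then $\langle v_i, v_i \rangle = 2$ and $\langle v_i, v_j \rangle = (A_{G^\pm})_{ij} \in \sset{-1, 0, +1}$ for $i \neq j$, so each $v_i$ is a vector of squared-length $2$ and pairwise inner products are integers.

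Next I would argue that the set $R$ obtained by closing $\sset{\pm v_1, \dots, \pm v_n}$ under the reflections $s_{v_i}(x) = x - \langle x, v_i \rangle v_i$ (which preserve squared-length $2$ and integrality of pairwise inner products, thanks to the values lying in $\sset{0, \pm 1, \pm 2}$) is a finite root system, simply-laced because all roots have the same length. Since $G^\pm$ is connected, so is $R$ as a root system: any partition of $R$ into mutually orthogonal subsets would force a nontrivial partition of $V := \sset{v_1, \dots, v_n}$ corresponding to two vertex sets of $G^\pm$ with no edges between them (inner products of $v_i, v_j$ across the partition are zero), which is impossible. Hence $R$ is an irreducible simply-laced root system.

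By the classification of irreducible simply-laced root systems, $R$ is one of $A_k, D_k, E_6, E_7, E_8$. Using the standard chain of inclusions $A_k \hookrightarrow D_{k+1}$ and $E_6, E_7 \hookrightarrow E_8$, in every case $V$ embeds into either some $D_k$ or $E_8$, giving the desired representation of $G^\pm$. The main obstacle is establishing the classification of irreducible simply-laced root systems without heavy Lie-theoretic machinery; this is exactly the work performed in the combinatorial proof of \cite[Theorem~3.5]{CGSS76}, which adapts verbatim to the present signed setting because the sign of an edge only affects the sign of the corresponding inner product and does not interact with the length or integrality constraints. Since \cref{thm:cgss} itself was derived this way from the same root-system dichotomy, the proof of the signed statement is, after the representation step above, identical.
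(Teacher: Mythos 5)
The paper does not actually prove this statement --- it quotes it (following Chawathe--Vijayakumar's attribution) from Witt and Cameron--Goethals--Seidel--Shult --- and your argument is exactly the standard proof those references give: factor the positive semidefinite matrix $A_{G^\pm}+2I$ to get norm-$\sqrt{2}$ vectors with integer inner products, close them under reflections to get an irreducible simply-laced root system (irreducible because $G^\pm$ is connected), and invoke the classification together with the embeddings $A_k\hookrightarrow D_{k+1}$ and $E_6,E_7\hookrightarrow E_8$; the signs of the edges indeed only change signs of inner products, so nothing in the unsigned argument breaks. Your outline is sound; the two places to tighten are that finiteness of the reflection closure does not follow merely from preservation of length and integrality --- you need that the closure lies among the norm-$2$ vectors of the discrete integral lattice generated by $v_1,\dots,v_n$ --- and that closure under reflections in \emph{all} elements of $R$ (not just the $v_i$) should be justified via $s_{wv_i}=w\,s_{v_i}\,w^{-1}$ for $w$ in the group generated by the $s_{v_i}$.
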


Recall that $\D_\infty^\pm$ denotes the family of signed graphs that are represented by a subset of $D_n$ for some $n \in \N^+$, where
\[
    D_n := \dset{\eps_1 \bm{e}_i + \eps_2 \bm{e}_j}{\eps_1, \eps_2 = \pm 1, 1 \le i < j \le n}.
\]
We shall focus on the signed graphs in $\D_\infty^\pm$. To that end, we generalize bidirected graphs to multigraphs as follows.

\begin{definition}[Bidirected multigraph]
    A \emph{bidirected multigraph} is a multigraph (allowing parallel edges, but no loops) in which each vertex-edge incidence $(v, e)$ has a positive or negative sign, denoted $\sigma(v, e) \in \sset{\pm 1}$, such that every pair of parallel edges $e_1$ and $e_2$ satisfies the following condition: for one of the shared endpoints, say $v_1$, the incidences $(v_1, e_1)$ and $(v_1, e_2)$ have the same sign, while for the other shared endpoint $v_2$, the incidences $(v_2, e_1)$ and $(v_2, e_2)$ have opposite signs. In particular, there are at most two edges between any two vertices in a bidirected multigraph.
\end{definition}

As we transfer relevant properties of a signed graph $G^\pm \in \D_\infty^\pm$ to a bidirected multigraph $\vec{H}$, a vertex coloring of $G^\pm$ will correspond to an edge coloring of $\vec{H}$.

\begin{definition}[Intersecting edges and proper edge colorings] \label{def:proper-edge-coloring}
    As opposed to parallel edges, we say that two edges of a bidirected multigraph \emph{intersect} at a vertex $v$ if $v$ is the only vertex they have in common. A \emph{proper $p$-edge-coloring} of a bidirected multigraph $\vec{H}$ is a coloring of the edges using $p$ colors such that for each pair of edges $e_1, e_2$ that intersect at one vertex, say $v$, the edges $e_1$ and $e_2$ receive different colors when the incidences $(v, e_1)$ and $(v, e_2)$ have the same sign, whereas $e_1$ and $e_2$ receive identical colors when $(v, e_1)$ and $(v, e_2)$ have opposite signs. See \cref{fig:bidirected-multigraph} for an example.
\end{definition}

\begin{definition}[Associated vectors and uniform vertices]
    Suppose that $v_1, \dots, v_n$ are the vertices of a bidirected multigraph $\vec{H}$. For every edge $e \in E(\vec{H})$ with endpoints $v_i$ and $v_j$, define the \emph{associated vector} of the edge $e$ by
    \[
        \vec{e} := \sigma(v_i, e)\bm{e}_i + \sigma(v_j, e)\bm{e}_j \in D_n.
    \]
    We say that a vertex $v$ of $\vec{H}$ is \emph{uniform} if the sign $\sigma(v, e)$ is the same for every edge $e$ incident to $v$.
\end{definition}

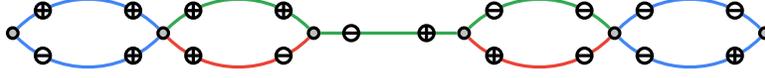
\begin{figure}[t]
    \centering
    \begin{tikzpicture}[
        very thick, baseline=(v.base),
        neg/.pic = {\draw[fill=white] (0,0) circle (0.1); \draw (-0.1,0)--(0.1,0);},
        pos/.pic = {\draw[fill=white] (0,0) circle (0.1); \draw (-0.1,0)--(0.1,0); \draw (0,0.1)--(0,-0.1);},
    ]
        \node[vertex] (a) at (-2,0) {};
        \node[vertex] (b) at (0,0) {};
        \node[vertex] (c) at (2,0) {};
        \node[vertex] (d) at (4,0) {};
        \node[vertex] (e) at (6,0) {};
        \node[vertex] (f) at (8,0) {};
        \path[color=blue] (a) edge[bend left=45] (b);
        \path[color=blue] (a) edge[bend left=-45] (b);
        \path[color=green] (b) edge[bend left=45] (c);
        \path[color=red] (b) edge[bend left=-45] (c);
        \draw[color=green] (c) -- (d);
        \path[color=green] (d) edge[bend left=45] (e);
        \path[color=red] (d) edge[bend left=-45] (e);
        \path[color=blue] (e) edge[bend left=45] (f);
        \path[color=blue] (e) edge[bend left=-45] (f);
        \draw (-1.6,0.3) pic {pos};
        \draw (-1.6,-0.3) pic {neg};
        \draw (-0.4,0.3) pic {pos};
        \draw (0.4,0.3) pic {pos};
        \draw (-0.4,-0.3) pic {pos};
        \draw (0.4,-0.3) pic {pos};
        \draw (1.6,0.3) pic {pos};
        \draw (1.6,-0.3) pic {neg};
        \draw (2.5,0) pic {neg};
        \draw (3.5,0) pic {pos};
        \draw (4.4,0.3) pic {neg};
        \draw (4.4,-0.3) pic {pos};
        \draw (5.6,0.3) pic {neg};
        \draw (5.6,-0.3) pic {neg};
        \draw (6.4,0.3) pic {neg};
        \draw (6.4,-0.3) pic {neg};
        \draw (7.6,0.3) pic {neg};
        \draw (7.6,-0.3) pic {pos};
    \end{tikzpicture}
    \caption{A proper $3$-edge-coloring of a bidirected multigraph.} \label{fig:bidirected-multigraph}
\end{figure}

\begin{proposition} \label{lem:transfer}
    There is a one-to-one correspondence between signed graphs $G^\pm$ in $\D_\infty^\pm$ and bidirected multigraphs $\vec{H}$ without isolated vertices such that the following properties hold.
    \begin{enumerate}[label=(\arabic*)]
        \item The underlying graph of $G^\pm$ is the line graph of $\vec{H}$, in particular, the number of vertices of $G^\pm$ is equal to the number of edges of $\vec{H}$.
        \item The signed graph $G^\pm$ is connected if and only if $\vec{H}$ is connected except when $\vec{H}$ consists of a single pair of parallel edges.
        \item The signed graph $-G^\pm$ has a valid $p$-coloring if and only if $\vec{H}$ has a proper $p$-edge-coloring.
        \item The multiplicity $\mult(-2, G^\pm)$ is equal to $m - \rank V$, where $m$ is the number of edges of $\vec{H}$ and $V = \{\vec{e} \in D_n \colon e \in E(\vec{H})\}$.
        \item The signed graph $G^\pm$ is all-positive if and only if every vertex of $\vec{H}$ is uniform.
    \end{enumerate}    
\end{proposition}

\begin{proof}
    Suppose that $G^\pm$ is represented by $V \subseteq D_n$. Construct a bidirected multigraph $\vec{H}$ with vertices $v_1, \dots, v_n$ and incidence signing $\sigma$ as follows. For each vector in $V$ of the form $\eps_1 \bm{e}_i + \eps_2 \bm{e}_j$ with $i < j$, we put an edge $e$ connecting $v_i$ and $v_j$, and we assign $\sigma(v_i, e) = \eps_1$ and $\sigma(v_j, e) = \eps_2$. For every pair of parallel edges $e_1$ and $e_2$, which connect $v_i$ and $v_j$, since $\sigma(v_i, e_1)\bm{e}_i + \sigma(v_j, e_1)\bm{e}_j$ and $\sigma(v_i, e_2)\bm{e}_i + \sigma(v_j, e_2)\bm{e}_j$ are two distinct vectors from $V$, their inner product satisfies
    \[
        \sset{0, \pm2} \ni \sigma(v_i, e_1)\sigma(v_i, e_2) + \sigma(v_j, e_1)\sigma(v_j, e_2) = A_{G^\pm}(e_1,e_2) \in \sset{0,\pm 1},
    \]
    and so their inner product must be $0$. Therefore $\vec{H}$ is a bona fide bidirected multigraph. We may remove all the isolated vertices (if any) from $\vec{H}$.
    
    Conversely one can read off the vectors in $V$ from $\vec{H}$ --- for every edge $e$, include the associated vector $\vec{e} \in D_n$ in $V$. Furthermore one can reconstruct $G^\pm$ from $\vec{H}$ as follows. We identify the vertex set of $G^\pm$ with the edge set of $\vec{H}$, and for each pair of distinct vertices $e_1$ and $e_2$ of $G^\pm$, they are adjacent in $G^\pm$ if and only if $e_1$ and $e_2$ intersect at one vertex, say $v$, in $\vec{H}$, and the sign of the edge $e_1e_2$ in $G^\pm$ is $\sigma(v, e_1)\sigma(v, e_2)$. One can then check that the reconstructed $G^\pm$ is indeed represented by $V$.
    
    Finally, it is routine to transfer properties back and forth between $G^\pm$ and $\vec{H}$.
\end{proof}

Now we are ready to strengthen the last case of \cref{thm:kpl}.

\begin{lemma} \label{lem:d-infinity}
    For every $p \ge 3$, every connected signed graph $G^\pm$ in $\D_\infty^\pm$ with $\chi(-G^\pm) \le p$ and $\la_1(G^\pm) = -2$ satisfies
    \[
        \frac{\abs{G^\pm}}{\mult(-2, G^\pm)} \ge \left(\frac{p}{p-1}\right)^2,
    \]
    and equality holds if and only if $G^\pm$ is the all-positive line graph of the complete bipartite graph $K_{p,p}$.
\end{lemma}

\begin{proof}
    According to \cref{lem:transfer}, it suffices to show that for every connected bidirected multigraph $\vec{H}$ with $n$ vertices and $m$ edges, if $\vec{H}$ has a proper $p$-edge-coloring, then
    \[
        \frac{m}{m - \rank V} \ge \left(\frac{p}{p-1}\right)^2 \text{ or equivalently }\frac{m}{\rank V} \le \frac{p^2}{2p-1}, \quad\text{where } V := \dset{\vec{e} \in D_n}{e \in E(\vec{H})},
    \]
    and equality holds if and only if $\vec{H}$ is the complete bipartite graph $K_{p,p}$ with uniform vertices.

    Suppose that $c\colon E(\vec{H}) \to \sset{c_1, \dots, c_p}$ is a proper $p$-edge-coloring of $\vec{H}$. One can deduce from \cref{def:proper-edge-coloring} the following properties.
    \begin{enumerate}[label=(\alph*)]
        \item For two edges $e_1$ and $e_2$ that intersect at a vertex $v$, if $\sigma(v, e_1) = \sigma(v, e_2)$, then $c(e_1) \neq c(e_2)$.\label{item:p-a}
        \item For two edges $e_1$ and $e_2$ that intersect at a vertex $v$, if $\sigma(v, e_1) \neq \sigma(v, e_2)$, then $c(e_1) = c(e_2)$, and any other edge incident to $v$ has to be parallel to either $e_1$ or $e_2$.\label{item:p-b}
    \end{enumerate}
    We break the rest of the proof on the upper bound of $m / \rank V$ into two cases.

    \bigskip
    \noindent\textit{Case 1:} $\vec{H}$ has no parallel edges. We claim that the maximum degree of $\vec{H}$ is at most $p$. Suppose on the contrary that $k$ edges $e_1, \dots, e_k$ pairwise intersect at a vertex $v$, where $k > p$. Because $k > p \ge 3$, the contrapositive of \ref{item:p-b} implies that $\sigma(v, e_i)$ is the same for $i \in \sset{1, \dots, k}$. Moreover \ref{item:p-a} says that $c(e_1), \dots, c(e_k)$ are distinct, which contradicts the assumption that $c$ is a proper $p$-edge-coloring. In particular, the claim implies that \begin{equation} \label{eqn:kpl-m}
        m/n \le p/2.
    \end{equation}
    
    Since $\vec{H}$ is connected, one can check that the set of vectors $\dset{\vec{e} \in D_n}{e \in E(T)}$, where $T$ is a spanning tree of $\vec{H}$, is linearly independent. Therefore $\rank V \in \sset{n - 1, n}$. We may assume that \[
        4 \le n \le 2p \quad\text{and}\quad \rank V = n-1.
    \]
    Indeed, in the case where $n \le 3$, we have
    \[
        \frac{m}{\rank V} \le \frac{m}{n-1} \le \frac{\binom{n}{2}}{n-1} = \frac{n}{2} \le \frac{3}{2} < \frac{9}{5} \le \frac{p^2}{2p-1},
    \]
    in the case where $n > 2p$, we have
    \[
        \frac{m}{\rank V} \le \frac{m}{n-1} \stackrel{\eqref{eqn:kpl-m}}{\le} \frac{p/2}{1-1/n} < \frac{p/2}{1-1/(2p)} = \frac{p^2}{2p-1}.
    \]
    and in the case where $\rank V = n$, we also have
    \[
        \frac{m}{\rank V} = \frac{m}{n} \stackrel{\eqref{eqn:kpl-m}}{\le} \frac{p}{2} < \frac{p^2}{2p-1},
    \]
    
    Let $U$ be the vertex set of $\vec{H}$, and define the vertex subset
    \[
        U_0 := \dset{u \in U}{u \text{ is \emph{not} uniform}}.
    \]
    We claim that $\vec{H}[U \setminus U_0]$ is bipartite. To specify its bipartition, let $\bx\colon U \to \R$ be a nonzero vector in the orthogonal complement of $V$. For every edge $e\in E(\vec{H})$ with endpoints $u_1$ and $u_2$, since $\bx$ is orthogonal to the vector $\vec{e}$ associated to $e$, we know that
    \begin{equation} \label{eqn:sigma-c}
        \sigma(u_1, e)\bx(u_1) + \sigma(u_2, e)\bx(u_2) = 0,
    \end{equation}
    hence $\abs{\bx(u_1)} = \abs{\bx(u_2)}$. Because $\vec{H}$ is connected, we deduce that $\abs{\bx(u)}$ is constant for every $u \in U$, and in particular, $\bx(u) \neq 0$ for every $u \in U$. Since every vertex in $U \setminus U_0$ is uniform, we partition $U \setminus U_0$ as follows:
    \begin{align*}
        U_1 & := \dset{u \in U \setminus U_0}{\sigma(u, e)\bx(u) > 0 \text{ for every edge }e \text{ incident to }u}, \\
        U_2 & := \dset{u \in U \setminus U_0}{\sigma(u, e)\bx(u) < 0 \text{ for every edge }e \text{ incident to }u}.
    \end{align*}
    In view of \eqref{eqn:sigma-c}, we deduce that $\vec{H}[U \setminus U_0]$ is a bipartite graph with parts $U_1$ and $U_2$, and so the number of edges in $\vec{H}[U \setminus U_0]$ is at most $\abs{U_1}\abs{U_2}$. According to \ref{item:p-b}, every vertex in $U_0$ has degree at most $2$. Thus, we can estimate $m$ using $n_0 := \abs{U_0}$ as follows:
    \[
        m \le \abs{U_1}\abs{U_2} + 2\abs{U_0} \le \left(\frac{n - n_0}{2}\right)^2 + 2n_0 \le \max\left(\frac{n^2}{4}, 2n-3\right),
    \]
    where the last inequality assumes that $0 \le n_0 \le n-2$. In the corner case where $n_0 \ge n-1$, we can estimate $m$ by
    \[
        2m \le (n-1)(\abs{U_1} + \abs{U_2}) + 2\abs{U_0} = (n-1)(n - n_0) + 2n_0 \le 3(n-1),
    \]
    which implies $m \le 3(n-1)/2 < 2n-3$. Therefore, we always have
    \[
        \frac{m}{\rank V} = \frac{m}{n-1} \le \max\left(\frac{n^2}{4(n-1)}, \frac{2n-3}{n-1}\right) = \begin{cases}
            \frac{2n-3}{n-1} & \text{if }n < 6, \\
            \frac{n^2}{4(n-1)} & \text{if }n \ge 6,
        \end{cases}
    \]
    which, as a function of $n$, increases on $(1,\infty)$. As $n \le 2p$ and $p \ge 3$, the function reaches its maximum $p^2/(2p-1)$ at $n = 2p$, and so $m/\rank V \le p^2/(2p-1)$. It is easy to see that equality holds if and only if $\vec{H}$ is the complete bipartite graph $K_{p,p}$ with uniform vertices.

    \bigskip
    \noindent\textit{Case 2:} $\vec{H}$ has parallel edges. Let $e_1$ and $e_2$ be a pair of parallel edges. Since $\vec{H}$ is connected, there is a spanning tree $T$ that contains $e_1$. One can check that the set of vectors $\dset{\vec{e} \in D_n}{e \in E(T) \cup \sset{e_2}}$ is linearly independent. Therefore
    \[
        \rank V = n.
    \]

    We next use the discharging method to bound the average degree $2m / n$ of $\vec{H}$. Each vertex $v$ starts with the charge $d(v)$, that is, the degree of $v$. We claim that for every vertex $v$ with $d(v) > p$, exactly one of the following two situations happens.
    \begin{enumerate}[label=(\Roman*)]
        \item The vertex $v$ is uniform, that is, the sign $\sigma(v, e)$ is the same for every edge $e$ incident to $v$. \label{item:p-1}
        \item The edges incident to $v$ are precisely two pairs $(e_1, e_1')$ and $(e_2, e_2')$ of parallel edges satisfying $\sigma(v, e_1) \neq \sigma(v, e_2)$, and in particular $d(v) = 4$ and $p = 3$. \label{item:p-2}
    \end{enumerate}
    Indeed, suppose that $d(v) > p$, and suppose that situation \ref{item:p-1} does not happen, that is, there exist two edges $e_1$ and $e_2$ incident to $v$ such that $\sigma(v, e_1) \neq \sigma(v, e_2)$. Because $d(v) > p \ge 3$, we may further assume that $e_1$ and $e_2$ intersect at $v$ by replacing $e_1$ or $e_2$ with another edge incident to $v$ in case $e_1$ and $e_2$ are parallel. Because $d(v) \ge 4$, according to \ref{item:p-b}, situation \ref{item:p-2} must happen.

    To describe the discharging rules, we say that two edges $e_1$ and $e_2$ are \emph{twin} if they are parallel and $c(e_1) = c(e_2)$, and for every vertex $v$ with $d(v) > p$, we call $v$ a \emph{type I} vertex when \ref{item:p-1} happens, and a \emph{type II} vertex when \ref{item:p-2} happens.
    
    \begin{itemize}
        \item The first discharging rule sends $1$ from every type I vertex $v$ to each of its neighbors $v'$ that are connected to $v$ through twin edges.
        \item The second discharging rule sends $1/2$ from every type II vertex $v$ to each of its two neighbors.
    \end{itemize}

    One can further deduce from \cref{def:proper-edge-coloring} the following property of twin edges.
    \begin{enumerate}[label=(\alph*), start=3]
        \item For twin edges $e_1$ and $e_2$ incident to a vertex $v$, if $\sigma(v, e_1) \neq \sigma(v, e_2)$, then $e_1$ and $e_2$ are the only two edges incident to $v$.\label{item:p-c}
    \end{enumerate}
    
    We first consider the charge of each vertex after the first discharging rule is applied. Suppose that $v$ is a type I vertex, and $v'$ is an arbitrary vertex that is connected to $v$ through a pair of parallel edges $e_1$ and $e_2$. Since $\sigma(v, e_1) = \sigma(v, e_2)$, it must be the case that $\sigma(v', e_1) \neq \sigma(v', e_2)$, and so $v'$ is not a type I vertex. Thus $v$ does not receive any charge from its neighbors. According to \ref{item:p-a}, among all the edges incident to $v$, only the parallel ones can receive identical colors. Thus after $v$ sends out charges to its neighbors, its charge decreases exactly to the number of distinct colors assigned to the edges incident to $v$, and so the charge of $v$ is at most $p$ now. Suppose in addition that $e_1$ and $e_2$ are twin edges, or in other words, $v'$ receives $1$ from $v$. According to \ref{item:p-c}, the only edges incident to $v'$ are $e_1$ and $e_2$, and so the charge of $v'$ is at most $3$, which is at most $p$, now.

    We then consider the charge of each vertex after the second discharging rule is applied. Notice that the second discharging rule only kicks in when $p = 3$. Suppose that $v$ is a type II vertex, and two pairs $(e_1, e_1')$ and $(e_2, e_2')$ of parallel edges connect $v$ respectively to $v_1$ and $v_2$ such that $\sigma(v, e_1) \neq \sigma(v, e_2)$. Without loss of generality, we may assume that $\sigma(v, e_1) = +1$ and $\sigma(v, e_2) = -1$. We have the four possibilities for $\sigma(v, e_1')$ and $\sigma(v, e_2')$. Once $\sigma(v, e_1')$ and $\sigma(v, e_2')$ are chosen, we can determine, according to \ref{item:p-a} and \ref{item:p-b}, which edges incident to $v$ receive identical or distinct colors. We summarize the four possibilities in the following figure. For each possibility, it is easy to check that $v$ did not receive any charge from $v_1$ or $v_2$ when the first discharging rule was applied. Therefore the final charge of $v$ is $3$, which is equal to $p$.

    \begin{figure}[ht]
        \centering
        \begin{tikzpicture}[
            very thick, baseline=(v.base),
            neg/.pic = {\draw[fill=white] (0,0) circle (0.1); \draw (-0.1,0)--(0.1,0);},
            pos/.pic = {\draw[fill=white] (0,0) circle (0.1); \draw (-0.1,0)--(0.1,0); \draw (0,0.1)--(0,-0.1);},
        ]
            \node[vertex] (a) at (-2,0) {};
            \node[vertex] (b) at (0,0) {};
            \node[vertex] (c) at (2,0) {};
            \draw (a) node[left] {$v_1$};
            \draw (0,-.1) node[below] {$v$};
            \draw (c) node[right] {$v_2$};
            \path[color=blue] (a) edge[bend left=45] (b);
            \path[color=blue] (b) edge[bend left=45] (c);
            \path[color=blue] (a) edge[bend left=-45] (b);
            \path[color=red] (b) edge[bend left=-45] (c);
            \draw (-0.4,0.3) pic {pos};
            \draw (0.4,0.3) pic {neg};
            \draw (-0.4,-0.3) pic {pos};
            \draw (0.4,-0.3) pic {pos};
        \end{tikzpicture}\quad%
        \begin{tikzpicture}[
            very thick, baseline=(v.base),
            neg/.pic = {\draw[fill=white] (0,0) circle (0.1); \draw (-0.1,0)--(0.1,0);},
            pos/.pic = {\draw[fill=white] (0,0) circle (0.1); \draw (-0.1,0)--(0.1,0); \draw (0,0.1)--(0,-0.1);},
        ]
            \node[vertex] (a) at (-2,0) {};
            \node[vertex] (b) at (0,0) {};
            \node[vertex] (c) at (2,0) {};
            \draw (a) node[left] {$v_1$};
            \draw (0,-.1) node[below] {$v$};
            \draw (c) node[right] {$v_2$};
            \path[color=blue] (a) edge[bend left=45] (b);
            \path[color=blue] (b) edge[bend left=45] (c);
            \path[color=red] (a) edge[bend left=-45] (b);
            \path[color=blue] (b) edge[bend left=-45] (c);
            \draw (-0.4,0.3) pic {pos};
            \draw (0.4,0.3) pic {neg};
            \draw (-0.4,-0.3) pic {neg};
            \draw (0.4,-0.3) pic {neg};
        \end{tikzpicture}\quad%
        \begin{tikzpicture}[
            very thick, baseline=(v.base),
            neg/.pic = {\draw[fill=white] (0,0) circle (0.1); \draw (-0.1,0)--(0.1,0);},
            pos/.pic = {\draw[fill=white] (0,0) circle (0.1); \draw (-0.1,0)--(0.1,0); \draw (0,0.1)--(0,-0.1);},
        ]
            \node[vertex] (a) at (-2,0) {};
            \node[vertex] (b) at (0,0) {};
            \node[vertex] (c) at (2,0) {};
            \draw (a) node[left] {$v_1$};
            \draw (0,-.1) node[below] {$v$};
            \draw (c) node[right] {$v_2$};
            \path[color=blue] (a) edge[bend left=45] (b);
            \path[color=blue] (b) edge[bend left=45] (c);
            \path[color=blue] (a) edge[bend left=-45] (b);
            \path[color=blue] (b) edge[bend left=-45] (c);
            \draw (-0.4,0.3) pic {pos};
            \draw (0.4,0.3) pic {neg};
            \draw (-0.4,-0.3) pic {pos};
            \draw (0.4,-0.3) pic {neg};
        \end{tikzpicture}\quad%
        \begin{tikzpicture}[
            very thick, baseline=(v.base),
            neg/.pic = {\draw[fill=white] (0,0) circle (0.1); \draw (-0.1,0)--(0.1,0);},
            pos/.pic = {\draw[fill=white] (0,0) circle (0.1); \draw (-0.1,0)--(0.1,0); \draw (0,0.1)--(0,-0.1);},
        ]
            \node[vertex] (a) at (-2,0) {};
            \node[vertex] (b) at (0,0) {};
            \node[vertex] (c) at (2,0) {};
            \draw (a) node[left] {$v_1$};
            \draw (0,-.1) node[below] {$v$};
            \draw (c) node[right] {$v_2$};
            \path[color=blue] (a) edge[bend left=45] (b);
            \path[color=blue] (b) edge[bend left=45] (c);
            \path[color=red] (a) edge[bend left=-45] (b);
            \path[color=red] (b) edge[bend left=-45] (c);
            \draw (-0.4,0.3) pic {pos};
            \draw (0.4,0.3) pic {neg};
            \draw (-0.4,-0.3) pic {neg};
            \draw (0.4,-0.3) pic {pos};
        \end{tikzpicture}
    \end{figure}
    
    We are left to decide the final charge of $v_1$ and $v_2$. Observe that for $i \in \sset{1,2}$ there are, \emph{au fond}, only three possible ways, as shown below, to color $e_i$ and $e_i'$ and to assign $\sigma(v_i, e_i)$ and $\sigma(v_i, e_i')$. For the first possibility, according to \ref{item:p-c}, the only edges incident to $v_i$ are $e_i$ and $e_i'$, and so the final charge of $v_i$ is $5/2$, which is less than $p$. For the other two possibilities, the contrapositive of \ref{item:p-b} implies that $v_i$ must be uniform, and so its charge was at most $p$ after the first discharging rule was applied. Assume for the sake of contradiction that $v_i$ is connected to another type II vertex $w$ through edges $f_i$ and $f_i'$. Since $v_i$ is uniform, in view of the three possibilities above, we know that $c(e_i) \neq c(e_i')$ and $c(f_i) \neq c(f_i')$. In view of \ref{item:p-a}, we know that $\sset{c(e_i), c(e_i')} \cap \sset{c(f_i), c(f_i')} = \varnothing$. Thus the four edges $e_i$, $e_i'$, $f_i$ and $f_i'$ receive distinct colors under $c$, which contradicts the assumption that $p = 3$. Therefore $v$ is the only type II neighbor of $v_i$, and the final charge of $v_i$ is at most $p + 1/2$.

    \begin{figure}[ht]
        \centering
        \begin{tikzpicture}[
            very thick, baseline=(v.base),
            neg/.pic = {\draw[fill=white] (0,0) circle (0.1); \draw (-0.1,0)--(0.1,0);},
            pos/.pic = {\draw[fill=white] (0,0) circle (0.1); \draw (-0.1,0)--(0.1,0); \draw (0,0.1)--(0,-0.1);},
        ]
            \node[vertex] (a) at (-2,0) {};
            \node[vertex] (b) at (0,0) {};
            \draw (a) node[left] {$v$};
            \draw (b) node[right] {$v_i$};
            \path[color=blue] (a) edge[bend left=45] (b);
            \path[color=blue] (a) edge[bend left=-45] (b);
            \draw (-0.4,0.3) pic {pos};
            \draw (-0.4,-0.3) pic {neg};
        \end{tikzpicture}\quad%
        \begin{tikzpicture}[
            very thick, baseline=(v.base),
            neg/.pic = {\draw[fill=white] (0,0) circle (0.1); \draw (-0.1,0)--(0.1,0);},
            pos/.pic = {\draw[fill=white] (0,0) circle (0.1); \draw (-0.1,0)--(0.1,0); \draw (0,0.1)--(0,-0.1);},
        ]
            \node[vertex] (a) at (-2,0) {};
            \node[vertex] (b) at (0,0) {};
            \draw (a) node[left] {$v$};
            \draw (b) node[right] {$v_i$};
            \path[color=blue] (a) edge[bend left=45] (b);
            \path[color=red] (a) edge[bend left=-45] (b);
            \draw (-0.4,0.3) pic {pos};
            \draw (-0.4,-0.3) pic {pos};
        \end{tikzpicture}\quad%
        \begin{tikzpicture}[
            very thick, baseline=(v.base),
            neg/.pic = {\draw[fill=white] (0,0) circle (0.1); \draw (-0.1,0)--(0.1,0);},
            pos/.pic = {\draw[fill=white] (0,0) circle (0.1); \draw (-0.1,0)--(0.1,0); \draw (0,0.1)--(0,-0.1);},
        ]
            \node[vertex] (a) at (-2,0) {};
            \node[vertex] (b) at (0,0) {};
            \draw (a) node[left] {$v$};
            \draw (b) node[right] {$v_i$};
            \path[color=blue] (a) edge[bend left=45] (b);
            \path[color=red] (a) edge[bend left=-45] (b);
            \draw (-0.4,0.3) pic {neg};
            \draw (-0.4,-0.3) pic {neg};
        \end{tikzpicture}
    \end{figure}

    Since the discharging rules preserve the total charge, the average degree $2m/n$ is at most the maximum final charge, which is at most $p + 1/2$. Finally, we obtain that
    \begin{equation*}
        \frac{m}{\rank V} = \frac{m}{n} \le \frac{p+1/2}{2} < \frac{p^2}{2p-1}. \qedhere
    \end{equation*}
\end{proof}

\begin{proof}[Proof of \cref{thm:kpl} for $\la = 2$] \label{proof:la-2}
    From \cref{thm:cgss-signed} and \cref{lem:d-infinity}, we know that $k_p(2)$ is the smaller of the two quantities $p^2/(p-1)^2$ and
    \[
        k_p^* := \inf\dset{\frac{\abs{G^\pm}}{\mult(-2, G^\pm)}}{G^\pm \text{ is represented by a subset of }E_8 \text{ and } \chi(-G^\pm) \le p}.
    \]
    Because there are only finitely many signed graphs that are represented by a subset of $E_8$, the infimum in the definition of $k_p^*$ is in fact a minimum, hence $k_p(2)$ is achievable. Moreover $k_p^* \ge k^*$, where
    \begin{equation} \label{eqn:k-star}
        k^* := \min\dset{\frac{\abs{G^\pm}}{\mult(-2, G^\pm)}}{G^\pm \text{ is represented by a subset of }E_8}.
    \end{equation}
    Since $k^* > 1$, there exists $p_0 \ge 3$ such that $p^2 / (p-1)^2 \le k^* \le k_p^*$ for every $p \ge p_0$.
\end{proof}

\section{Concluding remarks} \label{sec:discussion}

Extending Smith's classification \cite{S70} of connected graphs with spectral radius at most $2$, Cvetkovi\'c, Doob and Gutman~\cite[Theorem~3.8]{CDG82} characterized the connected graphs with spectral radius in $(2, \la')$, which were later explicitly classified by Brouwer and Neumaier~\cite{BN89}. Recall that $\la' = \sqrt{2 + \sqrt5} \approx 2.05817$. Retrospectively, perhaps both \cref{thm:forb1} and the fact that $\la'$ is the smallest $\la \in \R$ such that the set of graph spectral radii is dense in $(\la, \infty)$ (cf. \cite{H72,S89}) indicated that it is possible to classify connected graphs with spectral radius at most $\la'$.

A similar line of research was carried out for signed graphs. McKee and Smyth~\cite{MS07} determined all the connected signed graphs with spectral radius at most $2$ --- they are switching equivalent to subgraphs of $S_{14}, S_{16}$ and $T_{2n}$ for $n \ge 3$ in \cref{fig:ms}. When investigating Lehmer's Mahler measure problem, McKee and Smyth~\cite[Theorem~3]{MS12} further determined the $17$ connected signed graphs with spectral radius in $(2, 2.019)$. Belardo, Cioab\u{a}, Koolen and Wang~\cite[Problem~3.11]{BCKW18} raised the question on the classification of connected signed graphs with spectral radius at most $\la'$. This question was very recently resolved by 
Wang, Dong, Hou and Li~\cite{WDHL23}.

\begin{figure}
    \centering
    \begin{tikzpicture}[very thick, scale=1.6, baseline=(v.base)]
        \coordinate (v) at (0,0);
        \foreach \r in {38.57,90,141.43,192.86,244.29,295.71,347.14} {
            \draw[rotate=\r, black] (0:1) -- (154.29:1);
            \draw[rotate=\r, black] (0:1) -- (43.5:0.628);
            \draw[rotate=\r, black, dashed] (102.86:1) -- (43.5:0.628);
        }
        \foreach \r in {38.57,90,141.43,192.86,244.29,295.71,347.14} {
            \draw[rotate=\r] (0:1) node[vertex]{};
            \draw[rotate=\r] (43.5:0.628) node[vertex]{};
        }
        \node at (0,-1.3) {$S_{14}$};
    \end{tikzpicture}\qquad%
    \begin{tikzpicture}[very thick, scale=1.2, baseline=(u.base)]
        \coordinate (u) at (0,1.25);
        \coordinate (v) at (2,1);
  
        \draw[dashed] (0,0) -- ++(v);
        \draw (0,1) -- ++(v);
        \draw (1,0) -- ++(v);
        \draw (1,1) -- ++(v);
        \draw (0.5,0.5) -- ++(v);
        \draw (1.5,0.5) -- ++(v);
        \draw (0.5,1.5) -- ++(v);
        \draw[dashed] (1.5,1.5) -- ++(v);
  
        \draw (0,0) -- (1,0) -- (1,1);
        \draw (0,1) -- (0.5,1.5);
        \draw (1.5,0.5) -- (1.5,1.5);
        \draw (0,0) -- (0,1);
        \draw[dashed] (0,1) -- (1,1);
        \draw[dashed] (1,0) -- (1.5,0.5);
        \draw (1.5,0.5) -- (0.5,0.5) -- (0,0);
        \draw[dashed] (0.5,0.5) -- (0.5,1.5);
        \draw (0.5,1.5) -- (1.5,1.5) -- (1,1);
        \draw (0,0) node[vertex]{};
        \draw (1,0) node[vertex]{};
        \draw (0,1) node[vertex]{};
        \draw (1,1) node[vertex]{};
        \draw (0.5,0.5) node[vertex]{};
        \draw (1.5,0.5) node[vertex]{};
        \draw (0.5,1.5) node[vertex]{};
        \draw (1.5,1.5) node[vertex]{};
  
        \begin{scope}[shift={(v)}]
            \draw[dashed] (1,1) -- (1,0);
            \draw (1,0) -- (0,0) -- (0.5,0.5) -- (0.5,1.5) -- (1.5,1.5);
            \draw[dashed] (0.5,1.5) -- (0,1);
            \draw (0,0) -- (0,1) -- (1,1) -- (1.5,1.5) -- (1.5,0.5) -- (1,0);
            \draw[dashed] (1.5,0.5) -- (0.5,0.5);
            \draw (0,0) node[vertex]{};
            \draw (1,0) node[vertex]{};
            \draw (0,1) node[vertex]{};
            \draw (1,1) node[vertex]{};
            \draw (0.5,0.5) node[vertex]{};
            \draw (1.5,0.5) node[vertex]{};
            \draw (0.5,1.5) node[vertex]{};
            \draw (1.5,1.5) node[vertex]{};
        \end{scope}
        \node at (1.75,-.5) {$S_{16}$};
    \end{tikzpicture}\qquad%
    \begin{tikzpicture}[very thick, scale=1.6, baseline=(v.base)]
        \coordinate (v) at (0,0);
        \foreach \r in {30,60,90,120,150,180,210,240,270,300,330} {
            \draw[rotate=\r, black] (-15:1) -- (15:1);
            \draw[rotate=\r, black, dashed] (-15:0.75) -- (15:0.75);
            \draw[rotate=\r, black] (15:1) -- (-15:0.75);
            \draw[rotate=\r, black, dashed] (-15:1) -- (15:0.75);
            \draw[rotate=\r] (-15:1) node[vertex]{};
            \draw[rotate=\r] (-15:0.75) node[vertex]{};
        }
        \draw (-15:1) node[vertex]{};
        \draw (-15:0.75) node[vertex]{};
        \node at (.875,.06) {$\vdots$};
        \node at (0,-1.3) {$T_{2n}$ ($n \ge 3$)};
    \end{tikzpicture}
    \caption{Maximal connected signed graphs with spectral radius at most $2$ up to switching equivalence. The number of vertices in $T_{2n}$ is $2n$.} \label{fig:ms}
\end{figure}

With regard to smallest eigenvalues, we would like to extend \cref{thm:cgss} of Cameron et al.~\cite{CGSS76} beyond $\G(2)$.

\begin{problem} \label{prob:first}
    Classify all the connected graphs with smallest eigenvalue in $(-\la^*, -2)$. In particular, classify such graphs that have sufficiently many vertices.
\end{problem}

It is worth mentioning that Bussemaker and Neumaier~\cite[Theorem~2.5]{BN92} showed that $E_{2,6}$ defined in \cref{lem:e2n} is the only connected graph with smallest eigenvalue in $[-\la^1(E_{2,6}), -2)$, where $\la^1(E_{2,6}) \approx 2.00659$. Very recently, Acharya and Jiang \cite{AJ24} provided a complete solution to \cref{prob:first} --- there are 794 infinite families of graphs and 4752 exceptional graphs.

We can ask the same question for signed graphs, extending \cref{thm:cgss-signed} beyond $\G^\pm(2)$.

\begin{problem}
    Classify all the connected signed graphs with smallest eigenvalue in $(-\la^*, -2)$. In particular, classify such signed graphs that have sufficiently many vertices.
\end{problem}

Turning to spherical two-distance sets with two fixed angles, it is plausible to establish more instances of \cref{conj:main} by taking advantage of $\chi(G^\pm)$ in \cref{def:mph}. Denote by $\G^\mp_p(\la)$ the family of signed graphs $G^\pm$ with $\chi(G^\pm) \le p$ and $\la^1(G^\pm) \le \la$. Observe that $\G^\mp_p(\la)$, just like $\Gsp$, is still closed under taking subgraphs. We raise the following question, whose solution could possibly establish more instances of \cref{conj:main}.

\begin{problem} \label{prob:d}
    For every $p \in \N^+$, determine the set of $\la \in \R$ for which $\G^\mp_p(\la)$ has a finite forbidden subgraph characterization.
\end{problem}

Notice that $\G^\mp_1(\la)$ consists of unsigned graphs only, and $\G^\mp_2(\la)$ consists of signed graphs that are switching equivalent to unsigned graphs. Thus \cref{thm:forb1} essentially answers \cref{prob:d} when $p \in \sset{1,2}$.

Finally, we point out that the proof of \cref{thm:kpl} for $\la = 2$ actually shows that $k_p(2) = p^2/(p-1)^2$ for every $p \ge 1/(1-1/\sqrt{k^*})$, where $k^*$ is defined as in \eqref{eqn:k-star}. We compute $k^* = 15/14$ hence $1/(1-1/\sqrt{k^*}) \approx 29.49$. Indeed, Stani\'{c} noted in \cite{S20} that, up to switching equivalence, there is a unique maximal signed graph, denoted $M_8^\pm$, that is represented by a subset of $E_8$, and moreover, the spectrum of $M_8^\pm$ is $\sset{28^8, (-2)^{112}}$. By the Cauchy interlacing theorem, every signed graph $G^\pm$ that is represented by a subset of $E_8$ satisfies $\la^1(G^\pm) \le 28$, and thus
\[
    0 = \sum_i \la_i(G^\pm) \le (-2)\mult(-2,G^\pm) + 28(\abs{G^\pm} - \mult(-2,G^\pm)),
\]
which implies that $\abs{G^\pm}/\mult(-2,G^\pm) \ge 15/14$. Since equality holds when $G^\pm = M_8^\pm$, we obtain $k^* = 15/14$. We leave the determination of $k_p(2)$ for small $p$ as an open problem.

\begin{problem}
    For every $p \in \sset{3,4,\dots,29}$, determine the value of $k_p(2)$.
\end{problem}

\appendix

\section{Computer-assisted proofs} \label{sec:numerics}

Notice that the path extensions $(G_4, \sset{3^\pm}, \ell)$ and $(G_4, \sset{4^\pm}, \ell)$ are switching equivalent to $E_{2, \ell+3}$ (see \cref{fig:e2n}), and so \cref{lem:e2n} implies that equality holds for $F = G_4$ and $A \in \sset{\sset{3},\sset{4}}$ in \cref{lem:computation-1}, and for $F^\pm = G_4$ and $A \in \sset{\sset{3^\pm},\sset{4^\pm}}$ in \cref{lem:computation-2}.

The termination of a program that solves the following computational problem is a proof of the strict inequalities in \cref{lem:computation-1,lem:computation-2}. In fact, we strengthen these strict inequalities by replacing $\la^* \approx 2.0198$ with $101/50 = 2.02$. Note that $-101/50$ is not an algebraic integer, and hence it cannot be an eigenvalue of any signed graph.

\bigskip
\noindent\textit{Input.} The first line of the input gives the number \texttt{N} of minimal forbidden subgraphs for $\D_\infty^\pm$ (up to switching equivalence). Each of the \texttt{N} lines that follow represents a signed graph in \cref{fig:minimal-forb-1,fig:minimal-forb-2} by two strings. The first string is the label of the signed graph. The second string is of the form \texttt{u[1]u[2]\ldots u[2e-1]u[2e]} possibly followed by \texttt{-v[1]v[2]\ldots v[2f-1]v[2f]}, which lists the positive edges \texttt{u[1]u[2],\ldots ,u[2e-1]u[2e]} and the negative edges \texttt{v[1]v[2],\ldots ,v[2f-1]v[2f]}.

\begin{table}[t]
    {\ttfamily
    \begin{tabularx}{\textwidth}{XX}
        49 & G1 0312142334 \\
        G2 020412142334 & G3 02030412142334 \\
        G4 0102052345 & G5 0102030534 \\
        G6 0102030405 & G7 011215233445 \\
        G8 020304051234 & G9 010203040534 \\
        G10 01020512152345 & G11 01020305121545 \\
        G12 01020304052345 & G13 0102051215233445 \\
        G14 0312131415233445 & G15 0205121523253435 \\
        G16 0102030405233445 & G17 0205121523253545 \\
        G18 020304051215232534 & G19 010203051215232534 \\
        G20 010203040512152345 & G21 010203040512152325 \\
        G22 020512152324253545 & G23 02121314152324343545 \\
        G24 01020304051215233445 & G25 01030412131415233445 \\
        G26 01020304051213141534 & G27 0102030412131415233445 \\
        G28 0102030405121314152345 & G29 010203040512131415233445 \\
        G30 010203040512152324253545 & G31 01020304051215232425343545 \\
        S32 011213-23 & S33 02031213-23 \\
        S34 02031213-0123 & S35 0102031213-23 \\
        S36 0112152345-34 & S37 0304121523-01 \\
        S38 030412152345-10 & S39 030412152334-01 \\
        S40 010203052345-34 & S41 03041215233445-01 \\
        S42 01020512152345-34 & S43 01020405121523-34 \\
        S44 02030515232545-34 & S45 0203121415232545-34 \\
        S46 0203051215232545-34 & S47 010203051214152345-34 \\
        S48 010203051215232545-34 & S49 01020305121415232545-34 \\
    \end{tabularx}}
    \caption{Input.} \label{table:input}
\end{table}

\bigskip
\noindent\textit{Output.} For each of the \texttt{N} signed graphs $F^\pm$, output one line containing \texttt{x y z}, where \texttt{x} is the label of $F^\pm$, \texttt{y} and \texttt{z} are both \texttt{-1} if $\la_1(F^\pm) < -101/50$ already, otherwise \texttt{y} is, in all but one case, the minimum $\ell$ such that $\la_1(F^\pm, A^\pm, \ell) < -101/50$ for every nonempty signed vertex subset $A^\pm$ of $F^\pm$, and \texttt{z} is the minimum $m$ such that $\la_1(F^\pm, A^\pm, K_m) < -101/50$ for every nonempty signed vertex subset $A^\pm$ of $F^\pm$. In the exceptional case where the label of $F^\pm$ is \texttt{G4}, \texttt{y} is defined similarly but $A^\pm$ cannot be $\sset{3^\pm}$ or $\sset{4^\pm}$, and an extra $\texttt{*}$ is appended to \texttt{y}.

\begin{table}[t]
    {\ttfamily
    \begin{tabularx}{\textwidth}{XXXXXX}
        G1 -1 -1& 
        G2 -1 -1&
        G3 -1 -1&
        G4 8* 7&
        G5 -1 -1&
        G6 -1 -1\\
        G7 6 5&
        G8 11 7&
        G9 -1 -1&
        G10 7 6&
        G11 7 6&
        G12 6 6\\
        G13 4 4&
        G14 5 5&
        G15 -1 -1&
        G16 5 5&
        G17 -1 -1&
        G18 4 5\\
        G19 6 5&
        G20 4 5&
        G21 5 5&
        G22 -1 -1&
        G23 6 5&
        G24 4 4\\
        G25 4 4&
        G26 -1 -1&
        G27 4 4&
        G28 4 5&
        G29 4 4&
        G30 -1 -1\\
        G31 4 4&
        S32 -1 -1&
        S33 -1 -1&
        S34 -1 -1&
        S35 -1 -1&
        S36 -1 -1\\
        S37 6 5&
        S38 4 4&
        S39 6 5&
        S40 5 5&
        S41 4 4&
        S42 -1 -1\\
        S43 4 5&
        S44 5 5&
        S45 5 5&
        S46 4 4&
        S47 4 4&
        S48 4 4\\
        S49 4 4
    \end{tabularx}}
    \caption{Output.} \label{table:output}
\end{table}

\bigskip

Our implementation is straightforward. We iterate through the minimal forbidden subgraphs $F^\pm$ for $\D_\infty^\pm$. In each iteration, we compute \texttt{y} and \texttt{z} as follows. We first check whether $A_{F^\pm} + (101/50)I$ is positive definite --- if not we output \texttt{-1 -1} for \texttt{y z} and continue to the next iteration. We test whether a matrix is positive definite by checking whether its leading principal minors are all positive. If the smallest eigenvalue of $F^\pm$ turns out to be more than $-101/50$, we then go through all possible nonempty signed vertex subsets $A^\pm$ of $F^\pm$. For each $A^\pm$, we increase $\ell$ from $0$ until the determinant of $A_{(F^\pm, A^\pm, \ell)} + (101/50)I$ is negative, and we increase $m$ from $1$ until the determinant of $A_{(F^\pm, A^\pm, K_m)} + (101/50)I$ is negative. We record the largest $\ell$ and $m$ when we go through $A^\pm$, and output them as \texttt{y} and \texttt{z}. In the exceptional case where the label of $F^\pm$ is \texttt{G4}, we skip the computation of $\ell$ when $A^\pm$ is $\sset{3^\pm}$ or $\sset{4^\pm}$. We avoid floating-point errors by representing every number as a rational number.

The actual code, written in Ruby, is available as the ancillary file \texttt{maximum\_extensions.rb} in the arXiv version of this paper. As our code halts, we obtain a proof of \cref{lem:computation-1,lem:computation-2}. We provide the input in \cref{table:input} for the convenience of anyone who wants to program independently, and we provide in addition the output in \cref{table:output} for cross-check.

\section*{Acknowledgements}

We thank Sebastian Cioab\u{a} for sending us a copy of \cite{H77a}, and Jing Zhang for pointing us to \cite{D13}. We are very grateful to Zhuang Xiong and two anonymous referees for their valuable feedback on the earlier versions of the paper. One referee noted a gap in the proof of a byproduct result of \cref{thm:main2} on symmetric hollow integer matrices in an earlier version; that result was removed from the present manuscript and later proved in \cite{J25}.

\bibliographystyle{plain}
\bibliography{forbidden}

\end{document}